\documentclass[english,12pt]{article}

\usepackage[utf8]{inputenc}
\usepackage[T1]{fontenc}
\usepackage[a4paper,margin=2.5cm]{geometry}
\usepackage{amssymb,amsthm,mathtools,mathrsfs}
\usepackage{graphicx}
\usepackage{babel}
\usepackage[colorlinks]{hyperref}
\usepackage{microtype}
\usepackage{caption}
\usepackage{subcaption}
\usepackage{enumitem}

\usepackage{tikz}
\usetikzlibrary{cd}
\tikzcdset{scale cd/.style={every label/.append style={scale=#1}, cells={nodes={scale=#1}}}}

\usepackage{bm}
\usepackage{siunitx}
\usepackage[capitalize,nameinlink]{cleveref}
\numberwithin{equation}{section}
\raggedbottom{}
\allowdisplaybreaks{}

\usepackage[backend=biber,style=numeric,sorting=nyt]{biblatex}
\addbibresource{references.bib}
\usepackage{csquotes}

\definecolor{Blue}{HTML}{1F77B4}
\definecolor{Orange}{HTML}{FF7F0E}
\definecolor{Green}{HTML}{2CA02C}
\definecolor{Red}{HTML}{D62728}
\definecolor{Grey}{HTML}{7F7F7F}

\hypersetup{colorlinks = True,
	linkcolor  = Red,
	citecolor  = Green,
	urlcolor   = Blue,
}

\NewDocumentCommand{\bbC}{}{\mathbb{C}}

\NewDocumentCommand{\bbL}{}{\mathbb{L}}
\NewDocumentCommand{\bbN}{}{\mathbb{N}}
\NewDocumentCommand{\bbR}{}{\mathbb{R}}

\NewDocumentCommand{\calK}{}{\mathcal{K}}
\NewDocumentCommand{\calL}{}{\mathcal{L}}
\NewDocumentCommand{\calN}{}{\mathcal{N}}
\NewDocumentCommand{\calP}{}{\mathcal{P}}

\NewDocumentCommand{\calX}{}{\mathcal{X}}
\NewDocumentCommand{\calY}{}{\mathcal{Y}}

\NewDocumentCommand{\scrE}{}{\mathscr{E}}

\NewDocumentCommand{\ex}{}{\mathsf{e}}
\NewDocumentCommand{\im}{}{\mathsf{i}\mkern1mu}

\NewDocumentCommand{\msH}{}{\mathcal{H}}

\NewDocumentCommand{\spC}{}{\mathscr{C}}
\NewDocumentCommand{\spL}{}{\mathrm{L}}
\NewDocumentCommand{\spM}{}{\mathcal{M}}

\NewDocumentCommand{\spPolyP}{o}{\IfNoValueTF{#1}{\mathbb{P}}{\mathbb{P}_{#1}}}
\NewDocumentCommand{\spPolyHom}{m}{\mathbb{H}_{#1}}
\NewDocumentCommand{\spPolyQ}{m}{\mathbb{Q}_{#1}}

\DeclareMathOperator{\Id}{\mathsf{Id}}
\DeclareMathOperator{\D}{D}
\NewDocumentCommand{\di}{m}{\mathop{}\!\mathrm{d}#1}
\DeclareMathOperator{\OO}{\mathcal{O}}

\DeclareMathOperator{\Dim}{dim}

\DeclareMathOperator{\vect}{span}
\DeclareMathOperator{\Hom}{\mathcal{L}}
\DeclareMathOperator{\diag}{diag}
\DeclareMathOperator{\Lip}{Lip}
\DeclareMathOperator{\Img}{Im}
\DeclareMathOperator{\Ker}{Ker}
\DeclareMathOperator{\Supp}{supp}
\DeclareMathOperator{\Diam}{diam}
\DeclareMathOperator{\Span}{span}

\DeclarePairedDelimiter{\plr}\lparen\rparen
\DeclarePairedDelimiter{\clr}\lbrack\rbrack
\DeclarePairedDelimiter{\blr}\lbrace\rbrace

\DeclarePairedDelimiter{\abs}\lvert\rvert
\DeclarePairedDelimiter{\norm}\lVert\rVert

\DeclarePairedDelimiterX\setst[2]\lbrace\rbrace{#1\:\delimsize\vert\:\mathopen{}#2} \DeclarePairedDelimiterX\setwt[2]\lbrace\rbrace{#1\:{:}\:\mathopen{}#2}

\DeclarePairedDelimiterX\ioo[2]\lparen\rparen{#1,\:#2}
\DeclarePairedDelimiterX\ioc[2]\lparen\rbrack{#1,\:#2}
\DeclarePairedDelimiterX\ico[2]\lbrack\rparen{#1,\:#2}
\DeclarePairedDelimiterX\icc[2]\lbrack\rbrack{#1,\:#2}

\NewDocumentCommand\restr{s m m}{\IfBooleanTF{#1}{\left. #2 \right\vert_{#3}}{#2 \arrowvert_{#3}}}

\RenewDocumentCommand{\vec}{m}{\bm{#1}}
\NewDocumentCommand{\eps}{}{\varepsilon}

\NewDocumentCommand{\matI}{}{I}
\NewDocumentCommand{\trans}{}{\intercal}

\NewDocumentCommand{\IFS}{}{\mathscr{S}}
\DeclareMathOperator{\OpSet}{\mathscr{H}}
\DeclareMathOperator{\OpFct}{\mathscr{F}}
\DeclareMathOperator{\OpMsr}{\mathscr{M}}
\DeclareMathOperator{\Quad}{\mathcal{Q}}
\DeclareMathOperator{\Eval}{\mathcal{E}}
\DeclareMathOperator{\Lpoly}{\mathscr{L}}
\DeclareMathOperator{\Interp}{\mathcal{I}}

\theoremstyle{plain}
\newtheorem{definition}{Definition}[section]
\newtheorem{lemma}[definition]{Lemma}
\newtheorem{proposition}[definition]{Proposition}
\newtheorem{theorem}[definition]{Theorem}
\newtheorem{corollary}[definition]{Corollary}
\newtheorem{remark}[definition]{Remark}
\newtheorem{assumption}[definition]{Assumption}

\theoremstyle{remark}
\newtheorem{example}[definition]{Example}

\crefname{assumption}{Assumption}{Assumptions}
\Crefname{assumption}{Assumption}{Assumptions}

\title{High-order numerical integration on self-affine sets.}

\author{%
    Patrick Joly\thanks{POEMS, CNRS, Inria, ENSTA, Institut Polytechnique de Paris, 91120 Palaiseau, France}
	\and
	Maryna Kachanovska\footnotemark[1]
	\and
	Zo{\"\i}s Moitier\thanks{Inria, Unité de Mathématiques Appliquées, ENSTA, Institut Polytechnique de Paris, 91120 Palaiseau, France}
}

\begin{document}

\maketitle

\begin{abstract}
	We construct an interpolatory high-order cubature rule to compute integrals of smooth functions over self-affine sets with respect to an invariant measure.
	The main difficulty is the computation of the cubature weights, which we characterize algebraically, by exploiting a self-similarity property of the integral.
	We propose an \( h \)-version and a \( p \)-version of the cubature, present an error analysis and conduct numerical experiments.
\end{abstract}

\section{Introduction}\label{sec:intro}

A significant portion of research in modern numerical analysis is dedicated to numerical approximation of solutions to the problems posed on rough domains or of irregular functions, see~\cite{achdou06,banjai07,capitanelli10,lancia13,banjai21,cefalo21,dekkers221,dekkers222,roesler24} and references therein.
In particular, a series of recent articles~\cite{cw16,cw18,cw21,cw22,caetano2024,cw25} deals with approximation of wave scattering by fractal screens, a problem that has applications in antenna engineering.
The underlying numerical method, suggested in~\cite{caetano2024}, relies on approximating integrals on fractal screens. Very few works have so far addressed the question of constructing cubature on fractal sets, which is a goal of the present work.

It seems that one of the first works treating cubature on fractal sets is due to G.~Mantica~\cite{mantica96}, who constructs a Gaussian quadrature for 1D fractals based on orthogonal polynomials.
The chaos-game cubatures, based on Monte-Carlo method, were suggested in~\cite{Forte1998}.
In~\cite{gibbs2023}, the authors design and analyze a barycentric rule for integration of regular functions on fractals, and propose a clever method in~\cite{gibbs2024} to compute singular integrals of BEM\@.
Up to our knowledge, existing methods are either uni-dimensional, or of low order.

Our approach is inspired by the work of Strichartz~\cite{Str2000}, who suggests an elegant method to evaluate integrals of polynomials on fractals using self-similarity (see \cref{appendix:gen}).
Adapting this idea to our setting yields a cubature method of arbitrary high order for integrating functions on self-similar sets in \( \bbR^n \).

This article is organized as follows.
In \cref{sec:setting} we introduce the notions related to the geometry and measure of self-similar sets, as well as polynomial spaces.
\Cref{sec:cubature-S-inv} and \cref{sec:cubature-non-S-inv} describe the new method to construct cubature on self-affine sets.
In \cref{sec:error} we propose two versions of the cubature: an \( h \)-version and a \( p \)-version, and discuss the related error estimates.
Finally, \cref{sec:numeric} contains numerical experiments.

\section{Problem setting}\label{sec:setting}

\subsection{Geometry, measure and integration}

\subsubsection{Self-affine sets}

The exposition below follows~\cite[Ch.~9]{Fal2014}.
We will denote by  \( \abs{\: \cdot \:}_2 \) the Euclidean norm on \( \bbR^n \).

\begin{definition}\label{def:ifs}
	An \emph{Iterated Function System} (IFS) on \( \bbR^n \) is a finite family of contractive maps \( \IFS = \setwt{S_\ell \colon \bbR^n \to \bbR^n}{\ell= 1, 2, \ldots, L} \), with \( L \geq 2 \).
	Namely, for \( \ell \in \bbL \coloneqq \blr{1, 2, \ldots, L} \), there exists \( 0 \leq \rho_\ell < 1 \) such that the maps \( S_\ell \) satisfy the estimation \( \abs{S_\ell(x) - S_\ell(y)}_2 \leq \rho_\ell \, \abs{x - y}_2 \), for all \( x, y \in \bbR^n \).
	The fixed point of a contractive mapping \( S_\ell \) is  denoted by \( c_{\ell} \).
\end{definition}

\begin{definition}\label{def:hut-op}
	Let \( \calK \) denote the set of all nonempty compact sets of \( \bbR^n \).
	For an IFS \( \IFS = \setwt{S_\ell}{\ell \in \bbL} \), we define the \emph{Hutchinson operator} by
	\[
		\OpSet \colon \calK \to \calK
		\quad \text{such that} \quad
		\OpSet(E) \coloneqq S_1\plr{E} \cup \cdots \cup S_{L}\plr{E}.
	\]
\end{definition}

The above definitions are motivated by the following result from~\cite[Thm.~9.1]{Fal2014}.

\begin{theorem}\label{thm:attractor-existence}
	For an IFS \( \IFS = \setwt{S_\ell}{\ell \in \bbL} \), there exists a unique nonempty compact set \( \Gamma \in \calK \), which is a fixed point of \( \OpSet \), meaning that \( \Gamma = \OpSet\plr{\Gamma} \). This set is called the \emph{attractor} of the IFS \( \IFS \).
\end{theorem}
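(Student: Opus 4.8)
The plan is to prove \cref{thm:attractor-existence} via the contraction mapping principle applied to the Hutchinson operator $\OpSet$ acting on the metric space $\plr{\calK, d_H}$, where $d_H$ is the Hausdorff metric on nonempty compact subsets of $\bbR^n$. The strategy breaks into three parts: (i) recall that $\plr{\calK, d_H}$ is a complete metric space; (ii) show that $\OpSet$ is a contraction on this space; (iii) invoke Banach's fixed-point theorem to conclude existence and uniqueness of the fixed point $\Gamma$.

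First I would set up the Hausdorff metric: for $A, B \in \calK$, define $d_H(A, B) \coloneqq \max\blr{\sup_{a \in A} \dist(a, B),\ \sup_{b \in B} \dist(b, A)}$, where $\dist(x, E) = \inf_{y \in E} \abs{x - y}_2$. One then recalls the standard fact that $\plr{\calK, d_H}$ is complete (this can be cited from~\cite[Ch.~9]{Fal2014} or proved by a Cauchy-sequence argument identifying the limit as the set of limits of convergent sequences $a_k \in A_k$). Next, the key computational step: I would first verify that for a single contraction $S_\ell$ with ratio $\rho_\ell$, one has $d_H\plr{S_\ell(A), S_\ell(B)} \leq \rho_\ell\, d_H(A, B)$, which follows directly from the pointwise Lipschitz estimate in \cref{def:ifs}. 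Then I would use the elementary property that the Hausdorff metric is compatible with finite unions: $d_H\plr{\bigcup_\ell A_\ell,\ \bigcup_\ell B_\ell} \leq \max_\ell d_H(A_\ell, B_\ell)$. Combining these gives
\[
    d_H\plr{\OpSet(A), \OpSet(B)} \leq \max_{\ell \in \bbL} d_H\plr{S_\ell(A), S_\ell(B)} \leq \plr{\max_{\ell \in \bbL} \rho_\ell}\, d_H(A, B),
\]
and since each $\rho_\ell < 1$ and $L$ is finite, $\rho \coloneqq \max_\ell \rho_\ell < 1$, so $\OpSet$ is a contraction with ratio $\rho$.

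Finally, applying the Banach fixed-point theorem on the complete metric space $\plr{\calK, d_H}$ yields a unique $\Gamma \in \calK$ with $\OpSet(\Gamma) = \Gamma$; one should also note $\Gamma$ is nonempty since every element of $\calK$ is nonempty by definition. The main obstacle — really the only non-routine ingredient — is establishing completeness of $\plr{\calK, d_H}$ and the union inequality for $d_H$; both are classical but require a careful argument (the completeness proof needs one to show the candidate limit set is compact and that convergence in $d_H$ actually holds). Since the excerpt explicitly attributes the statement to~\cite[Thm.~9.1]{Fal2014}, I would feel free to cite these two lemmas about the Hausdorff metric rather than reprove them, keeping the proof short and focused on the contraction estimate.
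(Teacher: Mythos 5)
Your proof is correct and follows essentially the same route as the paper, which does not reprove the result but cites~\cite[Thm.~9.1]{Fal2014}, whose argument is precisely the Banach fixed-point theorem applied to the Hutchinson operator on the complete metric space of nonempty compact sets with the Hausdorff metric. The contraction estimate via the single-map Lipschitz bound and the finite-union inequality for $d_H$ is the standard (and the cited) proof, so nothing further is needed.
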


An important class of IFS are the \emph{affine IFS}, for which all the maps \( S_{\ell} \) are affine:
\begin{equation}\label{eq:ifs-affine}
	S_\ell \colon x \mapsto A_\ell x + b_\ell
	\quad \text{and} \quad
	c_\ell = \plr*{\matI - A_\ell}^{-1} b_\ell.
\end{equation}
Then the contraction \( \rho_\ell \) is equal to the spectral norm of the matrix \( A_\ell \).
A classic subclass of affine IFS are the \emph{similar IFS} where the maps \( S_\ell \) are contractive similarities, meaning that the maps \( S_\ell \) satisfy \( \abs{S_\ell(x) - S_\ell(y)}_2 = \rho_\ell \, \abs{x - y}_2 \), for all \( x, y \in \bbR^n \) which is equivalent to saying that
\begin{equation}\label{eq:ifs-contractive-similitude}
	A_\ell = \rho_\ell \, T_\ell,
	\qquad \text{where} \ T_\ell \ \text{is an orthogonal matrix}.
\end{equation}

In what follows, we will refer to an attractor of an IFS\@ in \( \bbR^n \) as a \emph{fractal set}.
For the particular case when the IFS is affine (resp.~similar), the corresponding attractor will be referred to as a \emph{self-affine set} (resp.~\emph{self-similar set}).
From the above definition, we have immediately the following result, which will be of importance later.

\begin{lemma}\label{lem:kI}
	The pre-image of \( \Gamma \) by the maps \( S_\ell \), \( \ell\in \bbL \), satisfies \( S_\ell^{-1}(\Gamma) \cap \Gamma = \Gamma \).
\end{lemma}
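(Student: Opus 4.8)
The plan is to reduce the claimed set identity to the single containment \( S_\ell(\Gamma) \subseteq \Gamma \), which is immediate from \cref{thm:attractor-existence}. Recall that, for a subset \( E \subseteq \bbR^n \), the pre-image \( S_\ell^{-1}(\Gamma) \) denotes \( \setst{x \in \bbR^n}{S_\ell(x) \in \Gamma} \), and no invertibility of \( S_\ell \) is assumed. One inclusion is trivial: \( S_\ell^{-1}(\Gamma) \cap \Gamma \subseteq \Gamma \) always holds. So the whole content is the reverse inclusion \( \Gamma \subseteq S_\ell^{-1}(\Gamma) \cap \Gamma \), which, since \( \Gamma \subseteq \Gamma \) is trivial, amounts to showing \( \Gamma \subseteq S_\ell^{-1}(\Gamma) \).

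First I would invoke the fixed-point property \( \Gamma = \OpSet(\Gamma) = S_1(\Gamma) \cup \cdots \cup S_L(\Gamma) \) from \cref{thm:attractor-existence} together with \cref{def:hut-op}. Taking the \( \ell \)-th term of this union gives \( S_\ell(\Gamma) \subseteq \Gamma \). Next, for any \( x \in \Gamma \) we then have \( S_\ell(x) \in S_\ell(\Gamma) \subseteq \Gamma \), which is exactly the statement \( x \in S_\ell^{-1}(\Gamma) \); hence \( \Gamma \subseteq S_\ell^{-1}(\Gamma) \). Intersecting with \( \Gamma \) yields \( \Gamma \subseteq S_\ell^{-1}(\Gamma) \cap \Gamma \), and combining with the trivial inclusion gives the asserted equality \( S_\ell^{-1}(\Gamma) \cap \Gamma = \Gamma \).

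There is essentially no obstacle here: the only point worth stating carefully is that \( S_\ell^{-1} \) is to be read as a set-theoretic pre-image (so the argument does not require the \( A_\ell \) in \eqref{eq:ifs-affine} to be invertible), and that the identity should not be misread as \( S_\ell^{-1}(\Gamma) = \Gamma \), which is false in general — the pre-image \( S_\ell^{-1}(\Gamma) \) is typically much larger than \( \Gamma \), and the lemma only records that it contains \( \Gamma \). This containment is precisely what will later justify restricting integrals over \( \Gamma \) after pulling back by \( S_\ell \).
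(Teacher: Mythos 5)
Your proof is correct and follows essentially the same route as the paper: both arguments extract from the fixed-point identity \( \Gamma = \OpSet(\Gamma) \) the containment \( \Gamma \subseteq S_\ell^{-1}(\Gamma) \) (the paper via \( \Gamma \subset S_\ell^{-1}(S_\ell(\Gamma)) \) after expanding the pre-image of the union, you via \( S_\ell(\Gamma) \subseteq \Gamma \) and the definition of pre-image), the other inclusion being trivial. Your remark that no invertibility of \( S_\ell \) is needed is a fair point of care, but there is no substantive difference in method.
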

\begin{proof}
	Since \( \OpSet(\Gamma) = \Gamma \), we have, with \( S_\ell^{-1}\plr*{\Gamma} \) denoting the pre-image of \( \Gamma \),
	\[
		S_\ell^{-1}\plr*{\Gamma}
		= S_\ell^{-1}\plr*{\OpSet(\Gamma)}
		= S_\ell^{-1}\plr*{S_1(\Gamma)} \cup \cdots \cup S_\ell^{-1}\plr*{S_{L-1}(\Gamma)}
		\supset S_\ell^{-1}\plr*{S_\ell(\Gamma)}.
	\]
	The desired result follows from \( \Gamma \subset S_\ell^{-1}\plr{S_\ell(\Gamma)} \).
\end{proof}

In general, fractal sets are non-Lipschitz, but can be approximated by sets of a simpler structure, called \emph{pre-fractals} or \emph{pre-attractors}.
A sequence of pre-fractals can be constructed as \( \OpSet^p\plr{F} \), with \( F \) being an arbitrary compact set.
In a well-chosen topology, see the proof of \cref{thm:attractor-existence} in~\cite[Thm.~9.1]{Fal2014}, \( \OpSet^p\plr{F} \to \Gamma \) as \( p \to +\infty \).

\begin{example}\label[example]{ex:sas}
	In \cref{fig:sas-examples}, we have plotted classic examples of self-affine sets.
	The IFS corresponding  to the Fat Sierpi\'nski triangle, see~\cite{BMS2004,Jor2006}, is composed of the three maps \( x \to \rho x + (1-\rho) c_\ell \) where \( \rho = \plr{\sqrt{5}-1}/2 \) and the \( c_\ell \) are the vertices of an equilateral triangle.
	The one for the Koch snowflake is composed of seven contractive similitudes, see~\cite[Fig.~3]{gibbs2023}, the IFS for the Barnsley fern is composed of four maps where the coefficients are given by~\cite[Tbl.~3.8.3]{barnsley}, and a non-symmetric Cantor dust\footnote{\( A_\ell = \rho_\ell R_{\theta_\ell} \) and \( b_\ell = (1-\rho_\ell) c_\ell \) with \( (\rho_\ell, \theta_\ell, c_\ell) = \plr{0.25, 0.4, (-1.4, -1.1)} \), \( \plr{0.35, 0.2, (0.8, -0.7)} \), \( \plr{0.3, 0.3, (1.2, 1.3)} \), \( \plr{0.4, 0.1, (-1.3, 0.9)} \).}
	\begin{figure}[tbhp]
		\centering
		\subfloat[]{\label{fig:sierpinski-triangle-fat}
			\includegraphics[height=3.05cm]{{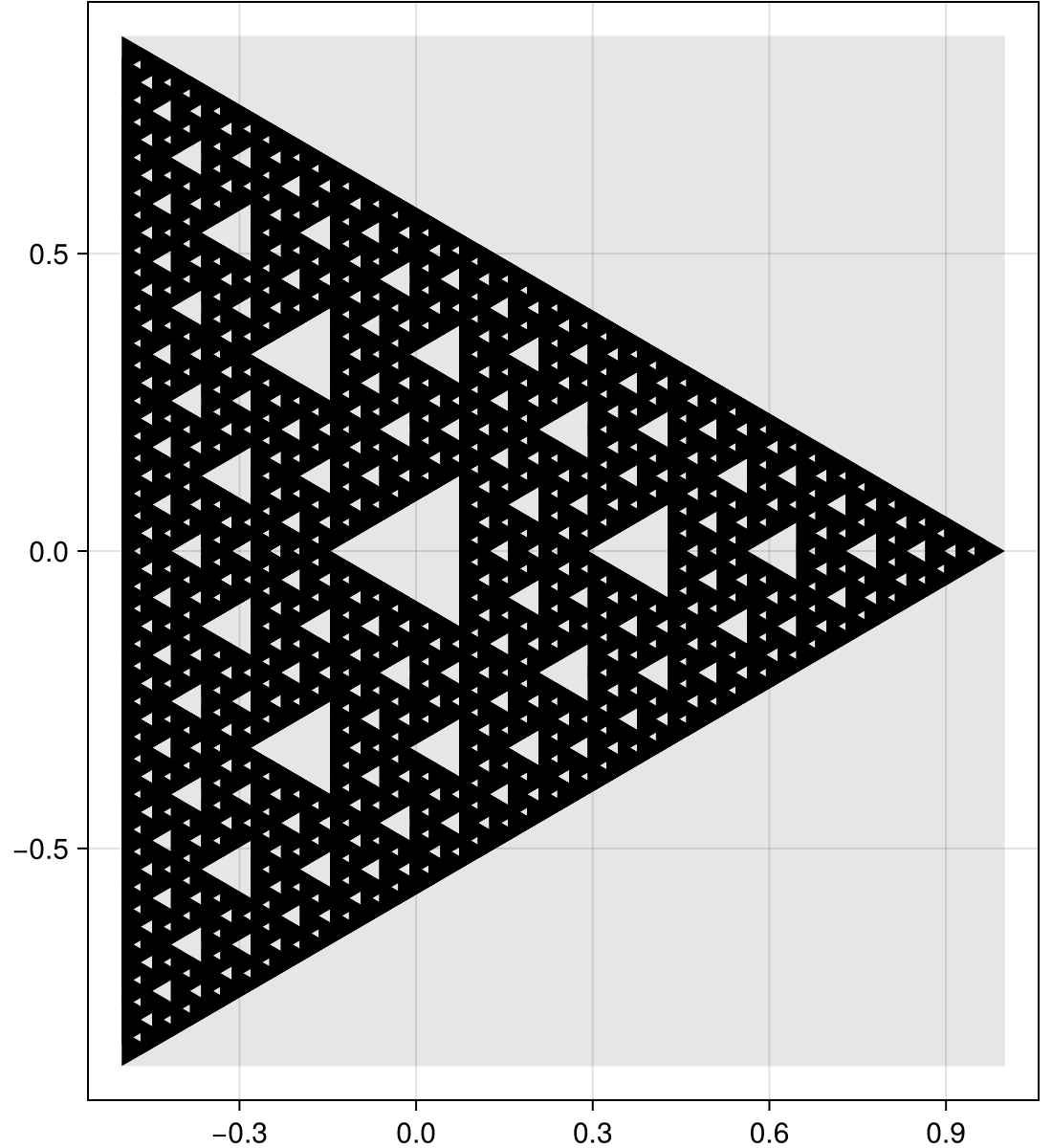}}
		}\subfloat[]{\label{fig:koch-snowflake}
			\includegraphics[height=3.05cm]{{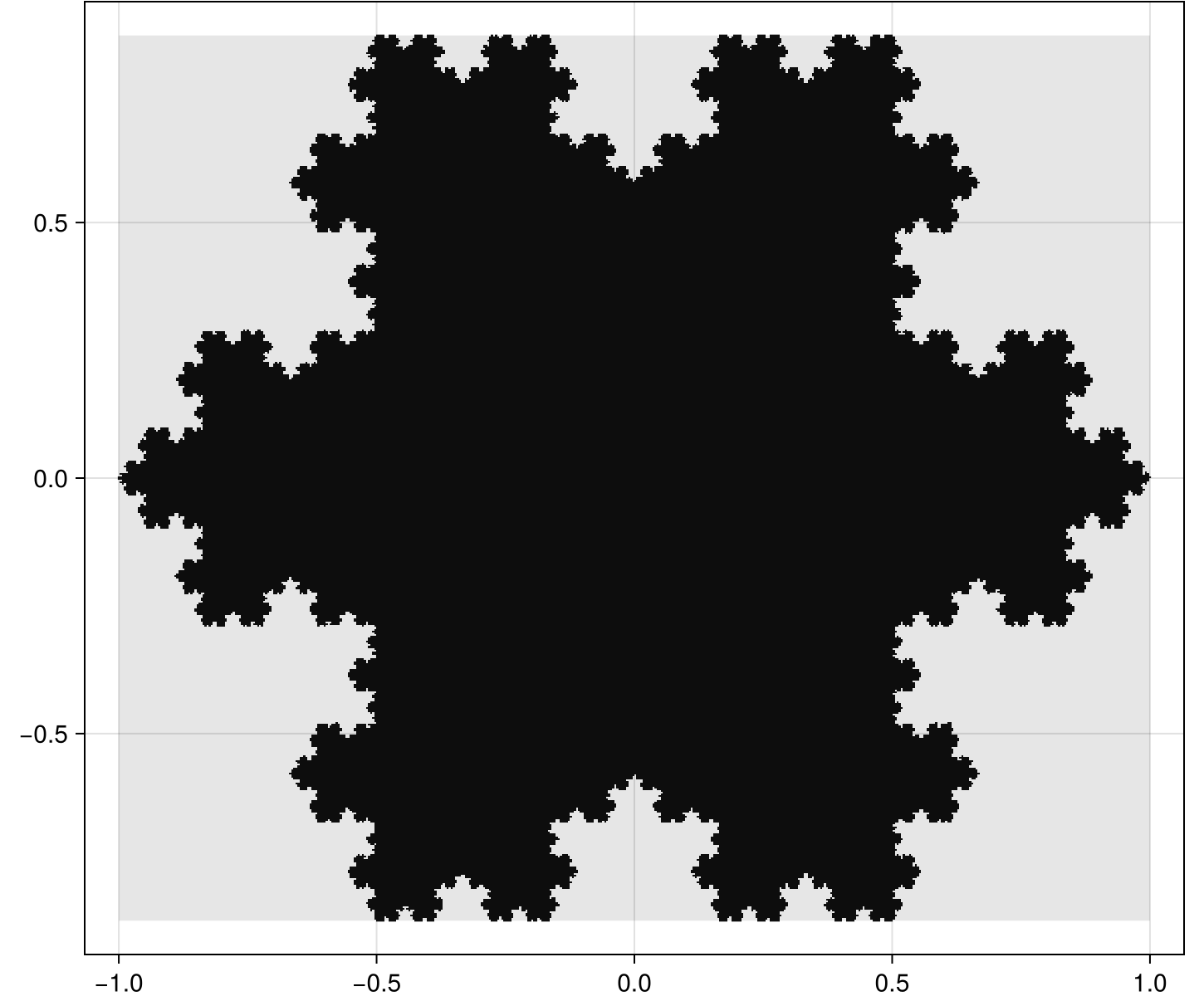}}
		}\subfloat[]{\label{fig:barnsley-fern}
			\includegraphics[height=3.05cm]{{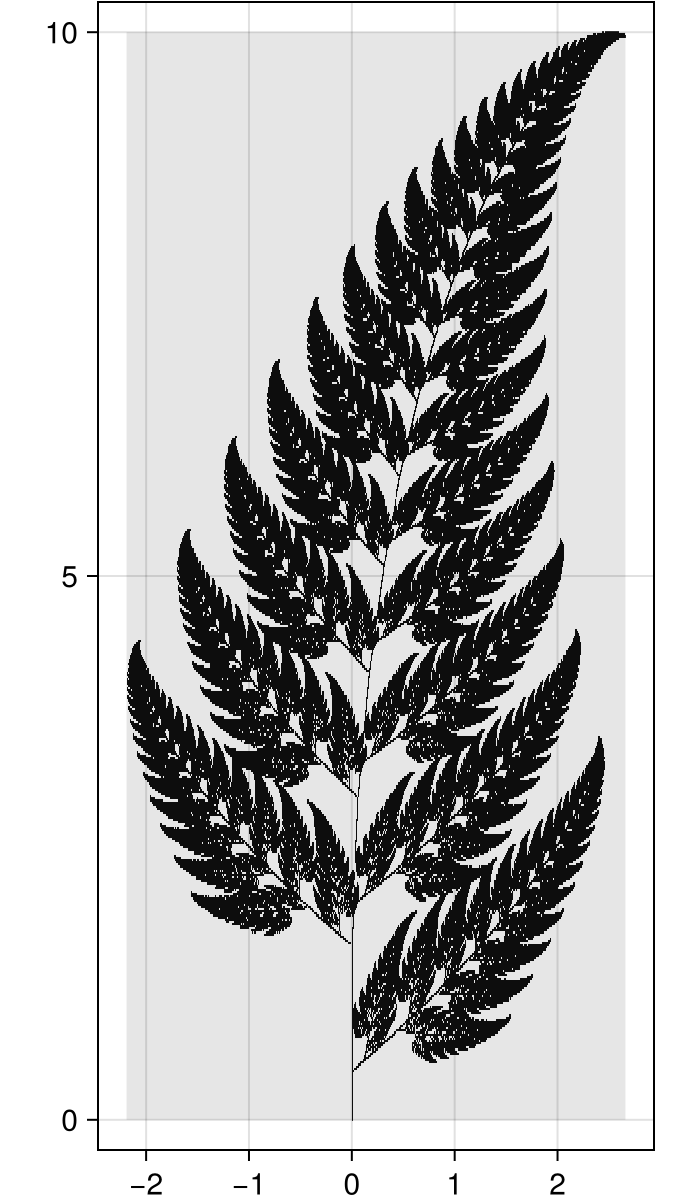}}
		}\subfloat[]{\label{fig:cantor-non-sym.pdf}
			\includegraphics[height=3.05cm]{{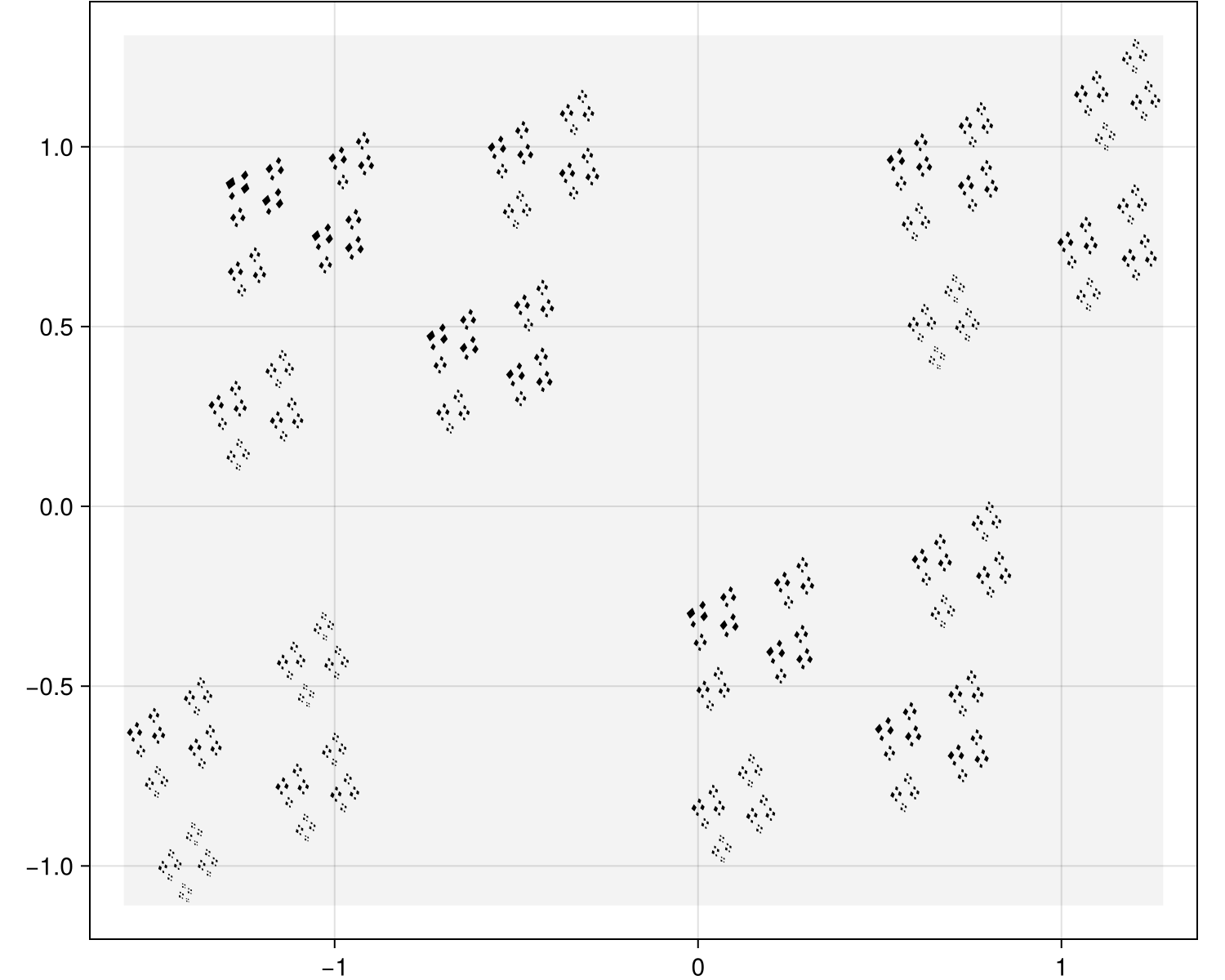}}
		}\caption{Examples of self-affine sets: (a) the fat Sierpi\'nski triangle, (b) the Koch snowflake, (c) the Barnsley fern (c), and a non-symmetric Cantor dust.
			The shaded box is the approximate best fitting rectangle, see \cref{sec:numeric}.}\label{fig:sas-examples}
	\end{figure}
\end{example}

\subsubsection{Invariant measure}\label{sec:inv_measure}

The exposition on invariant measure of an IFS is taken from~\cite{Hut1981}.
Given a nonempty compact subset \( F \subset \bbR^n \), equipped with the usual Euclidean topology, one can define an \emph{outer measure} on \( F \) as a map \( \nu \colon 2^F \to \icc{0}{+\infty} \) such that \( \nu(\varnothing) = 0 \) (the \emph{null empty set} condition) and, for any \( A, B_0, B_1, \ldots \subset F \), \( A \subset \bigcup_{i \in \bbN} B_i \) implies \( \nu\plr{A} \leq \sum_{i \in \bbN} \nu\plr{B_i} \) (the \emph{countably subadditive} condition).
We say that a set \( A \subset F \) is \( \nu \)-\emph{measurable},
if \( \nu(E) = \nu(E \cap A) + \nu(E \setminus A) \) for all \( E \subset F \) (the Carath\'eodory's criterion).
The family of measurable sets forms a \( \sigma \)-algebra.
When restricted to \( \nu \)-measurable sets, the outer measure \( \nu \) produces a measure.
In the sequel, for a reason that will appear in \cref{thm:inv_measure}, we shall restrict ourselves to \emph{Borel regular} measures: an outer measure \( \nu \) is \emph{Borel regular} if, and only if, all Borel sets\footnote{The Borel sets are the elements of the smallest \( \sigma \)-algebra containing all the open sets of \( F \).}
are measurable, and, for all \( A \subset F \) there exists a Borel set \( B \), such that \( A \subset B \) and \( \nu(A) = \nu(B) \).
The support of the measure \( \nu \) is defined by \( \Supp\nu \coloneqq F \setminus \bigcup \setst{V \ \text{open}}{\nu(V) = 0} \).
Let us further introduce
\[
	\spM_F^1 \coloneqq \setst*{\nu}{\nu \text{ is Borel regular and} \ \nu(F) = 1}.
\]
Given \( g \colon F \to F \), measurable, the push-forward measure \( \nu \circ g^{-1} \in \spM_F^1 \) is defined by
\begin{align}\label{eq:push_forward}
	\forall A \subset F, \quad \nu \circ g^{-1}\plr{A} \coloneqq \nu\plr{g^{-1}(A)}, \text{ with }g^{-1}(A) \coloneqq \setst{x \in F}{g(x) \in A}.
\end{align}

Let us now construct an invariant measure associated to an affine IFS \( \IFS \) given by the maps \( \setwt{S_\ell}{\ell \in \bbL} \).
First, let \( F \) satisfy
\( \OpSet\plr{F} \subset F \), which is true if \( F \) is chosen a closed ball of large enough radius.
Given \( \vec{\mu} = \plr{\mu_1, \ldots, \mu_L} \in \ioo{0}{1}^L \) such that \( \abs{\vec{\mu}}_1 \coloneqq \sum_{\ell\in \bbL} \mu_{\ell} = 1 \), let us define the map \( \OpMsr \colon \spM_F^1 \to \spM_F^1 \) via the following identity\footnote{The interested reader can verify that \( \OpMsr \) maps \( \spM_F^1 \) into itself because \( \OpSet\plr{F} \subset F \) and \( \abs{\vec{\mu}}_1 = 1 \).}:
\begin{equation}\label{eq:def_op_msr}
	\OpMsr \colon \nu \longmapsto \sum_{\ell \in \bbL} \, \mu_\ell \ \nu \circ S_\ell^{-1}.
\end{equation}
A measure satisfying \( \nu = \OpMsr\nu \) is called \emph{invariant} with respect to \( \plr{\IFS, \vec{\mu}} \).

\begin{theorem}\label{thm:inv_measure}
	Let \( \IFS = \setwt{S_\ell}{\ell \in \bbL} \) and \( \vec{\mu} \), \( \OpMsr \) be as in \cref{eq:def_op_msr}.
	Then there exists a unique invariant measure \( \mu \in \spM_F^1 \).
	Furthermore, the support of the measure \( \mu \) is \( \Gamma \), which implies that for any measurable \( A \subset F \), we have \( \mu(A)=\mu(A\cap \Gamma) \).
\end{theorem}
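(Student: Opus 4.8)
The plan is to obtain \( \mu \) as the unique fixed point of \( \OpMsr \) via the Banach fixed-point theorem, and then to identify its support with \( \Gamma \) using the uniqueness statement of \cref{thm:attractor-existence}. Fix a compact set \( K \) with \( \OpSet(K) \subset K \) as provided by \cref{lem:existence-large-compact}, so that \( \OpMsr \) maps \( \spM_K^1 \) into itself (the footnote to \cref{eq:def_op_msr}). I would metrize \( \spM_K^1 \) by the Hutchinson--Kantorovich distance
\[
    d(\nu_1, \nu_2) \coloneqq \sup\Bigl\{ \int_K f \di{\nu_1} - \int_K f \di{\nu_2} \;:\; f \colon K \to \bbR, \ \Lip(f) \le 1 \Bigr\},
\]
which is finite since \( \Diam K < \infty \). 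Two standard facts are needed: (a) \( d \) metrizes the weak-\( * \) topology on \( \spM_K^1 \) (via density of Lipschitz functions in \( \mathrm{C}(K) \), and metrizability of the weak-\( * \) topology on bounded subsets of \( \mathrm{C}(K)^{*} \), which holds because \( \mathrm{C}(K) \) is separable); and (b) \( (\spM_K^1, d) \) is complete, which follows from (a) together with the weak-\( * \) compactness of the set of probability measures on the compact set \( K \) (Banach--Alaoglu and the Riesz representation theorem).

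Next I would verify that \( \OpMsr \) is a contraction. If \( \Lip(f) \le 1 \), then \( f \circ S_\ell \) is \( \rho_\ell \)-Lipschitz, and the change-of-variables formula gives \( \int_K f \di{(\nu \circ S_\ell^{-1})} = \int_K f \circ S_\ell \di{\nu} \). Hence, writing \( \rho \coloneqq \max_{\ell \in \bbL} \rho_\ell < 1 \),
\[
    \int_K f \di{(\OpMsr\nu_1)} - \int_K f \di{(\OpMsr\nu_2)}
    = \sum_{\ell \in \bbL} \mu_\ell \biggl( \int_K f \circ S_\ell \di{\nu_1} - \int_K f \circ S_\ell \di{\nu_2} \biggr)
    \le \sum_{\ell \in \bbL} \mu_\ell \rho_\ell \, d(\nu_1, \nu_2) \le \rho \, d(\nu_1, \nu_2),
\]
the last step using \( \abs{\vec{\mu}}_1 = 1 \). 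Taking the supremum over admissible \( f \) yields \( d(\OpMsr\nu_1, \OpMsr\nu_2) \le \rho \, d(\nu_1, \nu_2) \), so the Banach fixed-point theorem provides a unique \( \mu \in \spM_K^1 \) with \( \mu = \OpMsr\mu \).

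It remains to prove \( \Supp\mu = \Gamma \). I would invoke two elementary properties of supports of Borel regular measures on \( K \): if \( g \) is a homeomorphism of \( K \) onto \( g(K) \) then \( \Supp(\nu \circ g^{-1}) = g(\Supp\nu) \); and, for strictly positive weights, \( \Supp\bigl( \sum_{\ell \in \bbL} \mu_\ell \nu_\ell \bigr) = \overline{\bigcup_{\ell \in \bbL} \Supp\nu_\ell} \). Since each affine contraction \( S_\ell \) (assuming, as is standard, that \( A_\ell \) is invertible) is a homeomorphism onto its image and the \( \mu_\ell \) are strictly positive, these yield
\[
    \Supp(\OpMsr\mu) = \overline{\bigcup_{\ell \in \bbL} S_\ell(\Supp\mu)} = \OpSet(\Supp\mu),
\]
the union being finite, hence already compact. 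From \( \mu = \OpMsr\mu \) we get that the nonempty compact set \( \Supp\mu \) is a fixed point of \( \OpSet \), so \cref{thm:attractor-existence} forces \( \Supp\mu = \Gamma \); in particular \( \mu \in \spM_\Gamma^1 \), and uniqueness within \( \spM_\Gamma^1 \) is inherited from uniqueness in \( \spM_K^1 \) since \( \Gamma \subset K \). Finally, any measurable \( A \) satisfies \( A \setminus \Gamma \subset K \setminus \Gamma \), and the open set \( K \setminus \Gamma \) is a countable union of open sets of \( \mu \)-measure zero by the definition of the support, so \( \mu(A \setminus \Gamma) = 0 \) and \( \mu(A) = \mu(A \cap \Gamma) \).

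The main obstacle is the functional-analytic groundwork of the first paragraph: showing that the Hutchinson distance is complete and metrizes weak-\( * \) convergence. Once that is in place, the contraction estimate and the support computation are routine. An alternative that avoids part of this machinery is to construct \( \mu \) directly as the weak-\( * \) limit of the \( d \)-Cauchy sequence \( (\OpMsr^p \nu_0)_p \); but one still needs a compactness input to know the limit is a Borel regular probability measure, together with a check that \( \OpMsr \) is weak-\( * \) continuous so that the limit is genuinely a fixed point.
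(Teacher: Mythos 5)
Your proposal is correct and follows essentially the same route as the paper: existence and uniqueness via the Banach fixed-point theorem for \( \OpMsr \) on \( \spM_K^1 \) equipped with the Monge--Kantorovich distance. The difference is one of self-containedness: where the paper cites Bogachev for completeness of \( (\spM_K^1, d) \) and Hutchinson both for the contraction estimate and for \( \Supp\mu = \Gamma \), you supply the arguments --- completeness via weak-\( * \) compactness and metrizability, the contraction estimate directly, and the support identification through \( \Supp(\OpMsr\mu) = \OpSet(\Supp\mu) \) combined with the uniqueness of the attractor in \cref{thm:attractor-existence}; the last of these is a nice, genuinely self-contained replacement for the citation. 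One caveat: do not assume the \( A_\ell \) (equivalently the \( S_\ell \)) are invertible. The theorem is stated for general contractive maps, and the paper's own examples violate invertibility (for the Barnsley fern one matrix \( A_\ell \) has nontrivial kernel). Fortunately the hypothesis is superfluous: for any continuous \( g \) on the compact set \( K \) one has \( \Supp(\nu \circ g^{-1}) = g(\Supp\nu) \), since \( g(\Supp\nu) \) is compact and \( g^{-1}(V) \) is an open neighborhood of a support point whenever \( V \) is an open neighborhood of its image; so your support computation goes through verbatim once the homeomorphism assumption is dropped. (Also, in the last step, reducing the union of \( \mu \)-null open sets covering \( K \setminus \Gamma \) to a countable one uses that \( K \) is second countable/Lindel\"of --- worth one explicit word, but standard.)
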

\begin{proof}
	This proof is due to~\cite{Hut1981}; here we fill in some arguments omitted in the above reference.
	We equip \( \spM_F^1 \) with the Monge-Kantorovich distance \( d(\mu, \nu) \):
	\[
		\forall \plr{\mu, \nu} \in \spM_F^1, \quad
		d(\mu, \nu) = \sup \setst*{\int_F f(x) \di{\mu} - \int_F f(x) \di{\nu}}{f \colon F \to \bbR,\ \Lip(f) \leq 1}.
	\]
	Then \( (\spM_F^1, d) \) is a complete metric space, see the discussion after~\cite[Thm.~8.10.43, Vol.~II]{Bog2007}; in the latter reference, the author restricts his attention to the Radon measures, however, since \( F \) is complete and separable, measures in \( \spM_F^1 \) are Radon measures, see~\cite[Thm.~7.1.7, Vol.~II]{Bog2007}.
	The operator \( \OpMsr \) is a contractive map on this metric space, see the proof of~\cite[Thm.~4.4(1)]{Hut1981}.  Existence and uniqueness of \( \mu \) follows by the fixed point theorem.
	The property \( \Supp\mu = \Gamma \) is proven in~\cite[Thm.~4.4(4)]{Hut1981}.
\end{proof}

\begin{remark}[Hausdorff measure and application to the wave scattering by fractal screens]\label[remark]{rem:hausdorff} Given an IFS, one can construct infinitely many invariant measures, parame\-trized by \( \vec{\mu} \).
	Nonetheless, in some cases, this measure coincides with the (normalized) \( d \)-dimensional Hausdorff measure \( \msH^d(\Gamma) \) (see~\cite[3.10(iii)]{Bog2007}).
	Assume that the IFS \( \IFS = \setwt{S_\ell}{\ell \in \bbL} \) satisfy the \emph{Open Set Condition} (OSC).
	More precisely, there exists a nonempty bounded open set \( U \) such that \( \OpSet\plr{U} \subset U \) and \( S_i(U) \cap S_j(U) = \varnothing \) for \( i \neq j \in \bbL \).
	Moreover, let \( \IFS \) be a similar IFS, \emph{cf.}~\cref{eq:ifs-contractive-similitude}.
	In this case one can show that the Hausdorff dimension \( d \) of \( \Gamma \) is a unique solution to \( \sum_{\ell \in \bbL} \rho_\ell^d = 1 \), see~\cite[Thm.~9.3]{Fal2014}.
	Choosing \( \mu_\ell = \rho_\ell^d \) yields the invariant measure \( \mu \colon E \mapsto \msH^d\plr{E \cap \Gamma} / \msH^d(\Gamma) \), see~\cite[Thm.~5.3(1)(iii)]{Hut1981}.
	This example is of importance for applications in the computational scattering theory.
	Recently, Hausdorff measure based boundary element methods (BEM) were developed for scattering by fractal screens, see~\cite{caetano2024}, and extended to non-planar fractals in~\cite{cw25}.
	A non-exhaustive list of self-similar sets satisfying the OSC includes Sierpi\'nski gasket, Koch snowflake and Cantor sets.
	Let us remark that one of favorable properties of the Hausdorff measure compared
	to a generic invariant measure is its invariance with respect to orthogonal transformations.
	This can be used for efficient computation of singular integrals arising in the Hausdorff BEM, \emph{cf.}~\cite{gibbs2023,gibbs2024}.
\end{remark}

\subsubsection{Integrals with respect to invariant measures}

Let us fix \( \vec{\mu}\in \ioo{0}{1}^L \), with \( \abs{\vec{\mu}}_1 = 1 \) and \( \mu \) the associated invariant measure as defined in \cref{thm:inv_measure}.

As discussed in the introduction, the goal of this work is to approximate numerically \( \int_{\Gamma}f \di{\mu} \), with \( f \) being sufficiently regular and defined in the vicinity of \( \Gamma \).
This will be done by using a certain self-similarity property of the integral, which is introduced in the present section.
The proof of this property is in turn based on the following change-of-variables formula, see~\cite[Thm.~3.6.1.]{Bog2007}:
\begin{equation}\label{eq:change-of-variable}
	\int_{\Gamma} f(x) \di\plr{\mu \circ g^{-1}} = \int_{g^{-1}(\Gamma)} f \circ g(x) \di{\mu},
\end{equation}
In the above, \( g \colon \Gamma \rightarrow \Gamma \) is measurable, and the measurable function \( f \) is such that \( f \circ g \in \bbL^1\plr{\Gamma} \); the push-forward measure \( \mu \circ g^{-1} \) is defined in  \cref{eq:push_forward}.

\paragraph{Motivation}
By analogy with the Hutchinson operator \( \OpSet \) acting on sets, recall \cref{def:hut-op}, and the Hutchinson-like operator \( \OpMsr \) acting on measures, see \cref{eq:def_op_msr}, one can introduce a so-called Ruelle operator~\cite{fan99} acting on complex-valued functions defined in \( \bbR^n \), for instance in \( \spC\plr{\bbR^n} \), by:
\begin{equation}\label{eq:def-op-fct}
	\OpFct \colon f \in \spC\plr{\bbR^n} \mapsto \sum_{\ell \in \bbL} \mu_\ell \, f \circ S_\ell \in \spC\plr{\bbR^n}.
\end{equation}
The reader will note that, with respect to the operator \( \OpSet \), see \cref{def:hut-op}, compact sets are replaced by functions and the union is replaced by the weighted addition.

\begin{proposition}\label{pro:invariance-integral}
	Given an IFS \( \IFS = \setwt{S_\ell}{\ell \in \bbL} \), its attractor \( \Gamma \), and an invariant measure  \( \mu \)  associated to \( \IFS \), for \( f \in \spC\plr{\Gamma} \), we have
	\[
		\int_\Gamma f \di{\mu}
		= \sum_{\ell \in \bbL} \mu_\ell \int_\Gamma f \circ S_\ell \di{\mu}
		= \int_\Gamma \OpFct\clr{f} \di{\mu},
	\]
	with \( \OpFct \) defined in \cref{eq:def-op-fct}.
\end{proposition}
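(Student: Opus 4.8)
The plan is to use the invariance property $\mu = \OpMsr\mu$ of the measure together with the change-of-variables formula \cref{eq:change-of-variable}. The key observation is that the middle and right expressions in the claimed identity are literally equal by the definition \cref{eq:def-op-fct} of the Ruelle operator $\OpFct$ and linearity of the integral, so the only real content is the first equality. Note that, since $\mu \in \spM_\Gamma^1$ is invariant, we may legitimately work on $\Gamma$ and use $S_\ell \colon \Gamma \to \Gamma$ (which is well-defined as a map into $\Gamma$ since $S_\ell(\Gamma) \subset \OpSet(\Gamma) = \Gamma$).

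First I would write $\int_\Gamma f \di\mu = \int_\Gamma f \di(\OpMsr\mu)$, using $\mu = \OpMsr\mu$ from \cref{thm:inv_measure}. Then, unfolding the definition \cref{eq:def_op_msr} of $\OpMsr$ and using linearity of the integral with respect to the measure, this equals $\sum_{\ell \in \bbL} \mu_\ell \int_\Gamma f \di(\mu \circ S_\ell^{-1})$. Next, for each $\ell$, I would apply the change-of-variables formula \cref{eq:change-of-variable} with $g = S_\ell$, which gives $\int_\Gamma f \di(\mu \circ S_\ell^{-1}) = \int_{S_\ell^{-1}(\Gamma)} f \circ S_\ell \di\mu$. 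Here I would invoke \cref{lem:kI}, which tells us $S_\ell^{-1}(\Gamma) \cap \Gamma = \Gamma$, i.e.\ $\Gamma \subset S_\ell^{-1}(\Gamma)$; combined with the fact (from \cref{thm:inv_measure}) that $\mu$ is supported on $\Gamma$, so $\mu$ assigns zero mass to $S_\ell^{-1}(\Gamma) \setminus \Gamma$, the integral over $S_\ell^{-1}(\Gamma)$ coincides with the integral over $\Gamma$. Thus $\int_\Gamma f \di(\mu \circ S_\ell^{-1}) = \int_\Gamma f \circ S_\ell \di\mu$, and summing over $\ell$ with weights $\mu_\ell$ yields the first equality. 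The second equality is then immediate from \cref{eq:def-op-fct}.

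The main point requiring care — the closest thing to an obstacle — is the bookkeeping around domains of integration: the change-of-variables formula naturally produces an integral over the preimage $S_\ell^{-1}(\Gamma)$, which is generally larger than $\Gamma$ and may even be unbounded, so one must justify replacing it by $\Gamma$. This is exactly what \cref{lem:kI} together with the support property $\Supp\mu = \Gamma$ is for, and it is worth stating explicitly rather than glossing over. One should also check the integrability hypothesis of \cref{eq:change-of-variable}: since $f \in \spC(\Gamma)$ and $\Gamma$ is compact, $f$ is bounded on $\Gamma$, hence $f \circ S_\ell \in \bbL^1(\Gamma, \mu)$ because $\mu$ is a probability measure, so the formula applies. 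No other subtlety arises, and the proof is short.
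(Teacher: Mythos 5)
Your proposal is correct and follows essentially the same route as the paper's own proof: invariance $\mu=\OpMsr\mu$, linearity, the change-of-variables formula \cref{eq:change-of-variable} with $g=S_\ell$, and then \cref{lem:kI} together with $\Supp\mu=\Gamma$ to replace the integration domain $S_\ell^{-1}(\Gamma)$ by $\Gamma$. Your extra remarks on the integrability hypothesis and the domain bookkeeping are fine but add nothing beyond what the paper already uses implicitly.
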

\begin{proof}
	Given \( f \in \spC\plr{\Gamma} \), we compute, by definition of \( \OpMsr \) in \cref{eq:def_op_msr} and \( \OpMsr\mu=\mu \),
	\begin{align}\label{eq:gamma_f}
		\int_\Gamma f \di{\mu}
		= \int_\Gamma f \di\plr*{\OpMsr\mu}
		= \sum_{\ell \in \bbL} \mu_\ell \int_\Gamma f \di\plr*{\mu \circ S_\ell^{-1}}.
	\end{align}
	From the change of variable formula \cref{eq:change-of-variable} and \( \Supp\plr{\mu} = \Gamma \), we get
	\[
		\int_\Gamma f \di\plr*{\mu \circ S_\ell^{-1}}
		= \int_{S_\ell^{-1}\plr{\Gamma}} f \circ S_\ell \di\mu
		= \int_{S_\ell^{-1}\plr{\Gamma} \cap \Gamma} f \circ S_\ell \di\mu
		= \int_\Gamma f\circ S_{\ell} \di\mu,
	\]
	where the latter identity follows from \cref{lem:kI}. We conclude with \cref{eq:gamma_f}.
\end{proof}

\paragraph{Ruelle operator on spaces of polynomials for affine IFS}
We shall be particularly interested in the action of \( \OpFct \) on polynomials.
From now on, we will limit the discussion to \emph{affine} IFS\@.
We denote by \( \spPolyP \) the space of polynomials of \( n \) variables and, for any \( k \in \bbN \), by \( \spPolyP[k] \) the subspace of polynomials of total degree less or equal to \( k \).
The space of polynomials can be decomposed into a direct sum of homogeneous polynomials of degree \( k \):
for any multi-index \( \vec{\alpha} \in \bbN^n \), we define the monomial \( x^{\vec{\alpha}} \coloneqq x_1^{\alpha_1} \cdots x_n^{\alpha_n} \in \spPolyP \) of total degree \( \abs{\vec{\alpha}}_1 = \alpha_1 + \cdots + \alpha_n \) and
\begin{equation}\label{defPkhom}
	\spPolyHom{k} \coloneqq \vect\setst[\big]{x^{\vec{\alpha}}}{\abs{\vec{\alpha}}_1 =  k},
	\text{ so that }
	\spPolyP[k] = \bigoplus_{0 \leq q \leq k} \, \spPolyHom{q} \text{ and } \spPolyP = \bigoplus_{k \in \bbN} \, \spPolyHom{k}
\end{equation}
Seen as a subspace of \( \spC\plr{\bbR^n} \), \( \spPolyP \) is stable by  \( \OpFct \) and the action of \( \OpFct \) does not increase the total degree, in other words, for all \( p \in \spPolyP[k] \), we have \( \OpFct p \in \spPolyP[k] \).
This means that \( \OpFct \) has an upper triangular structure with respect to the decomposition \cref{defPkhom} of \( \spPolyP \): there exists a
family of linear operators \( \OpFct_{k, q} \in \Hom\plr{{\spPolyHom{k}}, \spPolyHom{q}} \), with \( 0 \leq q \leq k \) such that
\begin{equation}\label{eq:opfct-decomposition}
	\OpFct p = \sum_{0 \leq q \leq k} \OpFct_{k,q} p, \quad \text{for all} \ p \in \spPolyHom{k}.
\end{equation}
In \cref{sec:cubature-S-inv}, we shall need some information about the eigenvalues of \( \OpFct \) restricted to the space of polynomials \( \spPolyP \), which we denote by \( \OpFct_{\spPolyP} \).
Let us denote by \( \sigma\plr{\OpFct_{\spPolyP}} \) the set of eigenvalues of \( \OpFct_{\spPolyP} \) (namely, values \( \lambda\in \bbC \), such that there exists \( p\in \spPolyP \), \( p\neq 0 \), for which \( \OpFct_{\spPolyP} p = \lambda p \)). Due to the upper-triangular like structure of \( \OpFct_{\spPolyP} \), it holds that
\begin{align}\label{eq:sigmaF}
	\sigma\plr{\OpFct_{\spPolyP}} = \bigcup_{k \in \bbN} \sigma\plr[\big]{\OpFct_{k,k}}.
\end{align}
For \( k = 0 \), one easily sees that, as \( \mu_1 + \cdots + \mu_L = 1 \),
\begin{equation}\label{eq:F00}
	\OpFct_{0,0} = \Id,
	\quad (\text{the identity operator in \( \spPolyP[0] \)}),
\end{equation}
which means that \( 1 \) is an eigenvalue of \( \OpFct_{\spPolyP} \).
Furthermore, one can localize the eigenvalues of the operators \( \OpFct_{k,k} \), for \( k\neq 0 \), as made precise in the following lemma.

\begin{lemma}\label{lem:loc-spectrum}
	For any \( k \geq 1 \), \( \sigma\plr{\OpFct_{k,k}} \subset \overline{D}(0, r_k) \) (the closed disk of center \( 0 \) and radius \( r_k \)), where
	\[
		r_k
		\coloneqq \sum_{\ell \in \bbL} \mu_\ell \, \rho_\ell^k
		\leq {{\rho}}_{\max}^k < 1,
		\qquad \rho_{\max} = \max_{\ell\in\bbL}\rho_{\ell}.
	\]
\end{lemma}
\begin{proof}
	With \( S_\ell\plr{x} = A_\ell x + b_\ell \), one sees immediately that for all \( p \in \spPolyHom{k} \), we have \( p \circ S_\ell(x)  = p\plr*{A_\ell x} + R_\ell(x) \) where \( R_\ell \in \spPolyP[k-1] \).
	From this observation, we infer the following identity, valid for all \( p \in \spPolyHom{k} \):
	\[
		\OpFct_{k,k} p(x) = \sum_{\ell \in \bbL} \mu_\ell \, p\plr*{A_\ell x}.
	\]
	Let \( B = \overline{B}(0, 1) \) be the unit closed ball of \( \bbR^n \)  and \( p \in \spPolyHom{k} \) be an eigenvector of \( \OpFct_{k,k} \) associated to \( \lambda \in \bbC \).
	Of course \( \norm{p}_{\spL^\infty(B)} \neq 0 \) and there exists \( x_* \in B \) such that \( \abs{p\plr{x_*}} = \norm{p}_{\spL^\infty(B)} \).
	However, \( \OpFct_{k,k} p = \lambda \, p \) implies \( \OpFct_{k,k} p\plr{x_*} = \lambda \, p\plr{x_*} \), thus
	\[
		\abs*{\lambda \, p\plr*{x_*}}
		\leq \sum_{\ell \in \bbL} \mu_\ell \, \abs*{p\plr*{A_\ell x_*}}
	\]
	and since each matrix \( A_\ell \) is \( \rho_\ell \)-contractive, \( A_\ell x_* \in \rho_\ell B \), thus, by homogeneity,
	\[
		\abs*{p\plr*{A_\ell x_*}}
		\leq \max_{x \in \rho_\ell B} \abs*{p(x)}
		= \max_{y \in B} \abs*{p\plr*{\rho_\ell y}}
		= \rho_\ell^k \, \max_{y \in B} \abs*{p\plr*{y}}
		= \rho_\ell^k \, \abs*{p\plr*{x_*}}.
	\]
	We then conclude the proof since, as \( \abs{\vec{\mu}}_1 = 1 \),
	\[
		\abs{\lambda}
		\leq \sum_{\ell \in \bbL} \mu_\ell \rho_\ell^k
		\leq {{\rho}}_{\max}^k \sum_{\ell \in \bbL} \mu_\ell
		= {{\rho}}_{\max}^k
	\]
\end{proof}

\begin{corollary}\label[corollary]{cor:1-eig-simple} The value \( 1 \) is a simple eigenvalue of the operator \( \OpFct_{\spPolyP} \) (\emph{i.e.}~its algebraic, and thus, geometric, multiplicity equals to 1). Moreover, it is also the largest eigenvalue of \( \OpFct_{\spPolyP} \).
\end{corollary}

\begin{remark}\label[remark]{rem:eigs-exact} In dimension \( n = 1 \), since all the spaces \( \spPolyHom{k} \) are one-dimensional, the operators \( \OpFct_{k,k} \) are reduced to multiplication operators.
	The eigenvalues  \( \sigma\plr{\OpFct_{\spPolyP}} \) of the operator \( \OpFct_{\spPolyP} \) are known explicitly; they are real and positive.
	Indeed, from the proof of \cref{lem:loc-spectrum}, one easily sees that
	\[
		\sigma\plr{\OpFct_{\spPolyP}} = \setwt*{ \mu_1 a_1^k + \cdots + \mu_L a_L^k }{ k \in \bbN }
		\quad \text{where} \
		A_\ell = a_\ell = \pm \rho_\ell.
	\]
	The above reasoning can be extended to a higher dimensional case, when all matrices \( A_\ell \) are diagonal, \( A_\ell = \diag\plr*{a_{\ell,1}, \ldots, a_{\ell,n}} \) with \( \abs*{a_{\ell,i}} \leq \rho_\ell < 1 \).
	In this case, it is readily seen that \( \plr{S_\ell\plr{x}}^{\vec{\alpha}} = a_\ell^{\vec{\alpha}} \, x^{\vec{\alpha}} + R_{\vec{\alpha}} \), where \( a_\ell^{\vec{\alpha}} = a_{\ell,1}^{\alpha_1} \cdots a_{\ell,n}^{\alpha_n} \)
	and \( R_{\vec{\alpha}} \in \spPolyP[k-1] \). Thus, the operator \( \OpFct_{k,k} \) is diagonal in the basis \( \setst{x^{\vec{\alpha}}}{\abs{\vec{\alpha}}_1 = k} \) of \( \spPolyHom{k} \), and
	we have
	\[
		\sigma\plr*{\OpFct_{\spPolyP}} = \bigcup_{k \in \bbN} \setst*{ \mu_1 a_1^{\vec{\alpha}} + \cdots + \mu_L a_L^{\vec{\alpha}} }{ \abs{\vec{\alpha}}_1 = k }.
	\]
\end{remark}

\subsection{Interpolatory cubature on self-affine sets}

\subsubsection{Definition and associated operators}

Let \( K \supseteq \Gamma \) be a non-empty compact set.
While the results of this section apply to any non-empty compact set \( K \supseteq \Gamma \), in particular \( K = \Gamma \), they are of practical interest when \( K \) allows for an easy choice of cubature points, \emph{e.g.}~a hyperrectangle.
As discussed in the introduction, the goal of this article is to construct a cubature for an approximation of the integral of the function \( f \in \spC(K) \) over \( \Gamma \) with the invariant measure \( \mu \).
One of the commonly used types of cubature formulae provides such an approximation of the form
\begin{equation}\label{eq:def-cubature}
	\int_\Gamma f(x) \, \di{\mu}
	\simeq \Quad\clr{f}
	\coloneqq \sum_{1 \leq i \leq M} w_i \, f\plr*{x_i},
\end{equation}
where \( \calX \coloneqq \setwt{x_i}{1 \leq i \leq M} \subset K \) is the set of \emph{cubature points} and the vector \( \vec{w} = \plr{w_1, \ldots, w_M} \in \bbR^M \) is the vector of \emph{cubature weights}.

\begin{remark}
	As the cubature formula depends of on both \( \calX \) and \( \vec{w} \), a better notation would be \( \Quad_{\calX, \vec{w}} \).
	However, we have abandoned this option in order to avoid heavy notation, the role of \( \calX \) and \( \vec{w} \) being implicit.
\end{remark}

Let us remark that common cubature formulae (\emph{e.g.}~designed for \( \Gamma \) being an interval and \( \mu \) being the Lebesgue's measure) rely on a set of points \( \calX \subset \Gamma \).
For self-affine sets, the condition \( \calX\subset \Gamma \) is difficult to ensure, and therefore, we do not provide any constraints on the location of the points \( \calX \), but the ones described below.
This is different from the Gauss quadrature based on the work~\cite{mantica96}, where the location of the points \( \cal X\) depends on the measure \( \mu \).

For constructing \( \Quad \), we shall follow the classic paradigm for constructing a cubature formula (interpolatory cubature), where one requires that ``\( \simeq \)'' becomes ``\( = \)'' in \cref{eq:def-cubature}, when \( f \) belongs to a finite dimensional space \( \calP \subset \spPolyP \) of  polynomials.
The idea behind being that any continuous function can be approximated uniformly by a sequence of polynomials.
In the rest of the article, we assume that \( \spPolyP[0] \subset \calP \).

\begin{remark}[Choice of the space \( \calP \)]\label[remark]{rem_choiseP} The usual practical choice for the space is \( \calP = \spPolyP[k] \) or \( \calP = \spPolyQ{k} \), both choices being particularly well-suited for the error analysis.
	Recall that the space \( \spPolyQ{k} \) is defined as \( \Span\setst{x^{\vec{\alpha}}}{\abs{\vec{\alpha}}_\infty \leq k} \) with \( \abs{\vec{\alpha}}_\infty = \max_{1 \leq i \leq n} \alpha_i \).
	However, all theoretical arguments extend to a general space \( \calP \).
\end{remark}

Given \( \calP \), the usual procedure is first to consider a set \( \calX \coloneqq \setwt{x_i}{1 \leq i \leq M} \), where \( M = \dim\calP \), of  cubature points  which is  \( \calP \)-unisolvent.

\begin{definition}
	The evaluation operator associates to any continuous function the set of its values on \( \mathcal{X} \):
	\[
		\Eval \colon \spC\plr*{\bbR^n} \to \bbR^M, \
		f \mapsto \vec{f} \coloneqq \plr*{f\plr*{x_1}, \ldots, f\plr*{x_M}}.
	\]
	Then, by definition the set of points \( \calX \) is \( \calP \)-unisolvent if, and only if, the map \( \Eval \) is injective from \( \calP \) into \( \bbR^M \).
\end{definition}
If \( \calX \) is  \( \calP \)-unisolvent, \( \Eval_{\calP} \coloneqq \restr{\Eval}{\calP} \) is thus bijective from \( \calP \) to \( \bbR^M \).
The \( \calP \)-unisolvency in \( \cal X \) is a necessary and sufficient condition for constructing the set of Lagrange polynomials \( \Lpoly_i \) in \( \calP \), associated to the set \( \calX \), defined by \( \Lpoly_i\plr*{x_j} = \delta_{i,j} \), which is equivalent to \( \Lpoly_i = \Eval^{-1}_{\calP}\plr*{\vec{e}_i} \),
where \( \blr{\vec{e}_i} \) is the canonical basis of \( \bbR^M \).
The set \( \plr{\Lpoly_i}_{i=1}^M \) is a basis of \( \calP \) and
\begin{equation}\label{eq:decomp-lagrange}
	\forall p \in \calP, \qquad
	p(x) = \sum_{1 \leq i \leq M}  p\plr*{x_i} \; \Lpoly_i(x).
\end{equation}

\begin{remark}
	Even though this is not explicit in the notation, the Lagrange polynomials \( \Lpoly_j \) do depend on \( \calP \) and \( \calX \).
\end{remark}

\begin{remark}\label[remark]{rem:interpolation} The inverse of the operator \( \Eval_\calP \) is nothing but the \emph{interpolation} operator that associates to a vector \( \vec{a} = \plr{a_1, \ldots, a_M} \) the unique polynomial of \( \calP \) that takes the value \( a_i \) at point \( x_i \), namely
	\begin{equation}
		\Interp_{\calP} \colon \bbR^M \to \calP
		\quad \text{ such that } \quad
		\Interp_{\calP}\plr{\vec{a}} = \sum_{1 \leq i \leq M} a_i \, \Lpoly_i.
	\end{equation}
	We shall use later the adjoint operator \( \Interp^*_{\calP} \colon \plr{\calP; \norm{\, \cdot \,}_{\spL^2(\Gamma;\mu)}} \to \bbR^M \) (where \( \bbR^M \) is equipped with the Euclidean inner product).
	The reader will easily verify that
	\begin{align}\label{eq:iadj}
		\Interp^*_{\calP} f = \vec{f},
		\quad \text{where} \ f_i = \int_\Gamma f \, \Lpoly_i \di\mu,
		\quad \text{for} \ i = 1, \ldots, M.
	\end{align}
\end{remark}

Note that the cubature formula \cref{eq:def-cubature} can be rewritten in algebraic form as
\begin{equation}\label{eq:def-quad-2}
	\forall f \in \spC(K),
	\quad
	\Quad\clr{f} = \vec{w} \cdot \Eval(f)
\end{equation}
where \( x \cdot y \) is the inner product in \( \bbR^M \).
Given \( \calP \) and \( \calX \) a \( \calP \)-unisolvent set, we would like to find \( \vec{w} \) in such a way that we integrate exactly all the functions in \( \calP \).
The (trivial) answer is provided by the following lemma.

\begin{lemma}\label{lem:exact-weight}
	Given \( \calP \subset \spPolyP \) and a \( \calP \)-unisolvent set \( \calX \), there exists a unique choice of cubature weights  \( \setwt{w_i}{1 \leq i \leq M} \) such that \( \int_\Gamma p(x) \di{\mu} = \Quad\clr{p} \), for all \( p \in \calP \).
	These cubature weights are given by
	\begin{equation}\label{eq:lagrange-weights}
		w_i = \int_\Gamma \Lpoly_i(x) \di{\mu},
		\qquad \forall 1 \leq i \leq M.
	\end{equation}
\end{lemma}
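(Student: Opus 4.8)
The plan is to prove this by a straightforward dimension-counting / linear-algebra argument, using the fact that $\calX$ is $\calP$-unisolvent and that $\dim\calP = M$. First I would observe that a choice of weights $\vec{w} = (w_1,\dots,w_M)$ that integrates $\calP$ exactly is by \cref{eq:def-quad-2} a solution of the linear system
\begin{equation*}
    \vec{w}\cdot\Eval(p) = \int_\Gamma p\di{\mu}, \qquad \forall p \in \calP.
\end{equation*}
Since this must hold for all $p\in\calP$ and $\calP$ is $M$-dimensional, it suffices that it hold when $p$ ranges over the basis $\{\Lpoly_1,\dots,\Lpoly_M\}$ of Lagrange polynomials. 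Because $\Eval(\Lpoly_i) = \vec{e}_i$ by construction, the system decouples completely: the equation for $p = \Lpoly_i$ reads $\vec{w}\cdot\vec{e}_i = w_i = \int_\Gamma \Lpoly_i\di{\mu}$. This simultaneously proves existence (the displayed formula \cref{eq:lagrange-weights} does the job) and uniqueness (the value of each $w_i$ is forced).

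To close the argument cleanly I would then verify that the weights in \cref{eq:lagrange-weights} do integrate every $p\in\calP$ exactly, not just the basis elements, which follows immediately from linearity together with the Lagrange decomposition \cref{eq:decomp-lagrange}: for any $p\in\calP$,
\begin{equation*}
    \Quad\clr{p} = \sum_{1\le i\le M} w_i\, p(x_i)
    = \sum_{1\le i\le M} p(x_i)\int_\Gamma \Lpoly_i\di{\mu}
    = \int_\Gamma \Big(\sum_{1\le i\le M} p(x_i)\Lpoly_i\Big)\di{\mu}
    = \int_\Gamma p\di{\mu}.
\end{equation*}
Here one uses that $\Gamma$ has finite ($=1$) measure so the integral is linear on the finite sum, and that $p$ is continuous hence $\mu$-integrable on the compact set $\Gamma$.

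There is essentially no obstacle here — the statement is, as the authors themselves note, "trivial." The only thing to be slightly careful about is phrasing: one wants to present it as solving the linear system $\Eval_\calP^\trans \vec{w} = \vec{b}$ where $\vec{b}_i = \int_\Gamma \Lpoly_i \di\mu$ would be the wrong transpose — rather, the correct statement is that requiring exactness on $\calP$ is equivalent to requiring it on the $M$ Lagrange basis functions, and the matrix of the resulting system in the Lagrange basis is the identity (since $\Eval(\Lpoly_i)=\vec e_i$), giving at once a unique solution. I would therefore structure the proof around the Lagrange basis from the start, which makes both existence and uniqueness manifest in one line and avoids invoking invertibility of $\Eval_\calP$ abstractly (though that is an equally valid route: $\vec w$ is uniquely determined as the image under $(\Eval_\calP^{-1})^\ast$, or equivalently $(\Eval_\calP^\ast)^{-1}$, of the functional $p\mapsto\int_\Gamma p\di\mu$ on $\calP$).
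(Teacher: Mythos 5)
Your proof is correct, and it is precisely the intended argument: the paper omits the proof as trivial, and the natural route is exactly yours — exactness on \( \calP \) is equivalent (by linearity) to exactness on the Lagrange basis, where \( \Quad\clr{\Lpoly_i} = w_i \) forces \cref{eq:lagrange-weights} (uniqueness), while the Lagrange decomposition \cref{eq:decomp-lagrange} gives exactness for all of \( \calP \) (existence). No gaps; nothing further needed.
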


\Cref{lem:exact-weight} is essentially of theoretical interest since exploiting it would require to compute the integrals in \cref{eq:lagrange-weights}.
This justifies an alternative approach to define cubature weights that we develop in the next sections.

Before proceeding into the definition of the cubature weights, we will need to introduce the notion of \( \IFS \)-invariance of a polynomial space.
We shall see, in \cref{sec:cubature-S-inv}, that cubature weights based on such polynomial spaces can be computed in a purely algebraic manner.
On the other hand, as we shall see in \cref{sec:cubature-non-S-inv}, for polynomial spaces not satisfying this condition, the natural extension of the algebraic method will provide a set of practically useful cubature weights.

\subsubsection{Definition of \texorpdfstring{\( \IFS \)}{S}-invariant spaces}\label{sec:S-inv}

\begin{definition}
	A finite-dimensional subspace \( \calP \) of \( \spPolyP \) is said to be \( \IFS \)-invariant if and only if \( \spPolyP[0] \subset \calP \) and \( \calP \) is stable by the operator \( \OpFct \), meaning that \( \OpFct\plr{\calP} \subset \calP \).
\end{definition}

\begin{remark}
	The reader will easily verify that for any \( \IFS \) and any \( k \), the space \( \spPolyP[k] \) is \( \IFS \)-invariant.
	If, for all \( \ell \), the matrix \( A_\ell \) has in each row and column only one non-zero element, the space \( \spPolyQ{k} \) is \( \IFS \)-invariant.
	However, in general, the space \( \spPolyQ{k} \) is not \( \IFS \)-invariant, as shown in the next example.
\end{remark}

\begin{example}
	Let \( n = 2 \), \( L = 2 \), and the matrices \( A_k \) in \cref{eq:ifs-affine} are defined as follows: \( A_0 = \rho \Id \) and \( A_1 = \rho  \, T \), where
	\[
		T = \tfrac{\sqrt{2}}{2} \begin{pmatrix}
			1 & 1 \\ 1 & -1
		\end{pmatrix}
		\quad \Longleftrightarrow \quad
		T x = \tfrac{\sqrt{2}}{2} \plr*{x_1 + x_2, x_1 - x_2}^\trans.
	\]
	We observe that \( p(x) = x_1^k x_2^k \in \spPolyQ{k} \) but \( p\plr{T x} = \frac{1}{2} \plr{x_1 + x_2}^k \plr{x_1 - x_2}^k \notin \spPolyQ{k} \).
	The space \( \OpFct\plr{\spPolyQ{k}} \) is included into \( \spPolyP[nk] \).
	This inclusion can be strict.
	Indeed, in the above case, for \( k=1 \), we have that, the polynomials \( p_{i,j}(x) = x_1^i x_2^j \), \( i, j \in \blr{0, 1} \), satisfy \( p_{i, j} \circ T \in \spPolyQ{1} \) if \( ij = 0 \), and \( p_{1,1} \circ T = \frac{1}{2}(x_1^2-x_2^2) \).
	The above computation implies that \( \OpFct\plr{\spPolyQ{k}} = \vect\blr{1, x_1, x_2, x_1 x_2, x_1^2 - x_2^2} \), which is a strict subspace of \( \spPolyP[2] \).
\end{example}

\section{Cubature based on \texorpdfstring{\( \IFS \)}{S}-invariant polynomial spaces}\label{sec:cubature-S-inv}

The key assumption of this section, unless stated otherwise, is  that the space \( \calP \) is \( \IFS \)-invariant.

\subsection{Algebraic definition of the cubature weights}

Recall that our goal is to construct a cubature rule \cref{eq:def-cubature}, which would be exact for \( p \in \calP \).
As shown in \cref{lem:exact-weight}, this condition is ensured by the unique choice  \( w_i = \int_\Gamma \Lpoly_i \di{\mu} \), for \( 1 \leq i \leq M \).
On the other hand, by \cref{pro:invariance-integral}, we can relate the integrals of the Lagrange polynomials to the integrals of their images by the Ruelle operator:
\begin{equation}\label{eq:def_weights_key}
	w_i
	= \int_\Gamma \Lpoly_i \di{\mu}
	= \int_\Gamma \OpFct\clr{\Lpoly_i} \di{\mu}.
\end{equation}
As \( \calP \) is \( \IFS \)-invariant, \( \OpFct\clr{\Lpoly_i} \in \calP \), and is thus integrated exactly by the cubature rule \( \Quad \).
In other words, the cubature weights satisfy the following identity:
\begin{equation*}
	w_i
	= \sum_{1 \leq j \leq M} w_j \OpFct\clr{ \Lpoly_i}\plr*{x_j}
	= \sum_{1 \leq j \leq M} \sum_{\ell\in \bbL} \mu_\ell \, \Lpoly_i \circ S_{\ell}\plr*{x_j} \, w_j.
\end{equation*}
In order to write the above in the algebraic form, let us define the  \( M \times M \) matrix
\begin{equation}\label{eq:def-mat-S}
	\vec{S}_{i,j} \coloneqq \sum_{\ell \in \bbL} \mu_\ell \, \Lpoly_j \circ S_\ell\plr*{x_i},
	\qquad
	\text{for} \ 1 \leq i,j \leq M.
\end{equation}
With this definition, the  above said is summarized in the following lemma, which provides \emph{an algebraic property} of the cubature weights.

\begin{lemma}\label{lem:S-inv-weight}
	Assume that \( \calP \) is \( \IFS \)-invariant and that \( \calX \) is \( \calP \)-unisolvent, then the vector \( \vec{w} \) given by \cref{eq:lagrange-weights} is an eigenvector of \( \vec{S}^\trans \),  associated to the eigenvalue \( 1 \):
	\begin{equation}\label{eq:S-inv-weight}
		\vec{S}^\trans \vec{w} = \vec{w}.
	\end{equation}
\end{lemma}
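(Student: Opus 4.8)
The plan is to simply assemble the pieces that the excerpt has already laid out, verifying that the chain of equalities leading to $w_i = \sum_j \vec{S}_{i,j} w_j$ is valid, and then rewriting this in matrix form. First I would recall that, by \cref{lem:exact-weight}, the weights defined in \cref{eq:lagrange-weights} are characterized by the property that $\Quad$ integrates every $p \in \calP$ exactly; in particular this applies to $p = \OpFct[\Lpoly_i]$, which lies in $\calP$ precisely because $\calP$ is $\IFS$-invariant and $\Lpoly_i \in \calP$. Next I would invoke \cref{pro:invariance-integral} to get $\int_\Gamma \Lpoly_i \di\mu = \int_\Gamma \OpFct[\Lpoly_i] \di\mu$, and then apply exactness of $\Quad$ on $\OpFct[\Lpoly_i]$ together with the explicit form $\OpFct[\Lpoly_i] = \sum_{\ell \in \bbL} \mu_\ell\, \Lpoly_i \circ S_\ell$ from \cref{eq:def-op-fct}. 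This yields
\[
    w_i = \Quad[\OpFct[\Lpoly_i]] = \sum_{1 \leq j \leq M} w_j \sum_{\ell \in \bbL} \mu_\ell\, \Lpoly_i \circ S_\ell(x_j).
\]

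Then I would observe that the inner double sum is exactly $\sum_j w_j \vec{S}_{j,i}$ with $\vec{S}$ as in \cref{eq:def-mat-S} (note the index placement: $\vec{S}_{j,i} = \sum_\ell \mu_\ell \Lpoly_i \circ S_\ell(x_j)$, so the coefficient of $w_j$ in the expression for $w_i$ is $\vec{S}_{j,i} = (\vec{S}^\trans)_{i,j}$). Hence $w_i = \sum_j (\vec{S}^\trans)_{i,j} w_j$ for every $i$, which is precisely the componentwise statement of $\vec{S}^\trans \vec{w} = \vec{w}$. It remains only to record that this identity expresses $\vec{w}$ as an eigenvector of $\vec{S}^\trans$ for the eigenvalue $1$, and $\vec{w} \neq 0$ since $\int_\Gamma \Lpoly_i \di\mu$ cannot all vanish (their sum is $\int_\Gamma 1 \di\mu = \mu(\Gamma) = 1$, using $\spPolyP[0] \subset \calP$ and \cref{eq:decomp-lagrange} applied to the constant $1$).

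The one point requiring genuine care — the main (minor) obstacle — is the bookkeeping of the transpose: one must be careful that the matrix $\vec{S}$ defined in \cref{eq:def-mat-S} has its indices arranged so that the recursion for $\vec{w}$ reads $\vec{S}^\trans \vec{w} = \vec{w}$ rather than $\vec{S}\vec{w} = \vec{w}$. Beyond that, everything is a direct substitution: the $\IFS$-invariance of $\calP$ guarantees $\OpFct[\Lpoly_i] \in \calP$ so that exactness of $\Quad$ applies, \cref{pro:invariance-integral} supplies the self-similarity of the integral, and the definitions of $\OpFct$ and $\vec{S}$ translate the resulting scalar identity into the claimed eigenvector equation. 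I would keep the proof to three or four displayed lines.
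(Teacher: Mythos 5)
Your proposal is correct and follows essentially the same route as the paper: the paper's argument is exactly the chain in \cref{eq:def_weights_key} — exactness of \( \Quad \) on \( \calP \) (\cref{lem:exact-weight}), the invariance identity of \cref{pro:invariance-integral}, \( \IFS \)-invariance to ensure \( \OpFct\clr{\Lpoly_i}\in\calP \), and the definition \cref{eq:def-mat-S} to read the result as \( \vec{S}^\trans\vec{w}=\vec{w} \) — with your transpose bookkeeping handled correctly. The added remark that \( \vec{w}\neq 0 \) because the weights sum to \( \mu(\Gamma)=1 \) is a harmless (and valid) extra touch not spelled out in the paper.
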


Obviously, \cref{eq:S-inv-weight} is not sufficient to characterize \( \vec{w} \in \Ker\plr{\matI - \vec{S}^\trans} \).
We must complete it by requiring that the cubature formula is exact for the constant function equal to \( 1 \), namely, using the algebraic version \cref{eq:def-quad-2} of \( \Quad \),
\begin{equation}\label{eq:normal}
	\vec{w} \cdot \vec{1} = w_1 + \cdots + w_M = 1,
	\qquad \text{where} \ \vec{1} \coloneqq \Eval(x \mapsto 1).
\end{equation}
Thus, the problem to be solved, to compute the vector \( \vec{w} \) of cubature weights writes
\begin{equation}\label{eq:pb-weights}
	\text{Find} \ \vec{w} \in \Ker\plr{\matI - \vec{S}^\trans} \ \text{satisfying \cref{eq:normal}}.
\end{equation}
By \cref{lem:S-inv-weight}, we know that this problem admits at least one solution \( \vec{w} \) given by \cref{eq:lagrange-weights}.
However, a priori it is unclear that this solution would be unique.
The answer to this question is directly linked to the spectral structure of \( \vec{S}^\trans \) and more precisely to the multiplicity of the eigenvalue \( 1 \).
Because the spectra of the matrices \( \vec{S}^\trans \) and \( \vec{S} \) (together with multiplicities) coincide, we will work with the matrix \( \vec{S} \), which appears to have a convenient structure for the analysis.

\subsection{Properties of the matrix \texorpdfstring{\( \vec{S}^\trans \)}{ST} and well-posedness of \texorpdfstring{\cref{eq:pb-weights}}{\ref{eq:pb-weights}} }

The matrix \cref{eq:def-mat-S} is strongly linked to the Ruelle operator  \( \OpFct \), which we will use to characterize its spectrum.
The first part of the following lemma will be used immediately, while the second part will be of use later.
\begin{lemma}\label{lem:opfct-eval}
	For all \( p \in \calP, \) where the space \( \calP \) is not necessarily \( \IFS \)-invariant, it holds that \( \Eval\plr*{\OpFct p} = \vec{S} \plr*{\Eval p} \).
	We also have \( \Eval_{\calP}\Interp_{\calP}\Eval\plr*{\OpFct p} = \vec{S} \plr*{\Eval p} \).
\end{lemma}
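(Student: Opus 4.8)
The plan is to verify both identities by direct computation, reducing everything to the definitions of the evaluation operator $\Eval$, the matrix $\vec S$ in \cref{eq:def-mat-S}, and the decomposition formula \cref{eq:decomp-lagrange}. The only mildly delicate point is that $\OpFct p$ need not lie in $\calP$ when $\calP$ is not $\IFS$-invariant, so one must be careful to apply \cref{eq:decomp-lagrange} only to elements of $\calP$, not to $\OpFct p$.

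For the first identity, fix $p \in \calP$ and compute the $i$-th component of $\Eval(\OpFct p)$. By definition of $\OpFct$ in \cref{eq:def-op-fct},
\[
    \plr*{\Eval\plr*{\OpFct p}}_i = \OpFct p\plr*{x_i} = \sum_{\ell \in \bbL} \mu_\ell \, p\plr*{S_\ell\plr*{x_i}}.
\]
Now $S_\ell(x_i)$ is a point in $\bbR^n$ and $p \in \calP$, so we may apply the Lagrange decomposition \cref{eq:decomp-lagrange} with $x = S_\ell(x_i)$ to get $p\plr*{S_\ell(x_i)} = \sum_{1 \leq j \leq M} p\plr*{x_j}\, \Lpoly_j\plr*{S_\ell(x_i)}$. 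Substituting and exchanging the order of summation,
\[
    \plr*{\Eval\plr*{\OpFct p}}_i = \sum_{1 \leq j \leq M} p\plr*{x_j} \sum_{\ell \in \bbL} \mu_\ell \, \Lpoly_j\plr*{S_\ell(x_i)} = \sum_{1 \leq j \leq M} \vec{S}_{i,j}\, p\plr*{x_j} = \plr*{\vec{S}\,\Eval p}_i,
\]
where the middle equality is exactly \cref{eq:def-mat-S}. Since $i$ was arbitrary, $\Eval\plr*{\OpFct p} = \vec{S}\plr*{\Eval p}$.

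For the second identity, note that $\Interp_\calP$ maps $\bbR^M$ into $\calP$, so $\Interp_\calP\Eval\plr*{\OpFct p} \in \calP$, and on $\calP$ the operator $\Eval_\calP$ is just the restriction of $\Eval$. By \cref{rem:interpolation}, $\Eval_\calP$ and $\Interp_\calP$ are mutually inverse bijections between $\calP$ and $\bbR^M$ (using $\calP$-unisolvency of $\calX$), hence $\Eval_\calP\Interp_\calP = \Id_{\bbR^M}$. Therefore $\Eval_\calP\Interp_\calP\Eval\plr*{\OpFct p} = \Eval\plr*{\OpFct p} = \vec{S}\plr*{\Eval p}$ by the first part. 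There is no real obstacle here; the only thing to keep straight is the distinction between $\Eval$ (defined on all of $\spC(\bbR^n)$, applied to $\OpFct p$) and $\Eval_\calP$ (its restriction to $\calP$, applied to the interpolant), which is why the composition $\Eval_\calP\Interp_\calP$ collapses to the identity while $\Eval\OpFct$ does not collapse to anything simpler.
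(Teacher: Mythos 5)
Your proof is correct and follows essentially the same route as the paper: both arguments rest on expanding $p\in\calP$ in the Lagrange basis via \cref{eq:decomp-lagrange}, identifying the entries of $\vec{S}$ in \cref{eq:def-mat-S} as evaluations of $\OpFct\Lpoly_j$ at the points $x_i$, and then obtaining the second identity by applying $\Eval_\calP\Interp_\calP=\Id_{\bbR^M}$ to the first. The only cosmetic difference is that you compute componentwise and expand $p$ at the points $S_\ell(x_i)$, whereas the paper expands $p$ first and uses linearity of $\OpFct$ and $\Eval$; these are the same computation written in a different order.
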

\begin{proof}
	From the definition of \( \OpFct \), see \cref{eq:def-op-fct} and the definition of \( \vec{S} \), see \cref{eq:def-mat-S}, the columns of \( \vec{S} \) are given by \( \vec{S}_{\cdot, j} = \Eval\plr{\OpFct\Lpoly_j} \), for \( 1 \leq j\leq M \).
	Next, with the Lagrange decomposition \cref{eq:decomp-lagrange} of \( p \) and by linearity of \( \Eval \), it holds that
	\begin{align*}
		\Eval\plr{\OpFct p} =
		\Eval \circ \OpFct \! \sum_{1 \leq j \leq M} p\plr*{x_j} \, \Lpoly_j
		= \sum_{1 \leq j \leq M} p\plr*{x_j} \, \Eval\plr*{\OpFct \Lpoly_j}
		= \sum_{1 \leq j \leq M} \vec{S}_{\cdot, j} \, p\plr*{x_j}.
	\end{align*}
	The first statement is obtained using the definition of \( \Eval \).
	The second statement follows by recalling that \( \Eval_{\calP}\Interp_{\calP} = \Id_{\bbR^M} \) and applying \( \Eval_{\calP}\Interp_{\calP} \) to the first identity.
\end{proof}

The \( \IFS \)-invariance of \( \calP \) implies in particular that \( \OpFct_{\calP}=\restr{\OpFct}{\calP} \in \Hom\plr{\calP} \).
Using the fact that \( {\Eval}_{\calP} \) is invertible (recall that its inverse is the interpolation operator \( \Interp_{\calP} \), see \cref{rem:interpolation}), \cref{lem:opfct-eval} implies that
\begin{equation}\label{eq:CD}
	\OpFct_{\calP} =  \Eval_{\calP}^{-1}\vec{S} \Eval_{\calP},
\end{equation}
in other words that the spectra of \( \OpFct_{\calP} \) and \( \vec{S} \) coincide.

\begin{lemma}\label{lem:S-inv-spectrum}
	The eigenvalue \( 1 \) is a simple eigenvalue of \( \vec{S} \) (or, equivalently, \( \OpFct_{\calP} \) according to \cref{eq:CD}) and all other eigenvalues are strictly less than \( 1 \) in modulus.
	Moreover, the corresponding eigenspace is given by \( \Span\blr{\vec{1}} \).
\end{lemma}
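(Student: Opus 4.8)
The plan is to transfer the spectral question from $\vec{S}$ to the Ruelle operator $\OpFct_{\calP}$ via the conjugation \cref{eq:CD}, and then exploit the structure of $\OpFct$ on polynomial spaces established in \cref{lem:loc-spectrum}. First I would note that, since $\calP$ is $\IFS$-invariant, $\OpFct_{\calP} = \restr{\OpFct}{\calP} \in \Hom(\calP)$ is well-defined, and by \cref{eq:CD} it suffices to prove the three claims for $\OpFct_{\calP}$: that $1$ is a simple eigenvalue, that every other eigenvalue has modulus $< 1$, and that the eigenspace for $1$ is $\Span\{\Eval_{\calP}(\vec 1)\} = \Span\{\vec 1\}$ (using that the constant polynomial $1$ evaluates to $\vec 1$).

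For the location of the eigenvalues, the key observation is that $\sigma(\OpFct_{\calP}) \subset \sigma(\OpFct_{\spPolyP})$: any eigenvector $p \in \calP \subset \spPolyP$ of $\OpFct_{\calP}$ is also an eigenvector of $\OpFct_{\spPolyP}$ with the same eigenvalue. Now invoke \cref{lem:loc-spectrum} (and the decomposition \cref{eq:sigmaF}): the eigenvalue $1$ of $\OpFct_{\spPolyP}$ is simple and is the unique eigenvalue of modulus $\geq 1$, since for $k \geq 1$ one has $\sigma(\OpFct_{k,k}) \subset \overline{D}(0,r_k)$ with $r_k \leq \rho_{\max}^k < 1$, while $\OpFct_{0,0} = \Id$ on $\spPolyP[0]$. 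Therefore every eigenvalue of $\OpFct_{\calP}$ other than $1$ has modulus strictly less than $1$. Since $\spPolyP[0] \subset \calP$, the constant polynomial $1$ lies in $\calP$ and satisfies $\OpFct[1] = \sum_{\ell} \mu_\ell = 1$, so $1 \in \sigma(\OpFct_{\calP})$ and $1 \in \calP$ is an associated eigenvector.

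It remains to show $1$ is simple for $\OpFct_{\calP}$, with eigenspace exactly $\Span\{1\}$. Geometric simplicity is immediate: any eigenvector $p \in \calP$ for the eigenvalue $1$ is an eigenvector of $\OpFct_{\spPolyP}$ for $1$, and by \cref{lem:loc-spectrum} the $1$-eigenspace of $\OpFct_{\spPolyP}$ has dimension $1$, hence equals $\Span\{1\}$; intersecting with $\calP$ (which contains $1$) still gives $\Span\{1\}$. For algebraic simplicity, I would argue that $1$ cannot appear in a nontrivial Jordan block of $\OpFct_{\calP}$: if it did, there would exist $q \in \calP$ with $(\OpFct_{\calP} - \Id)q = 1$. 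Decompose $q = q_0 + \tilde q$ with $q_0 \in \spPolyP[0]$ constant and $\tilde q$ in the complement spanned by $\spPolyHom{k'}$, $k' \geq 1$ (intersected appropriately); applying $\OpFct - \Id$ kills the constant part, so $(\OpFct - \Id)\tilde q = 1$, but $\OpFct - \Id$ maps the span of positive-degree homogeneous components into itself (by the upper-triangular structure \cref{eq:opfct-decomposition}), and is invertible there because none of the $\OpFct_{k,k}$ with $k \geq 1$ has $1$ as an eigenvalue — so $1$, a nonzero constant, cannot be in its range, a contradiction. Hence the algebraic multiplicity of $1$ is $1$. Translating back through $\Eval_{\calP}$, the $1$-eigenspace of $\vec{S}$ is $\Span\{\Eval_{\calP}(1)\} = \Span\{\vec 1\}$.

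The main obstacle is the algebraic (as opposed to merely geometric) simplicity: one must rule out a Jordan block, which requires the finer upper-triangular description \cref{eq:opfct-decomposition} of $\OpFct$ on $\spPolyP$ rather than just the eigenvalue bound, and a small amount of care in splitting off the constant component within the possibly irregular space $\calP$. Everything else is a direct consequence of \cref{lem:loc-spectrum} and the conjugation \cref{eq:CD}.
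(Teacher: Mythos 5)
Most of your argument parallels the paper's (reduce to \( \OpFct_{\calP} \) via \cref{eq:CD}, locate the spectrum with \cref{lem:loc-spectrum} and \cref{eq:F00}, identify the eigenspace through \( \Eval_{\calP} \)), and those parts are fine. The genuine problem is in your exclusion of a Jordan block: you claim that, by the triangular structure \cref{eq:opfct-decomposition}, \( \OpFct - \Id \) maps the span of the positive-degree homogeneous components \( \bigoplus_{k \geq 1}\spPolyHom{k} \) into itself and is invertible there, so that the constant \( 1 \) cannot lie in its range. That invariance is false for a general affine IFS: for \( p \in \spPolyHom{k} \) one has \( \OpFct p = \sum_{\ell} \mu_\ell\, p(A_\ell \cdot + b_\ell) \), whose constant term is \( \sum_\ell \mu_\ell\, p(b_\ell) \) and is generically nonzero whenever some \( b_\ell \neq 0 \) (e.g.\ for the Cantor set maps \( S_1(x)=\rho x \), \( S_2(x)=\rho x + (1-\rho) \), already \( \OpFct x \) has a nonzero constant term). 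The structure \cref{eq:opfct-decomposition} makes the \emph{filtration} spaces \( \spPolyP[k] \) invariant, not the complementary graded span, so the invertibility-on-\( V \) argument does not give that \( 1 \) is outside the range, and the contradiction does not follow as written.

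The conclusion you are after is nevertheless true and the step is easy to repair: if \( (\OpFct-\Id)\tilde q = 1 \) with \( \tilde q = \sum_{k=1}^{K}\tilde q_k \), \( \tilde q_k \in \spPolyHom{k} \), \( \tilde q_K \neq 0 \), compare the degree-\( K \) homogeneous components; the right-hand side has none, while that of the left-hand side is \( (\OpFct_{K,K}-\Id)\tilde q_K \), which forces \( \tilde q_K = 0 \) since \( 1 \notin \sigma(\OpFct_{K,K}) \) by \cref{lem:loc-spectrum} — a contradiction (and by downward induction \( \tilde q = 0 \)). Equivalently, you could follow the paper's route, which avoids the issue altogether: take the smallest \( k \) with \( \calP \subset \spPolyP[k] \), choose a complement \( W \) of \( \calP \) in \( \spPolyP[k] \); the \( \IFS \)-invariance of \( \calP \) gives a block-triangular form of \( \OpFct_k \), so the characteristic polynomial of \( \OpFct_{\calP} \) divides that of \( \OpFct_k \), whose root \( 1 \) has total multiplicity \( 1 \) (it occurs only in the one-dimensional block \( \OpFct_{0,0} \)); algebraic simplicity of \( 1 \) for \( \OpFct_{\calP} \), hence for \( \vec{S} \), follows at once, with no separate Jordan-chain argument needed.
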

\begin{proof}
	Given the \( \IFS \)-invariant and finite-dimensional space \( \calP \), we have \( \calP \subset \spPolyP[k] \) with \( k = \max_{p \in \calP} \deg p \).
	Let \( W \) be a complementing subspace of \( \calP \) in \( \spPolyP[k] \).
	Let \( \OpFct_k \in \Hom\plr{\spPolyP[k]} \) be the restriction of \( \OpFct \) to \( \spPolyP[k] \), the \( \IFS \)-invariance property means that, with respect to the decomposition \( \spPolyP[k] = \calP \oplus W \), the operator \( \OpFct_k \) has a triangular block structure
	\[
		\OpFct_k = \begin{pmatrix}
			\OpFct_{\calP \to \calP} & \OpFct_{W \to \calP}
			\\
			0                        & \OpFct_{W \to W}
		\end{pmatrix}.
	\]
	As a consequence, we have the inclusion \( \sigma\plr*{\OpFct_{\calP \to \calP}} \subset \sigma\plr*{\OpFct_k} \). On the other hand, remark that \( \sigma\plr*{\OpFct_k} = \bigcup_{0 \leq q \leq k}\sigma\plr*{\OpFct_{q,q}} \), \emph{cf.}~\cref{eq:opfct-decomposition}. The announced result about the eigenvalues is easily deduced from \cref{lem:loc-spectrum} and \cref{eq:F00}.

	It remains to show that the corresponding eigenspace is given by \( \Span\blr{\vec{1}} \). This is easily seen from \cref{eq:CD}: for \( \vec{v}\in \bbR^M \), it holds that \( \vec{S}\vec{v} = \vec{v} \) if and only if \( \OpFct_{\calP} \Eval_{\calP}^{-1}\vec{v} = \Eval_{\calP}^{-1}\vec{v} \). On the other hand, \( \OpFct_{\calP}1 = 1 \), and \( \Eval_{\calP}1 = \vec{1} \), and thus \( \vec{v} \) satisfying \( \vec{S}\vec{v} = \vec{v} \) necessarily belongs to \( \Span\blr{\vec{1}} \).
\end{proof}

\begin{remark}
	The proof of \cref{lem:S-inv-spectrum} provides a more precise characterization of the spectrum \( {\OpFct}_{\calP} \), namely,  with \( \OpFct_{k,k} \) defined in the proof of \cref{lem:loc-spectrum}, see \cref{eq:opfct-decomposition}:
	\[
		k \coloneqq \min\setst{k \in \bbN}{\calP \subset \spPolyP[k]}
		\quad \Rightarrow \quad
		\sigma({\OpFct}_{\calP}) \setminus \{1\} \subset \bigcup_{1 \leq q \leq k} \sigma\plr*{\OpFct_{q,q}}
	\]
	where each \( \sigma\plr*{\OpFct_{k,k}} \) is localized in a ``small'' ball centered at the origin (see \cref{lem:loc-spectrum}).
\end{remark}

As a consequence of \cref{lem:S-inv-spectrum}, we have the following result.

\begin{theorem}\label{thm:S-inv-well-posed}
	The problem \cref{eq:pb-weights} is well-posed and characterizes the vector \( \vec{w} \) of cubature weights.
\end{theorem}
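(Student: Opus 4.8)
The plan is to deduce \cref{thm:S-inv-well-posed} as an essentially immediate consequence of the spectral information assembled in \cref{lem:S-inv-spectrum}, combined with \cref{lem:S-inv-weight} and the normalization condition \cref{eq:normal}. The problem \cref{eq:pb-weights} asks for a vector $\vec{w} \in \Ker(\matI - \vec{S}^\trans)$ satisfying $\vec{w}\cdot\vec{1}=1$; I must show this has exactly one solution, and that it coincides with the weights \cref{eq:lagrange-weights}.

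First I would establish existence: by \cref{lem:S-inv-weight}, the vector $\vec{w}$ defined by \cref{eq:lagrange-weights} lies in $\Ker(\matI - \vec{S}^\trans)$, and by \cref{lem:exact-weight} (applied to the constant function $1 \in \spPolyP[0] \subset \calP$) it satisfies the normalization \cref{eq:normal}. Hence at least one solution exists. For uniqueness, the key point is that $\dim\Ker(\matI - \vec{S}^\trans) = 1$: since the spectra of $\vec{S}$ and $\vec{S}^\trans$ coincide with multiplicities, \cref{lem:S-inv-spectrum} tells us $1$ is a simple eigenvalue of $\vec{S}^\trans$, so $\Ker(\matI - \vec{S}^\trans)$ is one-dimensional. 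Thus the affine constraint \cref{eq:normal} selects a unique point on this line, \emph{provided} the line is not contained in the hyperplane $\{\vec{v} : \vec{v}\cdot\vec{1}=0\}$; but this is guaranteed because the particular solution $\vec{w}$ from \cref{eq:lagrange-weights} satisfies $\vec{w}\cdot\vec{1}=1 \neq 0$, so $\vec{w} \notin \vec{1}^{\perp}$, and any other element of $\Ker(\matI-\vec{S}^\trans)$ is a scalar multiple $t\vec{w}$, forcing $t\,(\vec{w}\cdot\vec{1}) = t = 1$. This gives both uniqueness and the identification of the solution with the Lagrange weights, so that the cubature \cref{eq:def-cubature} with these weights is exact on all of $\calP$ by \cref{lem:exact-weight}.

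I do not anticipate a genuine obstacle here: the theorem is a packaging result, collecting \cref{lem:S-inv-weight}, \cref{lem:exact-weight}, and \cref{lem:S-inv-spectrum}. The one point requiring a sentence of care is the passage from ``$1$ is simple for $\vec{S}$'' to ``$1$ is simple for $\vec{S}^\trans$'' — this is standard (a matrix and its transpose have the same characteristic polynomial, hence the same eigenvalues with the same algebraic multiplicities, and simplicity of an eigenvalue means its algebraic multiplicity is one), but it is worth stating explicitly since the eigenvector structure (spanned by $\vec{1}$) was computed for $\vec{S}$, not $\vec{S}^\trans$, and we only need the \emph{dimension} of the kernel of $\matI - \vec{S}^\trans$, not its explicit description. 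The remaining phrasing — that \cref{eq:pb-weights} ``characterizes'' $\vec{w}$ — then amounts to the observation that its unique solution is precisely the weight vector making $\Quad$ exact on $\calP$, which is \cref{eq:lagrange-weights} by \cref{lem:exact-weight}.
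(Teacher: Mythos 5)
Your proof is correct. The logic is sound at every step: \cref{lem:S-inv-weight} plus exactness on $\spPolyP[0]$ (via \cref{lem:exact-weight} and $\mu(\Gamma)=1$) gives a solution of \cref{eq:pb-weights}; simplicity of the eigenvalue $1$ (from \cref{lem:S-inv-spectrum}, transferred to $\vec{S}^\trans$ through equality of characteristic polynomials) gives $\dim\Ker(\matI-\vec{S}^\trans)=1$; and since that one-dimensional kernel contains the particular solution $\vec{w}$ with $\vec{w}\cdot\vec{1}=1$, the normalization pins down a unique element, which must be $\vec{w}$ itself.

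The route differs from the paper's in one structural point. The paper does not use the exact weights to certify that $\Ker(\matI-\vec{S}^\trans)$ is transversal to the hyperplane $\{\vec{v}\cdot\vec{1}=0\}$; instead it invokes a purely algebraic fact (\cref{lem:linear_algebra}: for a simple eigenvalue, left and right eigenvectors cannot be orthogonal), combined with $\vec{S}\vec{1}=\vec{1}$, to show that \emph{any} nonzero $\vec{v}$ with $\vec{S}^\trans\vec{v}=\vec{v}$ satisfies $\vec{v}\cdot\vec{1}\neq 0$. Well-posedness is thus established without any reference to the integrals defining $\vec{w}$, and only afterwards is the unique solution identified with the exact weights. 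Your argument short-circuits this by using $\vec{w}$ itself as the non-orthogonality witness, which is perfectly legitimate here and slightly shorter. What the paper's version buys is reusability: the same non-orthogonality lemma is applied again in \cref{lem:main_standard} for non-$\IFS$-invariant spaces, where the exact Lagrange weights no longer solve the algebraic problem and so cannot serve as a witness; your witness-based argument would not transfer to that setting.
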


The proof of this result relies partially on a standard linear algebra argument, which we repeat for the convenience of the reader and which will be used later in the paper.

\begin{lemma}\label{lem:linear_algebra}
	Assume that \( \vec{A}\in \bbR^{M\times M} \), and let \( \lambda\in \bbR \) be a simple eigenvalue of \( \vec{A} \) (\emph{i.e.}~its algebraic, and thus, geometric multiplicity equals to \( 1 \)).
	Then the corresponding left and right eigenvectors \( \vec{v}_{l} \) and \( \vec{v}_r \) are not orthogonal.
	\emph{I.e.} if \( \vec{v}_l,\, \vec{v}_{r}\in \bbR^M \), \( \vec{v}_l\neq 0 \), \( \vec{v}_r\neq 0 \), are such that \( \vec{v}_l^\trans \vec{A} = \lambda\vec{v}_l^\trans \) and \( \vec{A}\vec{v}_r = \lambda\vec{v}_r \), then, necessarily, \( \vec{v}_l \cdot \vec{v}_r \neq 0 \).
\end{lemma}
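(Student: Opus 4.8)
The plan is to exploit the decomposition of $\mathbb{R}^M$ induced by the simple eigenvalue $\lambda$, together with the fact that, by definition, the left eigenvector annihilates the range of $\vec{A}-\lambda\matI$. Set $\vec{N}\coloneqq\vec{A}-\lambda\matI$. Since $\lambda$ is simple, its geometric multiplicity is $1$ as well (it lies between $1$ and the algebraic multiplicity), so $\Ker\vec{N}=\Span\{\vec{v}_r\}$ and, by the rank-nullity relation, $\Dim\Img\vec{N}=M-1$. The hypothesis $\vec{v}_l^{\top}\vec{A}=\lambda\vec{v}_l^{\top}$ rewrites as $\vec{v}_l^{\top}\vec{N}=\vec{0}$, which says precisely that $\vec{v}_l$ is orthogonal to every column of $\vec{N}$, hence to all of $\Img\vec{N}$.

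The crux of the argument is to show that $\vec{v}_r\notin\Img\vec{N}$, equivalently $\Ker\vec{N}\cap\Img\vec{N}=\{\vec{0}\}$; this is the only place where the simplicity of $\lambda$ (rather than just one-dimensionality of its eigenspace) is needed, and it is precisely what rules out a hidden $2\times 2$ Jordan block that would otherwise break the statement. Suppose $\vec{x}\in\Ker\vec{N}\cap\Img\vec{N}$, say $\vec{x}=\vec{N}\vec{y}$; then $\vec{N}^2\vec{y}=\vec{N}\vec{x}=\vec{0}$, so $\vec{y}\in\Ker\vec{N}^2$. Now $\Ker\vec{N}\subseteq\Ker\vec{N}^2\subseteq\cdots\subseteq\Ker\vec{N}^M$, and $\Ker\vec{N}^M$ is the generalized eigenspace of $\lambda$, whose dimension equals the algebraic multiplicity, namely $1$; since $\Dim\Ker\vec{N}=1$ already, all these subspaces coincide, so $\vec{y}\in\Ker\vec{N}$ and $\vec{x}=\vec{N}\vec{y}=\vec{0}$. (Everything being real here, one may equivalently invoke the Jordan form of $\vec{A}$ over $\mathbb{C}$ and restrict to $\mathbb{R}$.)

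Combining the two observations finishes the proof. Because $\Span\{\vec{v}_r\}$ is a line meeting the hyperplane $\Img\vec{N}$ only at $\vec{0}$, a dimension count gives $\mathbb{R}^M=\Span\{\vec{v}_r\}\oplus\Img\vec{N}$. If $\vec{v}_l\cdot\vec{v}_r=0$ held, then $\vec{v}_l$ would be orthogonal both to $\Span\{\vec{v}_r\}$ and to $\Img\vec{N}$, hence to all of $\mathbb{R}^M$, forcing $\vec{v}_l=\vec{0}$ and contradicting $\vec{v}_l\neq\vec{0}$. Therefore $\vec{v}_l\cdot\vec{v}_r\neq 0$, as claimed. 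A more computational alternative would be to use the adjugate: $\operatorname{adj}(\lambda\matI-\vec{A})$ has rank one, equals a nonzero scalar multiple of $\vec{v}_r\vec{v}_l^{\top}$, and has trace $\chi'(\lambda)\neq 0$ where $\chi$ is the characteristic polynomial of $\vec{A}$, which again yields $\vec{v}_l\cdot\vec{v}_r\neq 0$. I expect the generalized-eigenspace step (the triviality of $\Ker\vec{N}\cap\Img\vec{N}$) to be the only non-routine point; the rest is bookkeeping.
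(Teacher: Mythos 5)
Your proof is correct. It is packaged differently from the paper's, although both arguments ultimately spend the simplicity hypothesis in the same way, namely through the absence of generalized eigenvectors for \( \lambda \). The paper argues by contradiction using the orthogonal decomposition \( \bbR^M=\Ker(\lambda\matI-\vec{A})\oplus_{\perp}\Img(\lambda\matI-\vec{A}^{\trans}) \), valid for any matrix: if \( \vec{v}_l\cdot\vec{v}_r=0 \), then (since \( \Ker(\lambda\matI-\vec{A})=\Span\{\vec{v}_r\} \) by geometric simplicity) \( \vec{v}_l\in\Img(\lambda\matI-\vec{A}^{\trans}) \), and any preimage \( \vec{y} \) is a generalized eigenvector of \( \vec{A}^{\trans} \), contradicting algebraic simplicity. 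You instead first establish the (non-orthogonal) Fitting-type splitting \( \bbR^M=\Ker\vec{N}\oplus\Img\vec{N} \) with \( \vec{N}=\vec{A}-\lambda\matI \) — this is where you use simplicity, via the stabilizing kernel chain and the fact that the generalized eigenspace has dimension equal to the algebraic multiplicity — and then observe that \( \vec{v}_l\cdot\vec{v}_r=0 \) would make \( \vec{v}_l \) orthogonal to both summands, forcing \( \vec{v}_l=0 \). So the paper manufactures a generalized eigenvector of \( \vec{A}^{\trans} \) to contradict simplicity, while you rule out generalized eigenvectors of \( \vec{A} \) up front and contradict \( \vec{v}_l\neq 0 \); the two are mirror images, with yours slightly more self-contained since the only external input is the identification of the generalized eigenspace dimension with the algebraic multiplicity (or the Jordan form, as you note). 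Your adjugate alternative, \( \operatorname{adj}(\lambda\matI-\vec{A})=c\,\vec{v}_r\vec{v}_l^{\trans} \) with \( c\neq 0 \) and trace \( \chi'(\lambda)\neq 0 \), is a genuinely different computational route and is also sound, though you only sketch it.
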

\begin{proof}
	We reason by contradiction.
	Assume that \( \vec{v}_l \cdot \vec{v}_r = 0 \).
	Next, recall the decomposition of the space  \( \bbR^M = \Ker\plr{\lambda-\vec{A}}  \oplus_{\perp} \Img\plr{\lambda-\vec{A}^\trans} \).
	Since \( \vec{v}_r \in \Ker(\lambda-\vec{A}) \), and \( \vec{v}_l \cdot \vec{v}_r = 0 \), then, necessarily, \( \vec{v}_l \in \Img(\lambda-\vec{A}^\trans) \).
	Therefore, there exists \( \vec{y} \in \bbR^M \), such that \( (\lambda-\vec{A}^\trans)\vec{y} = \vec{v}_l \).
	Remark that \( \vec{y} \notin \Span\blr{\vec{v}_l} \), and is thus a generalized eigenvector.
	This contradicts the fact that \( \lambda \) is a simple eigenvalue of \( \vec{A}^\trans \) (equivalently, of \( \vec{A} \)).
\end{proof}
\begin{proof}[Proof of \cref{thm:S-inv-well-posed}]
	Because the eigenvalues of \( \vec{S} \) and \( \vec{S}^\trans \) coincide (together with multiplicities), by  \cref{lem:S-inv-spectrum} we conclude that there exists \( \vec{v} \neq 0 \), such that \( \vec{S}^\trans\vec{v} = \vec{v} \), and the corresponding eigenvalue is simple.
	Using \cref{lem:S-inv-spectrum}, we also have that \( \vec{S}\vec{1} = \vec{1} \).
	Applying \cref{lem:linear_algebra} with \( \vec{A} = \vec{S} \), \( \lambda = 1 \) shows that, necessarily, \( \vec{v} \cdot \vec{1} \neq 0 \).

	It is easy to verify that \( \widetilde{\vec{w}} = \plr{\vec{v}\cdot \vec{1}}^{-1}\vec{v} \) is a unique solution to the problem \cref{eq:pb-weights}, which is thus well-posed.
	Since the exact cubature weights satisfy \cref{eq:pb-weights}, necessarily, \( \widetilde{\vec{w}} \) coincides with the vector \( \vec{w} \).
\end{proof}

In what follows, we will need a rewriting of \cref{thm:S-inv-well-posed}, which will serve us in a sequel (see \cref{thm:exact}).

\begin{corollary}\label{cor:reinterp_l2}
	Consider the following problem: Find \( \omega\in \calP \) such that
	\begin{align}\label{eq:pb_l2}
		\int_\Gamma \omega \, p \di\mu
		= \int_\Gamma \omega \OpFct p \di\mu,
		\ \forall p \in \calP,
		\quad \text{and} \quad
		\int_\Gamma \omega \di\mu = 1.
	\end{align}
	This problem is well-posed, and the unique solution is given by \( \omega = 1 \).
\end{corollary}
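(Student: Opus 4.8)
The plan is to reduce the problem \eqref{eq:pb_l2}, which is posed in the function space $\calP$, to the algebraic problem \eqref{eq:pb-weights} via the isomorphism $\Eval_{\calP}$, and then invoke \cref{thm:S-inv-well-posed}. First I would observe that $\omega = 1$ is indeed a solution: by \cref{pro:invariance-integral}, $\int_\Gamma p \di\mu = \int_\Gamma \OpFct p \di\mu$ for every $p\in\spC(\Gamma)$, so the first identity in \eqref{eq:pb_l2} holds with $\omega = 1$; the normalization $\int_\Gamma 1\di\mu = \mu(\Gamma) = 1$ holds since $\mu\in\spM_\Gamma^1$. So the content of the statement is uniqueness.

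For uniqueness, I would expand an arbitrary solution $\omega\in\calP$ in the Lagrange basis, $\omega = \sum_{1\le i\le M} \omega(x_i)\,\Lpoly_i$, and set $\vec{a} = \Eval(\omega) = (\omega(x_1),\ldots,\omega(x_M))$. The second constraint $\int_\Gamma\omega\di\mu = 1$ becomes, by \cref{eq:lagrange-weights}, $\sum_i \omega(x_i) w_i = \vec{a}\cdot\vec{w}$, i.e. $\vec{a}\cdot\vec{w} = 1$ where $\vec{w}$ is the exact weight vector. For the first constraint, it suffices to test against $p = \Lpoly_j$ for each $j$: the requirement $\int_\Gamma \omega\,\Lpoly_j\di\mu = \int_\Gamma \omega\,\OpFct\Lpoly_j\di\mu$. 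Here I use $\IFS$-invariance of $\calP$: since $\OpFct\Lpoly_j\in\calP$, both $\Lpoly_j$ and $\OpFct\Lpoly_j$ are integrated exactly by the cubature $\Quad$ with weights $\vec{w}$ (this is \cref{lem:exact-weight}), so $\int_\Gamma\omega\,\Lpoly_j\di\mu = \sum_i w_i\,\omega(x_i)\Lpoly_j(x_i) = w_j\,\omega(x_j)$ — wait, more carefully: $\int_\Gamma \omega\Lpoly_j\di\mu$ need not be exactly computed by $\Quad$ unless $\omega\Lpoly_j\in\calP$, which is false in general. The correct move is instead to use $\int_\Gamma\omega\,p\di\mu = \sum_i(\int_\Gamma\omega\Lpoly_i\di\mu)\,p(x_i)$ for $p\in\calP$, i.e. the functional $p\mapsto\int_\Gamma\omega p\di\mu$ on $\calP$ is represented by the vector $\vec{b}$ with $b_i = \int_\Gamma\omega\Lpoly_i\di\mu$; this is exactly $\Interp_\calP^*\omega$ in the notation of \cref{eq:iadj}. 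Then $\int_\Gamma\omega\,\OpFct p\di\mu = \vec{b}\cdot\Eval(\OpFct p) = \vec{b}\cdot\vec{S}\Eval(p)$ by \cref{lem:opfct-eval}, while $\int_\Gamma\omega\,p\di\mu = \vec{b}\cdot\Eval(p)$. Since $\Eval(p)$ ranges over all of $\bbR^M$ as $p$ ranges over $\calP$ (unisolvency), the first constraint in \eqref{eq:pb_l2} is equivalent to $\vec{b}^\trans(\matI - \vec{S}) = 0$, i.e. $\vec{S}^\trans\vec{b} = \vec{b}$.

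Thus $\vec{b}\in\Ker(\matI-\vec{S}^\trans)$, which by \cref{lem:S-inv-spectrum} (eigenvalue $1$ simple) is one-dimensional; combined with the normalization I would show $\vec{b}$ satisfies \eqref{eq:pb-weights}, so by \cref{thm:S-inv-well-posed} $\vec{b} = \vec{w}$ — i.e., $\int_\Gamma\omega\Lpoly_i\di\mu = w_i = \int_\Gamma\Lpoly_i\di\mu$ for all $i$. Since the map $\omega\mapsto(\int_\Gamma\omega\Lpoly_i\di\mu)_i$ restricted to $\calP$ is injective (it is $\Eval_\calP$ composed with the Gram matrix of the Lagrange basis in $\spL^2(\Gamma;\mu)$, which is positive definite because $\mu$ has full support $\Gamma$ and the $\Lpoly_i$ are linearly independent polynomials), we conclude $\omega = 1$. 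The main obstacle is precisely this last injectivity point: one must argue that $p\mapsto\int_\Gamma p\,q\di\mu$ for $q\in\calP$ is a nondegenerate pairing on $\calP$, which requires that $\mu$ is not supported on the zero set of any nonzero polynomial — true here since $\Supp\mu = \Gamma$ is infinite (indeed uncountable) and a nonzero polynomial vanishes only on a set of measure zero. Alternatively, one sidesteps this by noting directly that $\vec{b} = \vec{w}$ together with $\vec{a}\cdot\vec{w}=1$ forces $\omega - 1$ to have vanishing $\spL^2(\Gamma;\mu)$ inner product with every element of $\calP$ including itself (using $1\in\calP$), hence $\omega = 1$ in $\spL^2(\Gamma;\mu)$, and since both are polynomials continuous on $\Gamma$ with $\Supp\mu=\Gamma$, $\omega\equiv 1$.
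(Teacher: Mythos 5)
Your reduction is essentially the paper's: you encode the unknown \( \omega \) through the vector \( b_i=\int_\Gamma\omega\,\Lpoly_i\di{\mu} \) (which is exactly \( \Interp_{\calP}^*\omega \) from \cref{eq:iadj}), use \( \int_\Gamma\omega\,p\di{\mu}=\vec{b}\cdot\Eval p \), the \( \IFS \)-invariance of \( \calP \), \cref{lem:opfct-eval} and the surjectivity of \( \Eval_{\calP} \) to show that \cref{eq:pb_l2} is equivalent to \cref{eq:pb-weights} for \( \vec{b} \), and then conclude with \cref{thm:S-inv-well-posed}; this is precisely the equivalence (A)--(B) in the paper's proof, and your verification that \( \omega=1 \) is a solution and the handling of the normalization constraint are fine (note \( \vec{b}\cdot\vec{1}=\int_\Gamma\omega\di{\mu} \) directly, by taking \( p=1 \)).

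The one point to correct is your justification of the final injectivity step. The claim that a nonzero polynomial vanishes only on a set of \( \mu \)-measure zero is false in general for the invariant measure: \( \mu \) is singular and supported on \( \Gamma \), and \( \Gamma \) may lie inside an algebraic hypersurface (e.g.\ a Cantor set placed on the line \( \{x_2=0\} \) in \( \bbR^2 \), for which \( p(x)=x_2 \) vanishes identically on \( \Gamma \)); the uncountability of \( \Gamma \) does not help. In such a degenerate case the \( \spL^2(\Gamma;\mu) \) Gram matrix of the \( \Lpoly_i \) is only positive semidefinite, and \( \omega=1+q \) with \( q \) vanishing on \( \Gamma \) also solves \cref{eq:pb_l2}, so uniqueness \emph{as a polynomial} genuinely requires that no nonzero element of \( \calP \) vanish identically on \( \Gamma \) --- an assumption the paper itself leaves implicit when it treats \( \plr{\calP,\norm{\,\cdot\,}_{\spL^2(\Gamma;\mu)}} \) as an inner-product space and inverts \( \Interp_{\calP}^* \). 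Your alternative ending is the right fix and matches what can actually be proved without that assumption: from \( \int_\Gamma(\omega-1)p\di{\mu}=0 \) for all \( p\in\calP \), take \( p=\omega-1 \) to get \( \int_\Gamma(\omega-1)^2\di{\mu}=0 \), hence \( \omega=1 \) \( \mu \)-a.e., and by continuity together with \( \Supp\mu=\Gamma \), \( \omega\equiv1 \) on \( \Gamma \), i.e.\ uniqueness holds in \( \spL^2(\Gamma;\mu) \), which is the sense in which the corollary is used later.
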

\begin{proof}
	First remark that \( \omega = 1 \) is a solution thanks to \cref{pro:invariance-integral} and \( \mu\plr{\Gamma}  = 1 \).
	Using \( \plr{p, q} \mapsto \int_\Gamma p \, \bar{q} \di\mu \) as an inner product on \( \calP \), we have the orthogonal decomposition \( \calP = \Span\blr{1} \oplus W \), where \( W = \setst{p \in \calP}{\int_\Gamma p \di\mu = 0} \).
	For the uniqueness, let \( \omega \) be another solution to \cref{eq:pb_l2} and denote \( \omega' = \omega - 1 \).
	We have \( \int_\Gamma \omega' \, \plr{\Id - \OpFct} p \di\mu = 0 \), for all \( p \in \calP \), \emph{i.e.}~\( \omega' \perp (\Id-\OpFct)W \), and \( \int_\Gamma \omega' \di\mu = 0 \), \emph{i.e.}~\( \omega' \in W \).
	From \cref{pro:invariance-integral}, we have \( \int_\Gamma p \di\mu = \int_\Gamma \OpFct p \di\mu \) for all \( p \in \calP \), thus \( \OpFct\plr{W} \subset W \).
	The operator \( \restr{\Id - \OpFct}{W} \) is injective by \cref{cor:1-eig-simple}.
	Since it maps \( W \) into itself, \( \plr{\Id - \OpFct}W = W \), thus \( \omega' \perp W \).
	Therefore, we have \( \omega' = 0 \).
\end{proof}

\begin{remark}
	Remarkably, given two different sets of cubature points \( \calX \) and \( \mathcal{Y} \) each of them being \( \calP \)-unisolvent, the spectra of the corresponding matrices \( \vec{S}_\calX \) and \( \vec{S}_\calY \) (which differ a priori) are the same.
	Indeed, these matrices are representations of the same operator \( \OpFct_{\calP} \), \emph{cf.}~\cref{eq:CD}.
\end{remark}

\section{Cubature based on non-\texorpdfstring{\( \IFS \)}{S}-invariant polynomial spaces}\label{sec:cubature-non-S-inv}
One motivation for looking at the case of  non-\( \IFS \)-invariant polynomial spaces stems from a practical difficulty of choosing cubature points that would be unisolvent in \( \spPolyP[k] \), and possess Lebesgue's constants of mild growth, with respect to the number of points (this property is favorable for convergence of cubatures formulae, as we will see in \cref{sec:error_polynomial}).
Therefore, instead, we will look at the tensor product spaces \( \spPolyQ{k} \), which has an additional favorable property: for well-chosen unisolvent sets of points, the associated Lagrange polynomials are easy to compute using barycentric formulae, \emph{cf.}~\cite{atap}.
It appears that the spaces \( \spPolyQ{k} \) in general are not \( \IFS \)-invariant, as discussed in \cref{sec:S-inv}. Therefore, a priori, the cubature weights on the space \( \spPolyQ{k} \) no longer satisfy the algebraic property \cref{eq:pb-weights}.
This is illustrated numerically in \cref{sec:numeric}.

\subsection{An alternative definition of the cubature weights}

Let \( \calP \subset \spPolyP \) with \( \dim\calP < +\infty \) and \( \spPolyP[0] \subset \calP \).
According to \cref{lem:exact-weight}, the cubature weights \( \vec{w} \) given by \cref{eq:lagrange-weights} provide a cubature formula \( \Quad\clr{f} \) which is exact in \( \calP \).
In the previous section, under the assumption that \( \calP \) is \( \IFS \)-invariant, we have shown that the corresponding weights can be characterized as a unique solution to the purely algebraic problem \cref{eq:pb-weights}.
However, this is no longer true if the spaces \( \calP \) are not \( \IFS \)-invariant.

Our idea is to abandon the constraint imposed by the cubature formula of being exact in \( \calP \), but instead use \cref{eq:pb-weights} as a characterization of the cubature weights, in the same way as the property \cref{eq:pb-weights} characterized \textit{exact} cubature weights for \( \IFS \)-invariant spaces, see \cref{thm:S-inv-well-posed}.
More precisely, instead of the exact weights \( \vec{w} \), defined in \cref{lem:exact-weight}, we will look for  weights \( {\widetilde{\vec{w}}} \) as a solution to the following problem:
\begin{equation}\label{eq:pb-weights-approximate}
	\text{Find} \ \widetilde{\vec{w}} \in \Ker\plr{\matI - \vec{S}^\trans} \ \text{satisfying} \ \widetilde{\vec{w}}\cdot \vec{1} = 1.
\end{equation}
Recall that the constraint \( \widetilde{\vec{w}}\cdot \vec{1}=1 \) in  \cref{eq:pb-weights-approximate} ensures that \( \spPolyP[0] \) is integrated exactly. This new cubature rule, defined by
\begin{align*}
	\int_{\Gamma}f \di{\mu}
	\approx \widetilde{\Quad}[f]
	\coloneqq \sum_{1 \leq i \leq M} \widetilde{w}_i f\plr*{x_i},
\end{align*}
is consistent with the invariance property  \cref{pro:invariance-integral} of the integral, in sense of
\begin{proposition}\label{prop:qf}
	If \( \widetilde{\Quad}\clr{f} = \sum_{i=1}^M \widetilde{w}_i f\plr{x_i} \),  the following two properties are equivalent: (i) for all \( p \in \calP \), \( \widetilde{\Quad}\clr{p} = \widetilde{\Quad}\clr{\OpFct p} \) and (ii) \( \widetilde{\vec{w}}\in \Ker\plr{\matI - \vec{S}^\trans} \).
\end{proposition}
We leave the proof of this result to the reader.
Following the above discussion, we single out two important questions about the ``new'' cubature weights \cref{eq:pb-weights-approximate}:
\begin{itemize}[leftmargin=*]
	\item the well-posedness of the problem \cref{eq:pb-weights-approximate};
	\item the error committed by replacing the cubature rule \( \Quad \) with its perturbed version \( \widetilde{\Quad} \) when integrating polynomials \( p \in \calP \).
\end{itemize}

\subsection{On the well-posedness of the problem \texorpdfstring{\cref{eq:pb-weights-approximate}}{\ref{eq:pb-weights-approximate}}}

Unfortunately, we were able to show only a partial well-posedness result.
\begin{theorem}\label{lem:main_standard}
	The value \( \lambda = 1 \) is an eigenvalue of the matrix \( \vec{S}^\trans \).
	If this eigenvalue is simple, then the associated eigenvector \( \widetilde{\vec{w}} \) satisfies \( \widetilde{\vec{w}} \cdot \vec{1} \neq 0 \).
\end{theorem}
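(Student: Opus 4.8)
The plan is to reduce both claims to the linear-algebra fact already isolated in \cref{lem:linear_algebra}, after first exhibiting an explicit eigenvector.

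First I would observe that the constant function $1$ belongs to $\calP$ (since $\spPolyP[0] \subset \calP$ by the standing assumption) and that $\OpFct 1 = \sum_{\ell \in \bbL} \mu_\ell \cdot 1 = 1$ because $\abs{\vec{\mu}}_1 = 1$. Applying the first identity of \cref{lem:opfct-eval} — which, crucially, does \emph{not} require $\IFS$-invariance of $\calP$ — with $p = 1$ gives $\vec{1} = \Eval(\OpFct 1) = \vec{S}\,\Eval(1) = \vec{S}\vec{1}$. Hence $\vec{1}$ is a right eigenvector of $\vec{S}$ for the eigenvalue $1$; since $\vec{S}$ and $\vec{S}^\trans$ have the same spectrum with multiplicities, $\lambda = 1 \in \sigma(\vec{S}^\trans)$, which is the first assertion. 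In particular $\Ker(\matI - \vec{S}^\trans) \neq \{0\}$, so a nonzero $\widetilde{\vec{w}}$ as in \cref{eq:pb-weights-approximate} exists.

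For the second assertion, suppose $1$ is a simple eigenvalue of $\vec{S}^\trans$, equivalently of $\vec{S}$. Any nonzero $\widetilde{\vec{w}} \in \Ker(\matI - \vec{S}^\trans)$ satisfies $\vec{S}^\trans \widetilde{\vec{w}} = \widetilde{\vec{w}}$, i.e.\ $\widetilde{\vec{w}}^\trans \vec{S} = \widetilde{\vec{w}}^\trans$, so $\widetilde{\vec{w}}$ is a left eigenvector of $\vec{S}$ for $\lambda = 1$, while $\vec{1}$ is a right eigenvector for the same eigenvalue. Invoking \cref{lem:linear_algebra} with $\vec{A} = \vec{S}$, $\lambda = 1$, $\vec{v}_l = \widetilde{\vec{w}}$, $\vec{v}_r = \vec{1}$ yields $\widetilde{\vec{w}} \cdot \vec{1} \neq 0$, as claimed.

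There is essentially no serious obstacle in this argument: it is a short corollary of \cref{lem:opfct-eval} and \cref{lem:linear_algebra}. The genuine difficulty — and the reason the well-posedness statement is only partial — lies outside the proof: for a non-$\IFS$-invariant space $\calP$ one has no general control on whether $\lambda = 1$ is simple, which is precisely why this property must be carried as a hypothesis rather than established. Pushing further would require extra structure on the point set $\calX$ or on $\vec{\mu}$ (e.g.\ positivity of $\vec{S}$ and a Perron–Frobenius argument), but that is beyond the scope of the present statement.
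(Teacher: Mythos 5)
Your proof is correct and follows essentially the same route as the paper: establish \( \vec{S}\vec{1}=\vec{1} \) (so \( 1\in\sigma(\vec{S}^\trans) \)) and then apply \cref{lem:linear_algebra} with \( \vec{A}=\vec{S} \), \( \lambda=1 \) to get \( \widetilde{\vec{w}}\cdot\vec{1}\neq 0 \). Your explicit use of the first identity in \cref{lem:opfct-eval} with \( p=1 \) is in fact a slightly cleaner justification of \( \vec{S}\vec{1}=\vec{1} \) than the paper's cross-reference to the proof of \cref{lem:S-inv-spectrum}, since it makes clear that no \( \IFS \)-invariance of \( \calP \) is needed.
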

\begin{proof}
	The first statement follows by remarking that \( \vec{S}\vec{1} = \vec{1} \) (the result follows by the same reasoning as in the proof of the second statement of \cref{lem:S-inv-spectrum}). To prove the second statement, let us assume that \( \widetilde{\vec{w}} \neq 0 \) is such that \( \vec{S}^\trans \widetilde{\vec{w}} = \widetilde{\vec{w}} \). Applying  \cref{lem:linear_algebra} with \( \vec{A} = \vec{S} \) and \( \lambda = 1 \) implies that \( \widetilde{\vec{w}} \cdot \vec{1} \neq 0 \).
\end{proof}
According to the above result's proof, the condition that \( \lambda = 1 \) is a \emph{simple} eigenvalue ensures that \cref{eq:pb-weights-approximate} is well-posed, in particular, the constraint \( \widetilde{\vec{w}} \cdot \vec{1} = 1 \) can be satisfied. One can verify that since \( \vec{S} \) is real, \( \widetilde{\vec{w}} \) satisfying \cref{eq:pb-weights-approximate} is necessarily real, too.

\subsection{Subspaces of \texorpdfstring{\( \calP \)}{P} integrated exactly by the inexact cubature}

Since the error of the cubature is closely related to the largest polynomial space integrated exactly by the cubature, it is natural to ask a question of characterization of spaces of polynomials in \( \calP \) that are integrated exactly, \emph{i.e.}
\begin{equation}\label{defPQ}
	\calP_{\Quad} \coloneqq \setst*{ p \in \calP }{ \int_\Gamma p \di{\mu} = \widetilde{\Quad}\clr{p} } .
\end{equation}
Since the integral and \( \widetilde{\Quad} \) are linear forms, this space has co-dimension 1 (thus dimension \( \Dim\calP-1 \)) in \( \calP \) independently of the vector \( \widetilde{\vec{w}} \) (!).
A more precise characterization of the space \( \calP_{\Quad} \) is given in the theorem below.

\begin{theorem}\label{thm:exact}
	Assume that \( \vec{S}^\trans \widetilde{\vec{w}} = \widetilde{\vec{w}} \) and \( \widetilde{\vec{w}} \cdot \vec{1} = 1 \). Let \( \calP_\IFS \)  be the largest \( \IFS \)-invariant subspace of \( \calP \).
	Then the inexact cubature formula \( \widetilde{\Quad} \)  integrates polynomials in the space \( \calP_\IFS \) exactly, in other words, \( \calP_\IFS \subset \calP_{\Quad} \).
\end{theorem}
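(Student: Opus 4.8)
The plan is to reduce the claim to showing that the linear functional $p \mapsto \widetilde{\Quad}\clr{p}$ and the functional $p \mapsto \int_{\Gamma} p \di\mu$ coincide on the finite‑dimensional space $\calP_\IFS$. The mechanism is that \emph{both} functionals are invariant under composition with the Ruelle operator — for $\widetilde{\Quad}$ this is exactly the hypothesis $\vec{S}^\trans\widetilde{\vec{w}} = \widetilde{\vec{w}}$ read through \cref{lem:qf}, and for the integral it is \cref{pro:invariance-integral} — so one can iterate $\OpFct$ and pass to the limit, where the spectral gap of $\OpFct$ on $\calP_\IFS$ forces convergence to a constant, and on constants the two functionals agree by the normalizations $\mu(\Gamma)=1$ and $\widetilde{\vec{w}}\cdot\vec{1}=1$.

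Set‑up. Since $\calP_\IFS$ is $\IFS$-invariant it contains $\spPolyP[0]$ and satisfies $\OpFct(\calP_\IFS)\subset\calP_\IFS$; in particular all iterates $\OpFct^{m}p$ of a $p\in\calP_\IFS$ stay in $\calP_\IFS\subset\calP$. The proof of \cref{lem:S-inv-spectrum} used only $\IFS$-invariance of the ambient space, so it applies with $\calP_\IFS$ in the role of $\calP$: the operator $\OpFct_{\calP_\IFS}$ has $1$ as a simple eigenvalue with eigenspace $\Span\{1\}$, and every other eigenvalue has modulus $<1$. Hence $\calP_\IFS = \Span\{1\}\oplus N$ with $N$ an $\OpFct$-invariant complement on which $\OpFct$ has spectral radius $<1$, so $\OpFct^{m}\to\Pi$ on $\calP_\IFS$ as $m\to\infty$, where $\Pi$ is the projection onto $\Span\{1\}$ along $N$. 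For $p\in\calP_\IFS$ write $\Pi p = c_{p}\,1$ for a scalar $c_{p}$.

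Conclusion. Fix $p\in\calP_\IFS$. Iterating \cref{pro:invariance-integral} gives $\int_{\Gamma}\OpFct^{m}p \di\mu = \int_{\Gamma}p\di\mu$ for every $m$. On the other hand, since the hypothesis $\vec{S}^\trans\widetilde{\vec{w}}=\widetilde{\vec{w}}$ is precisely condition (ii) of \cref{lem:qf}, property (i) holds, i.e. $\widetilde{\Quad}\clr{q}=\widetilde{\Quad}\clr{\OpFct q}$ for all $q\in\calP$; applying this successively to $q=p,\OpFct p,\OpFct^{2}p,\dots$ (each of which lies in $\calP_\IFS\subset\calP$) yields $\widetilde{\Quad}\clr{\OpFct^{m}p}=\widetilde{\Quad}\clr{p}$ for every $m$. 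Both $\int_{\Gamma}\cdot\,\di\mu$ and $\widetilde{\Quad}$ are continuous linear functionals on the finite‑dimensional space $\calP_\IFS$, so letting $m\to\infty$ and using $\OpFct^{m}p\to c_{p}\,1$:
\[
  \int_{\Gamma}p\di\mu \;=\; \int_{\Gamma} c_{p}\,1 \di\mu \;=\; c_{p}\,\mu(\Gamma)\;=\;c_{p},
  \qquad
  \widetilde{\Quad}\clr{p} \;=\; \widetilde{\Quad}\clr{c_{p}\,1}\;=\;c_{p}\,(\widetilde{\vec{w}}\cdot\vec{1})\;=\;c_{p}.
\]
Therefore $\widetilde{\Quad}\clr{p}=\int_{\Gamma}p\di\mu$, i.e. $p\in\calP_{\Quad}$, and since $p\in\calP_\IFS$ was arbitrary, $\calP_\IFS\subset\calP_{\Quad}$.

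I do not expect a serious mathematical obstacle: the only delicate point is justifying the limit, which rests on the spectral gap of $\OpFct$ on $\calP_\IFS$ — and the one thing to spell out is that \cref{lem:S-inv-spectrum} (and, if one prefers the alternative packaging below, \cref{thm:S-inv-well-posed} and \cref{cor:reinterp_l2}) transfer verbatim from $\calP$ to its $\IFS$-invariant subspace $\calP_\IFS$. As an alternative to the power iteration, one may argue in one line: by non‑degeneracy of $(p,q)\mapsto\int_{\Gamma}pq\di\mu$ on $\calP_\IFS$ there is a unique $\omega\in\calP_\IFS$ with $\int_{\Gamma}\omega\,p\di\mu=\widetilde{\Quad}\clr{p}$ for all $p\in\calP_\IFS$; taking $p=1$ gives $\int_{\Gamma}\omega\di\mu=\widetilde{\vec{w}}\cdot\vec{1}=1$, while for $p\in\calP_\IFS$ one has $\OpFct p\in\calP_\IFS$ and \cref{lem:qf} gives $\int_{\Gamma}\omega\,p\di\mu=\widetilde{\Quad}\clr{p}=\widetilde{\Quad}\clr{\OpFct p}=\int_{\Gamma}\omega\,\OpFct p\di\mu$, so $\omega$ solves problem \cref{eq:pb_l2} posed on $\calP_\IFS$; \cref{cor:reinterp_l2} then forces $\omega=1$, whence $\widetilde{\Quad}\clr{p}=\int_{\Gamma}p\di\mu$ on $\calP_\IFS$.
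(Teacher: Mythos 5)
Your main argument is correct, and it takes a genuinely different route from the paper. The paper's proof dualizes the algebraic relation: it sets \( \widetilde{\omega} = \plr{\Interp_\calP^*}^{-1}\widetilde{\vec{w}} \), decomposes \( \calP = \calP_\IFS \oplus_\perp \calP_\IFS^\perp \) in \( \spL^2(\Gamma;\mu) \), restricts the test functions to \( \calP_\IFS \) (where \( \Interp_\calP\Eval\OpFct p = \OpFct p \)), identifies \( \widetilde{\omega}_\IFS = 1 \) via \cref{cor:reinterp_l2} posed on \( \calP_\IFS \), and concludes by orthogonality. You instead stay "primal": both linear functionals \( p\mapsto\widetilde{\Quad}\clr{p} \) (through \cref{lem:qf}, using crucially that \( \OpFct^m p \) remains in \( \calP_\IFS\subset\calP \)) and \( p\mapsto\int_\Gamma p\di{\mu} \) (through \cref{pro:invariance-integral}) are invariant under \( \OpFct \), and the spectral gap of \( \OpFct_{\calP_\IFS} \) forces \( \OpFct^m p \to c_p\cdot 1 \), where the two functionals agree by the normalizations \( \mu(\Gamma)=1 \) and \( \widetilde{\vec{w}}\cdot\vec{1}=1 \). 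What your route buys: it never invokes \( \plr{\Interp_\calP^*}^{-1} \) or the orthogonal complement, so it does not even implicitly require the bilinear form \( (p,q)\mapsto\int_\Gamma pq\di{\mu} \) to be non-degenerate on \( \calP \) (i.e.\ that no nonzero element of \( \calP \) vanishes identically on \( \Gamma \)) — an assumption on which the paper's proof, and your alternative one-line packaging (which is essentially the paper's argument localized to \( \calP_\IFS \)), does rely. What the paper's route buys: the explicit dual representer \( \widetilde{\omega} \) with \( \widetilde{\omega}_\IFS=1 \), a slightly stronger structural statement that reuses the machinery of \cref{cor:reinterp_l2}. One point to spell out in your write-up: the simplicity of the eigenvalue \( 1 \) for \( \OpFct_{\calP_\IFS} \) and the localization of the remaining spectrum follow from the block-triangular argument in the proof of \cref{lem:S-inv-spectrum} together with \cref{lem:loc-spectrum} applied with \( \calP_\IFS \) as the invariant space (or because the characteristic polynomial of a restriction divides that of \( \OpFct \) on \( \spPolyP[k] \)), and the eigenspace is \( \Span\{1\} \) simply because constants are fixed; you should not transfer the matrix-level statement about \( \vec{S} \) verbatim, since that would require a \( \calP_\IFS \)-unisolvent set of points.
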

\begin{proof}
	First, let us argue that we can rewrite the algebraic problem \cref{eq:pb-weights-approximate} in the variational form resembling the statement of \cref{cor:reinterp_l2}.
	Recall \cref{eq:pb-weights-approximate} satisfied by \( \widetilde{\vec{w}}\in \bbR^M \), rewritten in an equivalent form, see \cref{lem:opfct-eval}:
	\begin{align}\label{eq:sec_eq}
		\widetilde{\vec{w}}^\trans
		= \widetilde{\vec{w}}^\trans\vec{S}
		\iff \widetilde{\vec{w}} \cdot \Eval {p}
		= \widetilde{\vec{w}} \cdot \Eval_\calP \Interp_\calP \Eval \OpFct p,
		\qquad \forall p \in \calP.
	\end{align}
	Remark that in the above identity, it is the operator \( \Interp_\calP\Eval\OpFct \) that appears, rather than \( \OpFct \).
	Moreover, because of non-\( \IFS \)-invariance of \( \calP \), \( \Interp_\calP\Eval\OpFct\neq \OpFct \) on \( \calP \).
	By definition of \( \Interp_\calP^* \), we have the following correspondence between \( L^2\plr{\Gamma} \) and \( \bbR^n \) Euclidean inner-product
	\begin{align*}
		\text{for all } f, \, p \in \calP, \qquad
		\int_{\Gamma} f \, p \, \di\mu = \sum_{1\leq i\leq M} p\plr*{x_i} \int_\Gamma f \, \Lpoly_i \di{\mu}
		= \Interp_{\calP}^*f \cdot \Eval p.
	\end{align*}
	Thus, denoting \( \widetilde{\vec{w}} = \Interp_\calP^* \widetilde{\omega} \), \emph{i.e.}~\( \widetilde{\omega} \coloneqq \plr{\Interp_\calP^*}^{-1} \widetilde{\vec{w}} \), the second equality of \cref{eq:sec_eq} rewrites
	\begin{equation}\label{eq:omega_def}
		\int_{\Gamma} \widetilde{\omega} \, p \, \di\mu = \int_{\Gamma}\widetilde{\omega}\Interp_\calP\Eval\OpFct p \di{\mu}, \quad \forall p \in \calP.
	\end{equation}
	Next, let us decompose the space \( \calP \) into two subspaces orthogonal with respect to \( L^2(\Gamma) \)-inner-product: \( \calP=\calP_{\IFS}\oplus_{\perp}\calP_{\IFS}^{\perp} \).
	In particular, any \( v\in \calP \) can be written as orthogonal sum \( v=v_{\IFS}+v_{\perp} \), where \( v_{\IFS}\in \calP_{\IFS} \) and \( \int_{\Gamma} v_{\perp} \, v_{\IFS} \di{\mu} = 0 \). Similarly, we rewrite \( \widetilde{\omega}=\widetilde{\omega}_{\IFS}+\widetilde{\omega}_{\perp} \).
	Applying this decomposition to \cref{eq:omega_def}, and restricting the space of test functions to \( \calP_{\IFS} \) yields the new identity for \( \widetilde{\omega}_{\IFS} \):
	\begin{align}\label{eq:omegaIFS}
		\int_\Gamma \widetilde{\omega}_{\IFS}p \di{\mu}
		= \int_\Gamma \widetilde{\omega}_{\IFS} \OpFct p \di{\mu},
		\qquad \forall p\in \calP_{\IFS},
	\end{align}
	where we used the fact that \( \Interp_\calP \Eval\OpFct p = \OpFct p \) for all \( p\in \calP_{\IFS} \): since \( \calP_{\IFS} \) is \( \IFS \)-invariant, \( \Eval\OpFct p \in \calP \) when \( p \in {\cal P}_\IFS \).
	Moreover, from \( \widetilde{\vec{w}} \cdot \vec{1} = 1 \), and \( \spPolyP[0] \subset \calP_{\IFS} \),  we have
	\begin{align}\label{eq:omegaav}
		\int_\Gamma \widetilde{\omega}_{\IFS} \di{\mu} = 1.
	\end{align}
	In \cref{eq:omegaIFS}, \cref{eq:omegaav}, we recognize the problem of \cref{cor:reinterp_l2}, where \( \calP \) is replaced by \( \calP_{\IFS} \).
	Therefore, \( \widetilde{\omega}_{\IFS} = 1 \). Since for all \( p\in\cal{P}_{\IFS} \), it holds that
	\[
		\widetilde{\Quad}[p]
		= \widetilde{\vec{w}} \cdot \Eval p = \int_\Gamma \widetilde{\omega} p \di{\mu}
		= \int_\Gamma \widetilde{\omega}_{\IFS} p \di{\mu}
		= \int_\Gamma p \di{\mu},
	\]
	where the second equality above follows from the definition \( \widetilde{\omega} = \plr{\Interp_{\calP}^*}^{-1} \widetilde{\vec{w}} \), and the third is a consequence of the orthogonality, we conclude that \( \calP_{\IFS} \subset \calP_{\Quad} \).
\end{proof}
The above result may seem somewhat surprising: indeed, even if the matrix \( \vec{S}^\trans \) admits several eigenvectors satisfying the assumptions of \cref{thm:exact}, the cubature rule based on any such eigenvector is exact in \( \calP_{\IFS} \).

\begin{remark}
	The above result enables us to use the inexact cubature weights to compute the weights exact in the space \( \calP_{\IFS} \).
	For example, to integrate polynomials in the space \( \spPolyQ{k} \) exactly, one could have computed the inexact weights in \( \spPolyQ{nk} \).
	Such weights would have integrated the polynomials in the space \( \spPolyP[nk] \) (which is \( \IFS \)-invariant) exactly, in particular the Lagrange polynomials for \( \spPolyQ{k} \), which define the exact cubature weights.
	However, this can be potentially computationally expensive.
\end{remark}

\section{Error estimates}\label{sec:error}

In this section, we will quantify the approximation error of \( \int_\Gamma f \di{\mu} \) using the cubature rule, \( \Quad \) or \( \widetilde{\Quad} \), introduced in the previous sections.
Such estimates will be obtained from quite standard arguments for cubature's error estimates and cubature on regular sets.
We suggest two versions of approximation of \( \int_\Gamma f \di{\mu} \):
\begin{itemize}[leftmargin=*]
	\item The \( h \)-version, where, very roughly speaking, we decompose \( \Gamma \) into smaller subsets \( \mu\plr{\Gamma_j} \leq h \), and approximate \( \int_{\Gamma_j} f \di{\mu} \) by a cubature of a fixed degree \( M \).
	      Convergence will be assured by taking \( h \to 0 \).

	\item The \( p \)-version, where we approximate directly \( \int_\Gamma f \di{\mu} \) by the cubature of the degree \( M \).
	      Convergence is then assured by taking \( M \to +\infty \).
\end{itemize}
In both cases, the error analysis relies on the following (classic) estimate, which we recall here.
Let \( \calP_{\Quad} \) be defined as in \cref{defPQ}.
Then
\begin{align}\label{eq:error_cubature}
	\abs*{e[f]} \leq \inf_{p \in \calP_{\Quad}} \norm*{f-p}_{L^{\infty}(K)} \, \clr*{1 + \abs*{\vec{w}}_1},
	\quad \text{where} \quad
	e[f] \coloneqq \int_\Gamma f \di{\mu} - \Quad[f]
\end{align}
which follows from the fact that, for any \( p \in \calP_{\Quad} \), \( e[f] = \int_\Gamma (f-p) \di{\mu} - \Quad[f-p] \).

\subsection{Convergence by refining the mesh}

\subsubsection{Nested meshes on the IFS}\label{sec:h_version}

To define a mesh on \( \Gamma \), it is natural to decompose it into sub-fractals using the IFS contractive maps. Let us fix the mesh size \( h>0 \). The first, simplest decomposition, can be obtained from the following observation.
Since \( \Gamma = \OpSet\plr{\Gamma} \) is a fixed point of the Hutchinson operator \( \OpSet \), see \cref{thm:attractor-existence}, it is also a fixed point of the \( p \)-times iterated  operator \( \OpSet^p \).
This gives the following decomposition of \( \Gamma \), using \( \vec{m} = \plr{m_1, \ldots, m_p} \in \bbL^p \):
\begin{equation}\label{eq:gamma_decomp}
	\Gamma = \bigcup_{\vec{m} \in \bbL^p} \Gamma_{\vec{m}}
	\qquad \text{where}\
	\Gamma_{\vec{m}} = S_{m_1} \circ \cdots \circ S_{m_p}\plr{\Gamma}.
\end{equation}
The maximal refinement level \( p \) is chosen so that \( \Diam \Gamma_{\vec{m}}<h \),  for all  \( \vec{m}\in \bbL^p \).
Remark that in the above, a priori \( \mu(\Gamma_{\vec{m}}\cap \Gamma_{\vec{m'}})\neq 0 \) for \( \vec{m}\neq \vec{m}' \in \bbL^p \).
The use of the mesh \cref{eq:gamma_decomp} for the integral evaluation can be computationally inefficient, since the diameters of the patches \( \Gamma_{\vec{m}} \) in the above decomposition can vary significantly (in other words, some regions of \( \Gamma \) are refined too finely).
Therefore, instead we will use an approach suggested in~\cite{gibbs2023}.
Let us introduce some useful notation (with \( \vec{m} \in \bbL^p \))
\begin{align*}
	 & S_{\vec{m}} \coloneqq S_{m_1} \circ \cdots \circ S_{m_p}, \quad
	\rho_{\vec{m}} \coloneqq \rho_{m_1} \cdots \rho_{m_p}, \quad
	\mu_{\vec{m}} \coloneqq \mu_{m_1} \cdots \mu_{m_p},
	\\
	 & \mathbf{L}^p = \bbL^1 \cup \cdots \cup  \bbL^p, \quad \text{for any } p \in \bbN^* \cup \blr{\infty}.
\end{align*}
For \( p = 0 \), by convention, we define \( \bbL^0 = \blr{\emptyset} \), \( \Gamma_\emptyset = \Gamma \), \( S_\emptyset = \Id \), \( \rho_\emptyset = 1 \), and \( \mu_\emptyset = 1 \).
Given \( h > 0 \), let us define the set of multi-indices \( \calL_h \subset  \mathbf{L}^\infty \) by the following:
\begin{align}\label{eq:mdef}
	\vec{m} \in \calL_h \iff \rho_{\vec{m}} \Diam\Gamma \leq h, \quad \text{and} \quad
	\rho_{\ell}^{-1}\rho_{\vec{m}}\Diam\Gamma > h, \ \text{for some} \ \ell \in \bbL,
\end{align}
and the corresponding decomposition of \( \Gamma \) via
\begin{align}\label{eq:new_decomp}
	\Gamma = \bigcup_{\vec{m}\in\mathcal{L}_h} \Gamma_{\vec{m}}.
\end{align}

We will see below that these definitions and decompositions make sense.
Let us remark that in \cref{eq:mdef}, we used \( \rho_{\vec{m}} \Diam\Gamma \) instead of \( \Diam\Gamma_{\vec{m}} \), because for self-affine sets \( \Diam\Gamma_{\vec{m}} \) is not easily computable.
On the other hand, \( \Diam\Gamma_{\vec{m}}\leq \rho_{\vec{m}} \Diam\Gamma \).
The first condition in \cref{eq:mdef} thus ensures that \( \Diam\Gamma_{\vec{m}} < h \) for all patches \( \Gamma_{\vec{m}} \), while the second condition is a requirement of minimality of the set \( \calL_h \), in the sense that the patches \( \Gamma_{\vec{m}} \) cannot become too small.

The set \( \calL_h \) can be obtained by the following algorithm that constructs a sequence \( \calL_h^q \), with \( \calL_h^q \subset{\mathbf{L}}^q \), which is stationary after a finite number of steps. Then, \( \calL_h \) is defined as the stationary point of this algorithm.
The algorithm proceeds as follows.
Starting from \( \calL_h^0 = \blr{\emptyset} \), we deduce \( \calL_h^{q+1} \) from \( \calL_h^q \) by looking at the (possibly empty) set \( \calN_h^q = \setst{\vec{m} \in \calL_h^q}{\rho_{\vec{m}} \Diam\Gamma > h} \) and setting
\begin{align}\label{eq:Lk}
	\calL_h^{q+1} = \plr*{\calL_h^q \setminus \calN_h^q} \cup \setwt*{\plr*{m_1, \ldots, m_q, \ell}}{\plr*{m_1, \ldots, m_q} \in \calN_h^q \ \text{and} \ \ell \in \bbL}.
\end{align}
The reader will easily  verify that \emph{(1)} the stationary point
\( \calL_h \) is characterized by \cref{eq:mdef}; \emph{(2)} for all \( q \geq 0 \),  \( \Gamma = \bigcup_{\vec{m} \in \mathcal{L}_h^q}\Gamma_{\vec{m}} \) (by induction on \( k \)), which implies \cref{eq:new_decomp}.

\begin{remark}\label[remark]{remark:stop} Setting \( \rho_{\max} = \max_{\ell\in\bbL}\rho_{\ell} \), the algorithm stops after \( q_{\max} \) iterations with \( q_{\max} \leq q_* \coloneqq \min\setst{q \in \bbN}{\rho_{\max}^q \Diam\Gamma \leq h} \).
\end{remark}

\begin{remark}
	For similar IFS, it holds that \( \rho_{\min}^p h < \Diam\Gamma_{\vec{m}} \leq h \), for all \( \vec{m} \in \calL_h \), where \( \rho_{\min} = \min_{\ell\in\bbL}\rho_{\ell} \) and \( p \) the length of \( \vec{m} \).
\end{remark}

Just like in \cref{eq:gamma_decomp}, \( \Gamma_{\vec{m}} \) and \( \Gamma_{\vec{m}'} \) in \cref{eq:new_decomp} may have non-trivial intersections.
However, this decomposition defines a ``partition'' of \( \Gamma \) in the sense of the following result.

\begin{lemma}\label{lem:int-decom-h}
	For \( h > 0 \), \( f \in \spL^1\plr{\Gamma;\mu} \), and \( q \geq 0 \), we have
	\begin{equation}\label{eq:int-leaves}
		\int_\Gamma f\plr{x} \di{\mu}
		= \sum_{\vec{m} \in \calL_h^q} \mu_{\vec{m}} \int_\Gamma f \circ S_{\vec{m}} \di{\mu}
		\quad \text{and} \quad
		\sum_{\vec{m} \in \calL_h^q} \mu_{\vec{m}} = 1.
	\end{equation}
\end{lemma}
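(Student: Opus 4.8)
The plan is to prove both identities simultaneously by induction on $k$, exploiting the self-similarity of the integral from \cref{pro:invariance-integral} and the recursive construction \eqref{eq:Lk} of $\calL_h^k$. The base case $k=0$ is immediate: $\calL_h^0=\blr{\emptyset}$, $\mu_\emptyset=1$, $S_\emptyset=\Id$, so both (i) and (ii) hold trivially. For the inductive step, I would assume (i) and (ii) hold for some $k\geq 0$ and pass to $k+1$ by splitting the sum over $\calL_h^k$ into the part over $\calN_h^k$ and the part over $\calL_h^k\setminus\calN_h^k$; the latter indices are unchanged in $\calL_h^{k+1}$, while each $\vec{m}\in\calN_h^k$ is replaced by the $L$ indices $(\vec{m},\ell)$, $\ell\in\bbL$.

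The key computation is the following local refinement identity: for any $\vec{m}\in\bbL^k$ and any $g\in\spL^1(\Gamma;\mu)$,
\begin{equation*}
  \mu_{\vec{m}}\int_\Gamma g\circ S_{\vec{m}}\di{\mu}
  = \sum_{\ell\in\bbL}\mu_{(\vec{m},\ell)}\int_\Gamma g\circ S_{(\vec{m},\ell)}\di{\mu}.
\end{equation*}
This follows by applying \cref{pro:invariance-integral} to the function $f=g\circ S_{\vec{m}}$, which gives $\int_\Gamma g\circ S_{\vec{m}}\di{\mu}=\sum_{\ell\in\bbL}\mu_\ell\int_\Gamma g\circ S_{\vec{m}}\circ S_\ell\di{\mu}$, then multiplying by $\mu_{\vec{m}}$ and using $S_{\vec{m}}\circ S_\ell=S_{(\vec{m},\ell)}$ together with the multiplicativity $\mu_{\vec{m}}\,\mu_\ell=\mu_{(\vec{m},\ell)}$ from \eqref{useful_notation}. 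Taking $g=f$ and summing this identity over $\vec{m}\in\calN_h^k$, then adding the untouched terms over $\calL_h^k\setminus\calN_h^k$, yields (i) at level $k+1$ from (i) at level $k$. For (ii), one simply takes $g\equiv 1$ (equivalently, observes that $\sum_{\ell\in\bbL}\mu_{(\vec{m},\ell)}=\mu_{\vec{m}}\sum_{\ell\in\bbL}\mu_\ell=\mu_{\vec{m}}$ since $\abs{\vec{\mu}}_1=1$), which shows the total mass is preserved under the refinement step, so $\sum_{\vec{m}\in\calL_h^{k+1}}\mu_{\vec{m}}=\sum_{\vec{m}\in\calL_h^k}\mu_{\vec{m}}=1$.

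I do not anticipate a serious obstacle here; the statement is essentially bookkeeping on top of \cref{pro:invariance-integral}. The one point requiring a little care is making sure \cref{pro:invariance-integral} is applied to an admissible function: it is stated for $f\in\spC(\Gamma)$, whereas here $f\in\spL^1(\Gamma;\mu)$, so I would either note that the proof of \cref{pro:invariance-integral} (which only uses the change-of-variables formula \eqref{eq:change-of-variable}, $\OpMsr\mu=\mu$, and \cref{lem:kI}) goes through verbatim for $f\in\spL^1(\Gamma;\mu)$ with $f\circ S_\ell\in\spL^1(\Gamma;\mu)$, or invoke a density argument. Finally, the reader should be reminded that $\Gamma=\bigcup_{\vec{m}\in\calL_h^k}\Gamma_{\vec{m}}$ (already noted after \eqref{eq:Lk}) is what makes the right-hand side of (i) a genuine "partition" identity, but that fact is not actually needed for the proof of \eqref{eq:int-leaves} itself, which is purely a measure-theoretic consequence of the self-similarity.
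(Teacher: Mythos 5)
Your proposal is correct and follows essentially the same route as the paper: induction on \( k \), splitting \( \calL_h^{k+1} \) into \( \calL_h^k\setminus\calN_h^k \) and the refined indices \( (\vec{m},\ell) \), and applying \cref{pro:invariance-integral} to \( f\circ S_{\vec{m}} \) together with the multiplicativity \( \mu_{(\vec{m},\ell)}=\mu_{\vec{m}}\mu_\ell \), with (ii) obtained by taking \( f=1 \). Your extra remark about extending \cref{pro:invariance-integral} from \( \spC(\Gamma) \) to \( \spL^1(\Gamma;\mu) \) is a legitimate point of care that the paper's proof passes over silently, but it does not change the argument.
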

\begin{proof}
	We proceed by induction.
	The result is true for \( q = 0 \) (trivial); for \( q = 1 \), we remark that \( \calL_h^1 = \bbL \). Then \cref{eq:int-leaves} is direct from \cref{pro:invariance-integral}. Next, assuming \cref{eq:int-leaves}, by definition \cref{eq:Lk} of \( \calL_h^{q+1} \), it holds that
	\begin{align*}
		\sum_{\vec{m} \in \calL_h^{q+1}} \mu_{\vec{m}} \int_\Gamma f \circ S_{\vec{m}} \di{\mu}
		 & = \sum_{\vec{m} \in \calL_h^q \setminus \calN_h^q} \mu_{\vec{m}} \int_\Gamma f \circ S_{\vec{m}} \di{\mu}
		\\
		 & + \sum_{\vec{m} \in \calN_h^q} \mu_{\vec{m}} \sum_{\ell \in \bbL} \mu_\ell \int_\Gamma f \circ S_{\vec{m}} \circ S_\ell \di{\mu}.
	\end{align*}
	By \cref{pro:invariance-integral}, \( \sum_{\ell \in \bbL} \mu_\ell \int_\Gamma f \circ S_{\vec{m}} \circ S_\ell \di{\mu} = \int_\Gamma f \circ S_{\vec{m}} \di{\mu} \), thus
	\begin{align*}
		\sum_{\vec{m} \in \calL_h^{q+1}} \mu_{\vec{m}} \int_\Gamma f \circ S_{\vec{m}} \di{\mu}
		 & = \sum_{\vec{m} \in \calL_h^q \setminus \calN_h^q} \mu_{\vec{m}} \int_\Gamma f \circ S_{\vec{m}} \di{\mu}
		+ \sum_{\vec{m} \in \calN_h^q} \mu_{\vec{m}} \int_\Gamma f \circ S_{\vec{m}} \di{\mu},
		\\
		 & = \sum_{\vec{m} \in \calL_h^q} \mu_{\vec{m}} \int_\Gamma f \circ S_{\vec{m}} \di{\mu},
	\end{align*}
	which proves the first part of \cref{eq:int-leaves}.
	The second part follows from taking \( f = 1 \).
\end{proof}

\subsubsection{Mesh refinement cubature}

Let \( K \supset \Gamma \) be a compact of \( \bbR^n \), and \( \calX \coloneqq \blr{x_i}_{i=1}^M \subset K \) be a set of points that are \( \calP \)-unisolvent on \( K \).
Let us assume that the weights \( \blr{w_i}_{i=1}^M \) are such that the cubature rule \( \Quad \) is exact on \( \spPolyP[k] \subset \calP \) for some \( k \in \bbN \).
Following \cref{lem:int-decom-h}, we define a cubature rule \( \Quad_h \):
\begin{align}\label{eq:new_cub}
	\Quad_h\clr{f} \coloneqq \sum_{\vec{m} \in \calL_h} \mu_{\vec{m}} \Quad\clr*{f \circ S_{\vec{m}}}.
\end{align}
We define the sets
\begin{equation}\label{defKh}
	K_h = \bigcup_{\vec{m} \in \calL_h} S_{\vec{m}}\plr{K}
	\quad \text{and} \quad
	\quad F_h = \bigcup_{\delta \leq h} K_\delta.
\end{equation}
It can be shown that \(F_h\) is a decreasing sequence of compact sets and that  \( F_h \supset K_h \) converges to \( \Gamma \) as \( h \to 0 \), in the Hausdorff distance.
By construction, the \( h \)-cubature formula \cref{eq:new_cub} uses the values of the function \( f \) in the  set \( K_h \), rather than the whole compact \( K \) (the interest of this is discussed in \cref{rem:Kh}).
Accordingly, we have the following result, which is an extension to high order cubature of the corresponding result~\cite[Thm.~3.6]{gibbs2023}.

\begin{theorem}\label{thm:h-version}
	Let \( K \) be a convex compact set and satisfy \( \Gamma, \calX \subset K \).
	Let \( H > 0 \), \( k \in \bbN \), and let a cubature \( \Quad \) be exact on \( \spPolyP[k] \).
	For all \( f \in \spC^{k+1}\plr{F_H} \) and \( h \leq H \), we have
	\[
		\abs*{\Quad_h\clr*{f} - \int_\Gamma f \di{\mu}}
		\leq  C \, h^{k+1} \norm*{\D^{k+1} f}_{\spL^\infty\plr*{F_H}},
	\]
	where
	\( \norm{\D^{k+1} f}_{\spL^\infty\plr{F_H}} = \max_{\abs{\vec{\beta}}_1 = k+1} \norm{\partial^{\vec{\beta}} f}_{\spL^\infty\plr{F_H}} \)
	and the constant \( C \) is defined by
	\[
		C = \plr*{\frac{\Diam K}{\Diam \Gamma}}^{k+1}\plr*{\abs*{\vec{w}}_1 + 1} \sum_{\abs{\vec{\beta}} = k+1} \frac{1}{\vec{\beta}!}.
	\]
\end{theorem}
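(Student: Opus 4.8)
The plan is to combine the decomposition of $\Gamma$ from \cref{lem:int-decom-h} with a local polynomial approximation estimate on each patch $\Gamma_{\vec m}$, much as one does for composite cubature on regular domains. First I would write the error as a sum over the leaves: since $\Quad_h[f] = \sum_{\vec m \in \calL_h} \mu_{\vec m}\,\Quad[f\circ S_{\vec m}]$ and, by \cref{lem:int-decom-h}(i) with the stationary index set $\calL_h$, $\int_\Gamma f\di\mu = \sum_{\vec m\in\calL_h}\mu_{\vec m}\int_\Gamma f\circ S_{\vec m}\di\mu$, we get
\[
    \Quad_h[f] - \int_\Gamma f\di\mu
    = \sum_{\vec m\in\calL_h} \mu_{\vec m}\Bigl(\Quad[f\circ S_{\vec m}] - \int_\Gamma f\circ S_{\vec m}\di\mu\Bigr)
    = \sum_{\vec m\in\calL_h}\mu_{\vec m}\, e[f\circ S_{\vec m}].
\]
Then I would apply the generic cubature estimate \cref{eq:error_cubature} to each term: since $\Quad$ is exact on $\spPolyP[k]\subset\calP_{\Quad}$, we have $|e[f\circ S_{\vec m}]|\le (1+|\vec w|_1)\,\inf_{p\in\spPolyP[k]}\|f\circ S_{\vec m} - p\|_{\spL^\infty(K)}$.

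The key local step is to bound $\inf_{p\in\spPolyP[k]}\|f\circ S_{\vec m}-p\|_{\spL^\infty(K)}$ in terms of the $(k+1)$-st derivatives of $f$ on $S_{\vec m}(K)\subset K_h\subset K_H$. I would use the Taylor polynomial: fix a center $a\in K$ (e.g. any point), let $p$ be the degree-$k$ Taylor polynomial of $g\coloneqq f\circ S_{\vec m}$ at $a$; since $K$ is convex, the Taylor remainder gives, for $x\in K$,
\[
    |g(x)-p(x)| \le \sum_{|\vec\beta|_1=k+1}\frac{1}{\vec\beta!}\,\|\partial^{\vec\beta} g\|_{\spL^\infty(K)}\,|x-a|^{k+1}
    \le (\Diam K)^{k+1}\sum_{|\vec\beta|_1=k+1}\frac{1}{\vec\beta!}\,\|\partial^{\vec\beta} g\|_{\spL^\infty(K)}.
\]
Now $g = f\circ S_{\vec m}$ with $S_{\vec m}$ affine and $\rho_{\vec m}$-contractive (its linear part has operator norm $\le\rho_{\vec m}$), so by the chain rule $\partial^{\vec\beta}(f\circ S_{\vec m})$ is a sum of $(k{+}1)$-st derivatives of $f$ at $S_{\vec m}(x)\in K_H$ contracted against the linear part of $S_{\vec m}$, whence $\|\partial^{\vec\beta}(f\circ S_{\vec m})\|_{\spL^\infty(K)}\le \rho_{\vec m}^{k+1}\,\|\D^{k+1} f\|_{\spL^\infty(K_H)}$ (this needs care with how the multinomial derivative bounds organize; summing $\sum_{|\vec\beta|_1=k+1}\frac1{\vec\beta!}$ absorbs the combinatorial factors). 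Using the defining property of $\calL_h$, namely $\rho_{\vec m}\Diam\Gamma\le h$, we get $\rho_{\vec m}^{k+1}\le (h/\Diam\Gamma)^{k+1}$.

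Assembling: each term is bounded by $\mu_{\vec m}(1+|\vec w|_1)(\Diam K)^{k+1}(h/\Diam\Gamma)^{k+1}\bigl(\sum_{|\vec\beta|_1=k+1}\tfrac1{\vec\beta!}\bigr)\|\D^{k+1}f\|_{\spL^\infty(K_H)}$, and summing over $\vec m\in\calL_h$ uses $\sum_{\vec m\in\calL_h}\mu_{\vec m}=1$ — which follows from \cref{lem:int-decom-h}(ii) (applied with the stationary $\calL_h$, or by passing to the limit in the stationary algorithm). This yields exactly the stated constant
\[
    C = \Bigl(\frac{\Diam K}{\Diam\Gamma}\Bigr)^{k+1}\bigl(|\vec w|_1+1\bigr)\sum_{|\vec\beta|=k+1}\frac{1}{\vec\beta!}.
\]

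I expect the main obstacle to be the clean bookkeeping in the chain-rule step: verifying that the derivative bound $\|\partial^{\vec\beta}(f\circ S_{\vec m})\|_{\spL^\infty(K)}\le\rho_{\vec m}^{k+1}\|\D^{k+1}f\|_{\spL^\infty(K_H)}$ holds with the multinomial weights $1/\vec\beta!$ lining up so that the final constant is precisely $\sum_{|\vec\beta|=k+1}\frac1{\vec\beta!}$ and not something larger. One must be careful that $S_{\vec m}$ being $\rho_{\vec m}$-contractive refers to the Euclidean operator norm of its linear part $A_{m_1}\cdots A_{m_p}$, and that $S_{\vec m}(K)\subset K_h\subset K_H$ so the derivatives of $f$ are indeed evaluated where we have control; the convexity of $K$ is what makes the multivariate Taylor remainder with integral/Lagrange form valid along segments inside $K$. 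Everything else — the telescoping error decomposition, the use of $\sum\mu_{\vec m}=1$, and the generic estimate \cref{eq:error_cubature} — is routine.
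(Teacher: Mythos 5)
Your overall skeleton — the decomposition over \( \calL_h \) with \( \sum_{\vec{m}\in\calL_h}\mu_{\vec{m}}=1 \), the generic estimate \cref{eq:error_cubature} on each patch, and a Taylor argument — is exactly the paper's. The gap is in the local step. You Taylor-expand \( g=f\circ S_{\vec{m}} \) and then need the chain-rule bound \( \norm{\partial^{\vec{\beta}}(f\circ S_{\vec{m}})}_{\spL^\infty(K)}\leq\rho_{\vec{m}}^{k+1}\norm{\D^{k+1}f}_{\spL^\infty(K_H)} \), where the right-hand side is the \emph{maximum} of the mixed partials of \( f \). This inequality is false in general: contractivity of the linear part \( A_{\vec{m}} \) in the Euclidean operator norm does not control individual mixed partials of the composition with constant one. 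Already for \( k+1=1 \), \( n=2 \), take \( f(y)=y_1+y_2 \) and the \( \rho \)-contractive similarity with linear part \( A=\tfrac{\rho}{\sqrt{2}}\begin{pmatrix}1&1\\1&-1\end{pmatrix} \): then \( \partial_1(f\circ S)\equiv\rho\sqrt{2} \), while \( \max_{\abs{\vec{\gamma}}_1=1}\norm{\partial^{\vec{\gamma}}f}_\infty=1 \). In general \( \partial^{\vec{\beta}}(f\circ S_{\vec{m}})(x)=\D^{k+1}f(S_{\vec{m}}x)[A_{\vec{m}}e_{j_1},\ldots,A_{\vec{m}}e_{j_{k+1}}] \), and passing from this multilinear form to the max of mixed partials of \( f \) costs an extra dimension-dependent factor (of order \( n^{(k+1)/2}\rho_{\vec{m}}^{k+1} \)); the sum \( \sum_{\abs{\vec{\beta}}_1=k+1}1/\vec{\beta}! \) is already spent on the Taylor remainder and cannot absorb it. So your route gives the correct rate \( h^{k+1} \), but only with a dimension-dependent enlargement of \( C \), not with the stated constant, contrary to your claim that the bookkeeping closes.

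The paper's proof avoids the chain rule altogether: it Taylor-expands \( f \) itself (not \( f\circ S_{\vec{m}} \)) at the point \( z_{\vec{m}}=S_{\vec{m}}z \) with \( z\in K \) arbitrary, writes \( f=t_{\vec{m}}+r_{\vec{m}} \), and observes that, since \( S_{\vec{m}} \) is affine, \( t_{\vec{m}}\circ S_{\vec{m}}\in\spPolyP[k] \) is an admissible competitor in \cref{eq:error_cubature}. The contraction is then harvested from the monomial factor of the composed remainder, \( \abs{(S_{\vec{m}}y-S_{\vec{m}}z)^{\vec{\beta}}}\leq\abs{S_{\vec{m}}y-S_{\vec{m}}z}_2^{k+1}\leq\rho_{\vec{m}}^{k+1}(\Diam K)^{k+1}\leq(h\,\Diam K/\Diam\Gamma)^{k+1} \), while the coefficient functions \( h_{\vec{\beta}} \) involve only derivatives of \( f \) evaluated on segments inside \( S_{\vec{m}}(K)\subset K_h\subset K_H \) (convexity of \( K \), hence of its affine image). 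This yields exactly the stated \( C \). If you replace your chain-rule step by this composition trick (or are content with a larger constant), the rest of your argument is correct and coincides with the paper's.
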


\begin{remark}\label[remark]{rem:Kh} As seen from the statement of the above result, the convergence of the cubature depends on the regularity of the integrand in the set \( F_H \).
	This allows to use efficiently such an \( h \)-cubature to integrate functions that have singularities inside a convex set \( K \), but regular in a small vicinity of \( \Gamma \).
\end{remark}

\begin{proof}[Proof of \cref{thm:h-version}]
	Let \( f \in \spC^{k+1}\plr{F_H} \).
	Using \cref{lem:int-decom-h} and \cref{eq:new_cub},
	\begin{equation}\label{eq:main_bound}
		\abs*{\Quad_h\clr{f} - \int_\Gamma f \di{\mu}}
		= \abs*{\sum_{\vec{m}\in \calL_h}\mu_{\vec{m}} \clr*{\Quad\clr*{f\circ S_{\vec{m}}} -\int_\Gamma f\circ S_{\vec{m}} \di{\mu}}}
		\leq \max_{\vec{m} \in \calL_h} e_{\vec{m}}
	\end{equation}
	where
	\[
		e_{\vec{m}}
		=\abs*{\Quad\clr*{f \circ S_{\vec{m}}} - \int_\Gamma f \circ S_{\vec{m}} \di{\mu}}.
	\]
	To estimate each of the terms \( e_{\vec{m}} \), we use \cref{eq:error_cubature}, where we recall that the polynomials \( \spPolyP[k] \) are integrated exactly:
	\begin{align}\label{eq:polexp}
		\abs*{e_{\vec{m}}} \leq \inf_{p \in \spPolyP[k]} \norm*{f\circ S_{\vec{m}}-p}_{L^{\infty}(K)} \clr*{1 + \abs*{\vec{w}}_1}.
	\end{align}
	To estimate the right-hand side, we use the multivariate Taylor series expansion of \( f \) in the point \( z_{\vec{m}}={S}_{\vec{m}}z \), with arbitrary \( z\in K \). It holds that  \( f = t_{\vec{m}} + r_{\vec{m}} \), where
	\begin{align}
		t_{\vec{m}}(y)
		 & = \sum_{\abs{\vec{\alpha}}_1 \leq k} \frac{\partial^{\vec{\alpha}}f\plr*{z_{\vec{m}}}}{\vec{\alpha}!} \plr*{y - z_{\vec{m}}}^{\vec{\alpha}},
		\qquad
		r_{\vec{m}}(y)
		= \sum_{\abs{\vec{\beta}}_1 = k+1} h_{\vec{\beta}}\plr{y} \plr*{y - z_{\vec{m}}}^{\vec{\beta}},
		\label{eq:tmrm}
		\\
		h_{\vec{\beta}}(y)
		 & = \frac{k+1}{\vec{\beta}!} \int_0^1 \plr{1-t}^k D^{\vec{\beta}} f(z_{\vec{m}}+t(y-z_{\vec{m}})) \di{t}.
		\label{eq:hbeta}
	\end{align}
	Remark that \( t_{\vec{m}}\in \spPolyP[k] \), and thus \( t_{\vec{m}}\circ S_{\vec{m}}\in \spPolyP[k] \); then \cref{eq:polexp} implies that
	\begin{align}\label{eq:polexp2}
		|e_{\vec{m}}|\leq  \norm*{r_{\vec{m}}\circ S_{\vec{m}}}_{L^{\infty}(K)} \clr*{1 + \abs*{\vec{w}}_1}.
	\end{align}
	From \cref{eq:tmrm} it follows that
	\begin{equation}\label{eq:bound_tmp1}
		\norm{r_{\vec{m}} \circ S_{\vec{m}}}_{\spL^{\infty}(K)}
		\leq \sum_{\abs{\vec{\beta}}_1 = k+1} \norm*{h_{\vec{\beta}}}_{\spL^{\infty}(S_{\vec{m}}(K))} \abs*{S_{\vec{m}}y - S_{\vec{m}}z}^{k+1}_2,
	\end{equation}
	where we used the bound \( \abs{\plr{y-z_{\vec{m}}}^{\vec{\beta}}} \leq \abs{y-z_{\vec{m}}}_2^{\abs{\vec{\beta}}_1} \).
	Using \cref{eq:hbeta}, convexity of \( K \) and the fact that \( z_{\vec{m}}=S_{\vec{m}}z\in K_h \), we conclude that
	\begin{equation}\label{eq:bound_tmp2}
		\norm*{h_{\vec{\beta}}}_{\spL^\infty\plr*{S_{\vec{m}}\plr{K}}}
		\leq \frac{1}{\vec{\beta}!} \norm*{\partial^{\vec{\beta}} f}_{\spL^\infty\plr*{K_h}}
		\leq \frac{1}{\vec{\beta}!} \norm*{\D^{k+1} f}_{\spL^\infty\plr*{F_H}}.
	\end{equation}
	On the other hand,
	\begin{align}\label{eq:bound_tmp3}
		\abs*{S_{\vec{m}}y-S_{\vec{m}}z}^{k+1}_2
		\leq \rho_{\vec{m}}^{k+1} \abs{y-z}^{k+1}
		\leq \plr*{\frac{h \Diam K}{\Diam \Gamma}}^{k+1},
	\end{align}
	where we use the fact that \( \vec{m}\in \calL_h \), and thus \( \rho^{\vec{m}}\Diam \Gamma\leq h \).
	Combining \cref{eq:bound_tmp2,eq:bound_tmp3} into \cref{eq:bound_tmp1}, inserting the resulting bound into \cref{eq:polexp2} and using \cref{eq:main_bound} yields the desired result.
\end{proof}

\subsection{Convergence by increasing the cubature order}\label{sec:error_polynomial}
Alternatively, we can evaluate the cubature by computing \( \Quad[f] \) directly, without splitting \( \Gamma \) into sub-sets.
In this case the convergence is achieved by increasing the number of cubature points \( M \) (we will denote the corresponding cubature by \( \Quad^{M}\clr{f} \)).
The goal of this section is to quantify the speed of convergence with respect to \( M \).
Recall the estimate \cref{eq:error_cubature}, where we emphasize the dependence of all quantities on \( M \), by using appropriate indices:
\begin{align}\label{eq:error_cubature_recall}
	\abs*{e^M[f]}
	\leq \inf_{p \in \calP_{\Quad}^{M}} \norm*{f-p}_{\infty} \clr*{1 + \abs*{\vec{w}^M}_1}.
\end{align}
Unlike in the previous section, both factors  in the above estimate will play an important role, since, in general, \( \abs{\vec{w}^M}_1 \) can be unbounded as \( M\to +\infty \).
We will start our discussion with an estimate on \( \abs{\vec{w}^M}_1 \).
Next, we present convergence estimates for \( \spC^k \)-functions in \cref{thm:approx_error} and finally estimates for analytic functions in \cref{thm:approx_error2}.

Many of the estimates will be made more explicit for the tensor-product cubature rule satisfying the following assumption.

\begin{assumption}\label[assumption]{asm:quad_tensor} Assume that \( K = \Pi_{\vec{a}, \vec{b}} \supset \Gamma \), where \( \Pi_{\vec{a}, \vec{b}} =\icc{a_1}{b_1} \times \cdots \times \icc{a_n}{b_n} \).
	Let additionally \( \calX =  \calX_1^N  \times \cdots \times \calX_n^N \), where \( \calX_i^N \) are \( (N+1) \) Chebyshev (first or second kind) quadrature nodes on \( \icc{a_i}{b_i} \)~\cite[Chapter 2 and Exercise 2.4]{atap}.
	The corresponding unisolvent polynomial space is the tensor product space \( {\cal P} = \spPolyQ{N} \) with \( M = \plr{N+1}^n \).
	We denote by \( \vec{w}_{C}^{N,n} \) and  \( \Quad_C^{N,n} \) (the index \( C \) stands for ``Chebyshev'') the respective cubature weights and cubature rule, computed with  the method  of this paper.
\end{assumption}

\begin{remark}
	Remark that the cubature points depend on the choice of the compact \( K \), \emph{i.e.} \( K = \Pi_{\vec{a}, \vec{b}} \).
	We choose the Chebyshev points in particular because, for this case, we are able to obtain the most optimal error estimates (this is related to the behavior of the bounds on \( \abs{\vec{w}^M}_1 \), which we connect in \cref{sec:vecw} to the Lebesgue constants of the points \( \calX_i^N \), and that behave asymptotically optimally for Chebyshev points).
\end{remark}

\subsubsection{An estimate on \texorpdfstring{\( \abs{\vec{w}^M}_1 \)}{w1} for \texorpdfstring{\( \IFS \)}{S}-invariant spaces}\label{sec:vecw}

In this case, the weights provided with our algebraic approach satisfy \( w_i^M = \int_\Gamma \Lpoly_i \di{\mu} \) (\emph{cf.}~\cref{sec:cubature-S-inv}) and \cref{thm:S-inv-well-posed}.
These weights may not be positive (see the experiments in \cref{sec:numeric_weights}), and therefore, even though they sum to \( 1 \), \( \abs{\vec{w}^M}_1 \) can still be unbounded as \( M \to +\infty \).
Nonetheless, this quantity grows very mildly, provided a suitable choice of the cubature points is made.
This is summarized in the following result, whose proof is left to the reader.

\begin{lemma}\label{lem:exact}
	The exact weights \( w_j^M = \int_\Gamma \Lpoly_j \di{\mu} \) satisfy the following bound:
	\begin{align*}
		\abs{\vec{w}^M}_1
		= \sum_{1 \leq j \leq M} \abs{w_j^M}
		\leq \Lambda_{\calX}^1(\Gamma) \coloneqq \sup_{x\in \Gamma} \sum_{1 \leq i \leq M} \abs*{\Lpoly_i(x)},
	\end{align*}
	where \( \Lambda_{\calX}^1(\Gamma) \) is the Lebesgue's constant for \( \calX \).
\end{lemma}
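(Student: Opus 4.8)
\textbf{Proof proposal for \cref{lem:exact}.}
The plan is a one-line estimate built from the triangle inequality and the fact that $\mu$ is a probability measure supported on $\Gamma$. First I would start from the definition of the exact weights, $w_j^M = \int_\Gamma \Lpoly_j \di{\mu}$, and apply the triangle inequality for integrals to each one individually: $\abs{w_j^M} \leq \int_\Gamma \abs{\Lpoly_j} \di{\mu}$. Summing over $j$ and exchanging the (finite) sum with the integral gives $\abs{\vec{w}^M}_1 \leq \int_\Gamma \bigl(\sum_{1\leq j\leq M}\abs{\Lpoly_j(x)}\bigr) \di{\mu}$.

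Next I would bound the integrand pointwise by its supremum over $\Gamma$, namely $\sum_{1\leq j\leq M}\abs{\Lpoly_j(x)} \leq \Lambda_{\calX}^1(\Gamma)$ for all $x\in\Gamma$, and integrate: since $\mu\in\spM_\Gamma^1$, i.e.\ $\mu(\Gamma)=1$, the right-hand side is exactly $\Lambda_{\calX}^1(\Gamma)$. (Here one uses \cref{thm:inv_measure}, which guarantees $\Supp\mu=\Gamma$, so that the supremum only needs to be taken over $\Gamma$ rather than over $K$; this is what makes the bound involve $\Lambda_{\calX}^1(\Gamma)$ and not the a priori larger $\Lambda_{\calX}^1(K)$.) This yields the claimed inequality.

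There is essentially no obstacle here: the only point requiring a word of justification is the measurability/integrability of the $\Lpoly_j$, which is immediate since they are polynomials, hence continuous on the compact $\Gamma$, and the interchange of the finite sum with the integral, which needs no hypothesis. The statement is therefore a direct consequence of $\mu$ being a probability measure on $\Gamma$, consistent with the paper's remark that the proof is left to the reader.
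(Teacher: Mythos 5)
Your proof is correct and is exactly the standard argument the paper intends by leaving the proof to the reader: triangle inequality on each weight, interchange of the finite sum with the integral, pointwise bound by the supremum over $\Gamma$, and $\mu(\Gamma)=1$. One small remark: invoking $\Supp\mu=\Gamma$ is not needed, since the weights are by definition integrals over $\Gamma$ with respect to $\mu\in\spM_\Gamma^1$, so the pointwise bound on $\Gamma$ already suffices.
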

In particular, for cubature based on \cref{asm:quad_tensor}, we have the following bound.
\begin{proposition}\label{prop:weight_tensor}
	For the cubature as in \cref{asm:quad_tensor}, there exists \( C(n)>0 \), \emph{s.t.}
	\begin{equation*}
		\big|\vec{w}_C^{N,n}\big|_1 \leq C(n) \log^n (N+1), \quad \text{ for all }N\geq 1.
	\end{equation*}
\end{proposition}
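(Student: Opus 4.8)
The plan is to reduce the bound on $\abs{\vec{w}_C^{N,n}}_1$ to the tensor-product structure of the Lebesgue constant, which for Chebyshev nodes on an interval grows only logarithmically. By \cref{lem:exact}, it suffices to bound the Lebesgue constant $\Lambda_{\calX}^1(\Gamma)$ for the full node set $\calX = \calX_1^N \times \cdots \times \calX_n^N$. In fact, since $\Gamma \subset K = \Pi_{\vec{a},\vec{b}}$, we have $\Lambda_{\calX}^1(\Gamma) \leq \Lambda_{\calX}^1(K) = \sup_{x\in K}\sum_{i}\abs{\Lpoly_i(x)}$, so we may work on the full box $K$ and forget about $\Gamma$ entirely.

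The key step is the classical factorization of tensor-product Lagrange polynomials. Writing the multi-index $i \leftrightarrow (i_1,\dots,i_n)$ with $i_j \in \{0,\dots,N\}$, the Lagrange basis function for $\spPolyQ{N}$ associated with the grid point $(x_{1,i_1},\dots,x_{n,i_n})$ factors as $\Lpoly_i(x) = \prod_{j=1}^n \ell_{i_j}^{(j)}(x_j)$, where $\ell_{i_j}^{(j)}$ is the one-dimensional Lagrange polynomial on $\calX_j^N \subset [a_j,b_j]$. Hence, for any $x = (x_1,\dots,x_n) \in K$,
\[
    \sum_{i} \abs{\Lpoly_i(x)}
    = \sum_{i_1,\dots,i_n} \prod_{j=1}^n \abs{\ell_{i_j}^{(j)}(x_j)}
    = \prod_{j=1}^n \Big( \sum_{i_j=0}^N \abs{\ell_{i_j}^{(j)}(x_j)} \Big)
    \leq \prod_{j=1}^n \lambda_N^{(j)},
\]
where $\lambda_N^{(j)} = \sup_{t\in[a_j,b_j]} \sum_{i_j} \abs{\ell_{i_j}^{(j)}(t)}$ is the one-dimensional Lebesgue constant on $[a_j,b_j]$. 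Since a Lebesgue constant is invariant under affine rescaling of the interval, $\lambda_N^{(j)}$ equals the Lebesgue constant for $(N+1)$ Chebyshev nodes (of the given kind) on $[-1,1]$. The classical estimate — see, e.g., \cite[Thm.~15.2]{atap} for second-kind points and the analogous bound for first-kind points — gives $\lambda_N^{(j)} \leq c\,(2 + \log(N+1)) \leq \tilde c \log(N+1)$ for $N\geq 1$ with a universal constant. Taking the product over $j=1,\dots,n$ yields $\Lambda_{\calX}^1(K) \leq (\tilde c)^n \log^n(N+1)$, and setting $C(n) = (\tilde c)^n$ finishes the proof via \cref{lem:exact}.

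The only genuinely nontrivial ingredient is the one-dimensional logarithmic bound on the Chebyshev Lebesgue constant, which is a standard and well-documented fact; everything else is the elementary tensor-product factorization and affine invariance, so I do not expect any real obstacle. One small point to be careful about is that \cref{assump:quad_tensor} allows either first- or second-kind Chebyshev nodes, so the cited one-dimensional bound must be invoked in a form (or two forms) covering both cases; the $\log(N+1)$ growth rate is the same for both, so the stated conclusion is unaffected.
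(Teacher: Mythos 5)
Your proposal is correct and follows essentially the same route as the paper's proof: apply \cref{lem:exact}, bound the Lebesgue constant on \( \Gamma \) by the one on \( K \), factor the tensor-product Lagrange basis to get the product of univariate Lebesgue constants, and invoke the classical logarithmic bound for Chebyshev points (the paper cites \cite{brutman97}). Your extra remarks on affine invariance and on covering both first- and second-kind nodes are just explicit versions of details the paper leaves implicit.
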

\begin{proof}
	We use the bound of \cref{lem:exact}.
	In our setting Lagrange polynomials are tensor products of univariate Lagrange polynomials, thus
	\begin{align*}
		\Lambda_{\calX}^\Gamma
		\leq \Lambda_{\calX}^K
		\leq \prod_{1 \leq i \leq n} \Lambda_{\calX_i}^{\icc*{a_i}{b_i}},
	\end{align*}
	where \( \Lambda_{\calX_i}^{\icc{a_i}{b_i}} \) is a Lebesgue constant for interpolation in \( \calX_i \) on \( \icc{a_i}{b_i} \), \emph{cf.}~\cite[Ch.~15]{atap}.
	It remains to apply to the above the exact asymptotic of Lebesgue's constant of the Chebyshev points, \emph{cf.}~\cite{brutman97} and references therein.
\end{proof}

\subsubsection{Error analysis for \texorpdfstring{\( \spC^k \)}{Ck}-functions}

The goal of this section is to provide error estimates  for the cubature rule \( Q^M[f] \), provided that \( f\in \spC^k({K}) \). Recall the error bound \cref{eq:error_cubature_recall}.  To obtain convergence estimates, it remains to quantify
\begin{align}\label{eq:error_cubature_recall_0}
	\abs*{e^M_{\calP}[f]} \coloneqq \inf_{p \in \calP_{\Quad}^M} \norm{f-p}_{\spL^\infty(K)},
\end{align}
where \( \calP_{\Quad}^M=\calP_{\Quad} \) is defined in \cref{defPQ}.
The following estimate is fairly well-known.

\begin{theorem}\label{thm:approx_error}
	Let \( K \) be a convex compact set in \( \bbR^n \).
	If \( f \in \spC^k({K}) \), and if \( \spPolyP[N] \subset \calP_{\Quad}^M \), for some \( N\geq 1 \), then there exists a constant \( C(K,n,k) \) independent of \( f \) and \( N \), such that
	\begin{equation}\label{eq:EP0}
		\abs*{e^M_{\calP}[f]}
		\leq \frac{C(K,n,k)}{N^k} \sum_{\abs{\vec{\alpha}}_1 = k} \norm*{\partial^{\vec{\alpha}} f}_{\spL^\infty(K)}.
	\end{equation}
\end{theorem}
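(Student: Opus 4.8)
The plan is to reduce this multivariate polynomial approximation bound to a standard Jackson-type estimate, exploiting the hypothesis $\spPolyP[N]\subset\calP_{\mathcal{Q}}^M$, which implies
\[
\abs*{e^M_{\calP}[f]}=\inf_{p\in\calP_{\mathcal{Q}}^M}\norm{f-p}_{\spL^\infty(K)}\leq\inf_{p\in\spPolyP[N]}\norm{f-p}_{\spL^\infty(K)}.
\]
So it suffices to bound the best approximation of $f\in\spC^k(K)$ by total-degree-$N$ polynomials on a convex compact $K\subset\bbR^n$. First I would cover $K$ by a ball (or invoke that $K$, being convex compact, is contained in some cube $Q\supset K$), and recall that such an $f$ extends to a $\spC^k$ function on a neighbourhood with controlled norm (Whitney/Stein extension, or more elementarily, just work on $Q$ after a Stein extension), so that we may assume without loss of generality that $f$ is $\spC^k$ on a fixed cube.

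Next I would invoke the classical multivariate Jackson theorem: for $f\in\spC^k(Q)$ there is $p\in\spPolyP[N]$ with
\[
\norm{f-p}_{\spL^\infty(Q)}\leq\frac{C(Q,n,k)}{N^k}\,\omega\!\Big(\operatorname{D}^k f;\tfrac1N\Big)
\leq\frac{C(Q,n,k)}{N^k}\sum_{\abs{\vec{\alpha}}_1=k}\norm{\partial^{\vec{\alpha}}f}_{\spL^\infty(Q)},
\]
where $\omega$ denotes a modulus of continuity and the last inequality just bounds it crudely by twice the sup-norm of the $k$-th derivatives (which is all we need since only $N^{-k}$ decay is claimed, not $N^{-k}\omega(1/N)$). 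This is, for instance, a consequence of tensorizing the univariate Jackson estimate together with a dimension reduction, or can be cited directly (e.g. from approximation-theory references on simultaneous/multivariate polynomial approximation). Restricting the inequality from $Q$ to $K$ and absorbing the extension-operator norm into the constant $C(K,n,k)$ gives \eqref{eq:EP0}.

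The main obstacle — really the only non-bookkeeping point — is locating a clean citable form of the multivariate Jackson inequality on a convex body with the constant depending only on $(K,n,k)$ and not on $f$ or $N$; the convexity of $K$ is what makes this uniform (it guarantees a good polynomial-approximation-friendly geometry, e.g. an interior cone condition and boundedness of the $\spC^k$ extension constant). An alternative, fully self-contained route avoiding any external citation: embed $K$ in a cube $Q$, Stein-extend $f$ to $\spC^k(\bbR^n)$, mollify/truncate and use a tensor-product Chebyshev (or Bernstein-type) polynomial approximation on $Q$ whose error is controlled term-by-term by one-dimensional Jackson estimates applied in each coordinate; this yields exactly the stated $N^{-k}\sum_{\abs{\vec\alpha}_1=k}\norm{\partial^{\vec\alpha}f}_{\spL^\infty}$ bound after collecting constants. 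Either way the argument is short once the univariate Jackson estimate is in hand; I would present the citation-based version for brevity and relegate the tensor-product construction to a remark.
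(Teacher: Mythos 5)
Your proposal is correct and follows essentially the same route as the paper: the paper's proof is precisely the citation-based version, invoking a multivariate Jackson theorem (\cite[Thm.~4.10]{schultz}, stated for \( \spPolyQ{N} \) on regular sets such as the convex compact \( K \), see also \cite[Thm.~2]{bagby_bos_levenberg}) after the same reduction \( \abs{e^M_{\calP}[f]} \leq \inf_{p\in\spPolyP[N]}\norm{f-p}_{\spL^\infty(K)} \), and it passes from \( \spPolyQ{} \) to total degree via \( \spPolyQ{\lfloor N/n\rfloor}\subset\spPolyP[N] \). Since the cited theorem applies directly on \( K \), your preliminary extension-to-a-cube step (and the tensor-product fallback, which by itself only yields a \( \spPolyQ{N} \) approximant and whose extension step would also bring in lower-order derivative norms) is unnecessary.
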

\begin{proof}
	Please see the multivariate version of Jackson's theorem as stated in~\cite[Thm.~4.10]{schultz} (see p. 168 of~\cite{schultz} for the justification that \( \operatorname{int} K \) is a regular set, as required by~\cite[Thm.~4.10]{schultz}), \emph{cf.}~as well~\cite[Thm.~2]{bagby_bos_levenberg} for a similar result.
	In the first reference, the result is formulated for \( \spPolyQ{N} \) instead of \( \spPolyP[N] \); nonetheless, it is still valid for \( \spPolyP[N] \) (with a different constant compared to~\cite[Thm.~4.10]{schultz}), since \( \spPolyQ{\lfloor N / n\rfloor}\subset \spPolyP[N] \).
\end{proof}

Combining the estimates of \cref{thm:approx_error} and \cref{lem:exact} yields the following result.

\begin{theorem}\label{thm:approx_error2}
	Suppose that \cref{asm:quad_tensor} holds true.
	Assume that \( f \in \spC^k({K}) \).
	Then, there exists \( C(f, K, n, k) > 0 \), such that for all \( N \geq 1 \),
	\begin{equation}\label{estimate-invariant}
		\abs*{ Q_C^{N,n}[f]-\int_{\Gamma}f \di{\mu} }
		\leq C(f, K, n, k) \plr*{1 + \abs*{\vec{w}_C^{N,n}}_1} N^{-k}.
	\end{equation}
	In the case when \( \calP^{N,n}_C \) are \( \IFS \)-invariant for all \( N \geq 0 \), the above estimate yields
	\begin{equation}\label{estimate-noninvariant}
		\abs*{ Q_C^{N,n}[f]-\int_{\Gamma}f \di{\mu} }
		\leq c \cdot C(f, K, n, k) \log^n (N+1) N^{-k}, \quad \text{ with some }c>0.
	\end{equation}
\end{theorem}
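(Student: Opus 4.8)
The plan is to combine the abstract cubature error bound \eqref{eq:error_cubature_recall} with the two ingredients already assembled: the Jackson-type approximation estimate of \cref{thm:approx_error} and the Lebesgue-constant bound of \cref{lem:exact} (made explicit in \cref{prop:weight_tensor}). First I would fix the cubature rule as in \cref{assump:quad_tensor}, so that \( \calX = \calX_1^N \times \cdots \times \calX_n^N \) is a tensor grid of Chebyshev nodes, \( \calP = \spPolyQ{N} \), and \( M = (N+1)^n \). Since \( \spPolyP[0] \subset \calP_{\Quad}^M \) always holds, and moreover the inexact cubature is exact on the largest \( \IFS \)-invariant subspace of \( \calP \) by \cref{thm:exact}, I need to identify which \( \spPolyP[N'] \) sits inside \( \calP_{\Quad}^M \). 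For the \( \IFS \)-invariant case this is immediate (\( \calP = \calP_{\Quad}^M \oplus \) one dimension, and \( \spPolyP[N] \subset \spPolyQ{N} \) is \( \IFS \)-invariant, hence a subspace of what is integrated exactly up to the codimension-one obstruction — more carefully, the \emph{exact} weights integrate all of \( \spPolyP[N] \)); in the general non-invariant case one uses that \( \spPolyP[\lfloor N/n\rfloor] \) is \( \IFS \)-invariant and contained in \( \spPolyQ{N} \), hence in \( \calP_{\IFS} \subset \calP_{\Quad}^M \) by \cref{thm:exact}.

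Next I would apply \eqref{eq:error_cubature_recall}: with \( N' \) the largest integer such that \( \spPolyP[N'] \subset \calP_{\Quad}^M \) (so \( N' = N \) in the invariant case, \( N' \geq \lfloor N/n\rfloor \) in general), \cref{thm:approx_error} gives
\[
    \inf_{p \in \calP_{\Quad}^M} \norm{f - p}_{\spL^\infty(K)}
    \leq \frac{C(K,n,k)}{(N')^k} \sum_{\abs{\vec{\alpha}}_1 = k} \norm{\partial^{\vec{\alpha}} f}_{\spL^\infty(K)},
\]
valid for \( f \in \spC^k(K) \) and \( N' \geq 1 \). Absorbing the \( f \)-dependent sum of derivative norms and the factor \( n^k \) (coming from \( N' \geq \lfloor N/n\rfloor \geq N/(2n) \) for \( N \geq n \), with the finitely many small \( N \) handled by adjusting the constant) into a single constant \( C(f,K,n,k) \), and plugging into \eqref{eq:error_cubature_recall}, yields
\[
    \Big| Q_C^{N,n}[f] - \int_\Gamma f \di{\mu} \Big|
    \leq C(f,K,n,k)\, N^{-k} \big( 1 + \big|\vec{w}_C^{N,n}\big|_1 \big),
\]
which is \eqref{estimate-invariant}. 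Then, under the additional hypothesis that the spaces \( \calP_C^{N,n} = \spPolyQ{N} \) are \( \IFS \)-invariant for every \( N \), the weights are the exact ones \( w_j^M = \int_\Gamma \Lpoly_j \di{\mu} \) by \cref{thm:S-inv-well-posed}, so \cref{prop:weight_tensor} applies and gives \( \big|\vec{w}_C^{N,n}\big|_1 \leq C(n) \log^n(N+1) \); substituting this bound and merging constants produces \eqref{estimate-noninvariant}.

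The main obstacle is bookkeeping rather than conceptual: one must be careful about \emph{which} polynomial space is genuinely integrated exactly by the \emph{inexact} cubature and how its degree relates to \( N \). \cref{thm:exact} only guarantees exactness on \( \calP_{\IFS} \), the largest \( \IFS \)-invariant subspace of \( \spPolyQ{N} \); identifying a full total-degree space \( \spPolyP[N'] \) inside \( \calP_{\IFS} \) with \( N' \) comparable to \( N \) — and tracking that the resulting dimensional constant \( n^k \) and the finite exceptional range \( N < n \) are harmless — is the one place where care is needed. Everything else is a direct citation of \cref{thm:approx_error}, \cref{lem:exact}, \cref{prop:weight_tensor}, and the elementary bound \eqref{eq:error_cubature_recall}.
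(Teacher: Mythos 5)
Your proposal is correct and follows essentially the same route as the paper: the elementary bound \eqref{eq:error_cubature_recall} combined with \cref{thm:approx_error}, plus \cref{prop:weight_tensor} (legitimately invoked via \cref{thm:S-inv-well-posed}, which identifies the computed weights with the exact ones) in the \(\IFS\)-invariant case. The only superfluous step is the detour through \( \spPolyP[\lfloor N/n\rfloor] \) in the non-invariant case: since \( \spPolyP[N] \subset \spPolyQ{N} \) (total degree \( \leq N \) forces each partial degree \( \leq N \)) and \( \spPolyP[N] \) is always \( \IFS \)-invariant, \cref{thm:exact} already gives \( \spPolyP[N] \subset \calP_{\Quad}^M \), so \cref{thm:approx_error} applies with degree \( N \) itself and neither the \( n^k \) absorption nor the exceptional range \( N < n \) is needed; this is harmless here (the constant may depend on \( n,k \)), but the \( \lfloor N/n\rfloor \) reduction, via \( \spPolyQ{\lfloor N/n\rfloor} \subset \spPolyP[N] \), is genuinely required only in \cref{thm:error_analytic}, where \cref{prop:error_f} demands a tensor-product space.
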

\begin{proof}
	Both results follow from the expression \cref{eq:error_cubature_recall} combined with \cref{thm:approx_error}.
	The estimate in the \( \IFS \)-invariant case is a corollary of \cref{prop:weight_tensor}.
\end{proof}

Compared to the best approximation error estimates obtained for the tensor-product polynomials approximation of smooth functions in the domains \( \Pi_{i=1}^n \icc{a_i}{b_i} \), see~\cite{schultz}, we see that in the \( \IFS \)-invariant case our estimates are worse by a factor of \( \log^n (N+1) \), due to a potential growth of cubature weights.

\subsubsection{Error analysis for analytic functions}

In this section, we fully restrict our attention to cubatures satisfying \cref{asm:quad_tensor}.
We will also work with analytic functions.
This is of interest when evaluating regular integrals in boundary element methods, \emph{e.g.}~\( \int_{\Gamma_{\vec{m}}} \int_{\Gamma_{\vec{n}}} G(x,y) \di{\mu}_x \di{\mu}_y \), where \( G \) is the fundamental solution of the Helmholtz equation.
The ideas and definitions that we present here are now standard, and we follow the exposition in~\cite[Sec.~5.3.2.2]{sauter_schwab}, which we simplify and adapt to our setting.
We will work with componentwise analytic functions, as defined below.

\begin{definition}\label{def:pw_analytic}
	The function \( f \colon \Pi_{\vec{a}, \vec{b}}\to \bbC \) is componentwise analytic, if there exists \( \zeta > 1 \), such that, for each \( i \), and for all \( \vec{y}_i \coloneqq (y_1, \ldots, y_{i-1}, y_{i+1}, \ldots, y_n) \) with \( y_\ell \in [a_\ell,b_\ell] \), the function
	\[
		f_{\vec{y}_i}(t) \coloneqq f\plr*{y_1, \ldots, y_{i-1}, t, y_{i+1}, \ldots, y_n} \colon \icc{a_i}{b_i} \to \bbC,
	\]
	admits an analytic extension \( F_{\vec{y}_i}(z) \) in the  Bernstein ellipse
	\begin{align*}
		\scrE_{a_i,b_i}^{\zeta} = \setwt*{\frac{a_i+b_i}{2}+\frac{a_i-b_i}{2}\frac{z+z^{-1}}{2}}{z \in B_{\zeta}(0)},
	\end{align*}
	and is additionally continuous in the closure of this ellipse.\end{definition}

Next, to estimate \cref{eq:error_cubature}, in particular, the expression \cref{eq:error_cubature_recall_0}, we will choose a particular polynomial from \( \calP^M_{\Quad} \).
To construct it, let us define a Chebyshev weighted \( \spL^2 \)-projection operator of degree \( m \) in the direction \( i \), which is helpful  because we work with tensorized cubature rules.
In particular, let \( \blr{T_k(t)}_{k=0}^\infty \), \( T_k(t) = \cos(k \arccos t) \), \( t \in \icc{-1}{1} \), be the set of Chebyshev polynomials of the first kind, see~\cite[Ch.~3 and Thm.~3.1]{atap} for the relevant discussion.
For each given \( \vec{y}_i \) as in \cref{def:pw_analytic}, we set
\begin{align*}
	\Pi_i^{(m)}      & \colon \plr{\spC(K), \norm{\, \cdot \,}_\infty} \to \plr{\spC(K), \norm{\, \cdot \,}_\infty},
	\quad
	\Pi_i^{(m)} f\plr*{\vec{y}} \coloneqq \sum_{k=0}^{m}a_{\vec{y}_i, k} T_k(y_i),
	\\
	a_{\vec{y}_i, k} & \coloneqq c_k \int_{-1}^1 T_k(t) f_{\vec{y}_i}\plr*{\tfrac{b_i-a_i}{2}t+\tfrac{a_i+b_i}{2}} \plr{1-t^2}^{-1/2} \di t,
\end{align*}
where \( c_0 = 2 / \pi \) and \( c_k = 1 / \pi \) for \( k > 0 \).
Let us introduce the product operator
\begin{align}\label{eq:prod_approx}
	\vec{\Pi}^{(m)}f \coloneqq \Pi_{1}^{(m)} \Pi_2^{(m)} \cdots \Pi_n^{(m)}f \in \spPolyQ{m}.
\end{align}
We shall use this operator to prove the following approximation result.

\begin{proposition}\label{prop:error_f}
	Assume that \( f \) is as in \cref{def:pw_analytic}.
	Assume that \( \spPolyQ{N}\subset \calP^M_{\Quad} \), \( N\geq 1 \).
	Then there exists \( C_f>0 \), such that
	\begin{align*}
		e_{\calP}^M=\inf_{p\in \calP_{\Quad}^M}\norm*{f-p}_{L^{\infty}(K)} \leq C_f \, \log^{n-1}(N+1)\,  \zeta^{-N} / (\zeta-1).
	\end{align*}
\end{proposition}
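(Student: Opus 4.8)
The plan is to exploit the tensor-product structure of $\vec{\Pi}^{(m)}$ together with the classical one-dimensional Chebyshev approximation estimate for functions analytic in a Bernstein ellipse. Since $\spPolyQ{N}\subset\calP^M_{\Quad}$, taking $p=\vec{\Pi}^{(N)}f\in\spPolyQ{N}$ in the infimum gives $e_{\mathcal{P}}^M\leq\norm{f-\vec{\Pi}^{(N)}f}_{L^\infty(K)}$, so it suffices to bound this last quantity. First I would write the telescoping identity
\[
    f-\vec{\Pi}^{(N)}f
    = \sum_{i=1}^{n}\plr*{\Pi_1^{(N)}\cdots\Pi_{i-1}^{(N)}}\plr*{\Id-\Pi_i^{(N)}}\plr*{\Pi_{i+1}^{(N)}\cdots\Pi_n^{(N)}f},
\]
which reduces the problem to estimating each summand. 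By the triangle inequality,
\[
    \norm*{f-\vec{\Pi}^{(N)}f}_{L^\infty(K)}
    \leq \sum_{i=1}^{n}\norm*{\Pi_1^{(N)}\cdots\Pi_{i-1}^{(N)}}_{\infty\to\infty}\,\norm*{\plr*{\Id-\Pi_i^{(N)}}\plr*{\Pi_{i+1}^{(N)}\cdots\Pi_n^{(N)}f}}_{L^\infty(K)}.
\]

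Next I would collect the two standard one-dimensional facts about the univariate truncated Chebyshev projection on $[a_i,b_i]$ of degree $N$, viewed as the operator $\Pi_i^{(N)}$ acting in the $i$-th variable with the other variables frozen: (1) its $L^\infty\to L^\infty$ operator norm is bounded by $C\log(N+1)$ (the Lebesgue constant of the truncated Chebyshev series, cf.~\cite[Ch.~4]{atap}); and (2) if $g$ extends analytically to the Bernstein ellipse $\scrE_{a_i,b_i}^{\zeta}$ and is continuous on its closure, then $\norm{g-\Pi_i^{(N)}g}_{L^\infty([a_i,b_i])}\leq C\,M(g)\,\zeta^{-N}/(\zeta-1)$, where $M(g)$ is the maximum of the analytic extension on the ellipse (cf.~\cite[Thm.~8.2]{atap}). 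The key observation is that the intermediate functions $\Pi_{i+1}^{(N)}\cdots\Pi_n^{(N)}f$ remain, for each fixed choice of the remaining frozen coordinates, analytic in the $i$-th variable in $\scrE_{a_i,b_i}^{\zeta}$: indeed, $\Pi_j^{(N)}$ for $j>i$ acts only in coordinates other than $i$, it is a finite linear combination of point evaluations in those coordinates composed with $L^\infty$-bounded weighted integrals, so it cannot destroy analyticity in $x_i$, and its uniform norm is controlled by $\plr*{C\log(N+1)}^{n-i}$ times the analogous quantity for $f$ on the closed ellipse. Thus each summand is bounded by $\plr*{C\log(N+1)}^{i-1}\cdot\plr*{C\log(N+1)}^{n-i}\cdot C_f\,\zeta^{-N}/(\zeta-1)=\plr*{C\log(N+1)}^{n-1}C_f\,\zeta^{-N}/(\zeta-1)$, and summing the $n$ terms absorbs the harmless factor $n$ into the constant, yielding the claimed bound.

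The main obstacle, and the step requiring the most care, is making precise the claim that the partial products $\Pi_{i+1}^{(N)}\cdots\Pi_n^{(N)}f$ inherit the componentwise-analyticity hypothesis of \cref{def:pw_analytic} with the \emph{same} parameter $\zeta$, and with an analytic-extension bound $M(\cdot)$ on the closed ellipse that grows only like $\plr*{C\log(N+1)}^{n-i}$ times a fixed $f$-dependent constant. Concretely one must argue that for fixed $x_i$ ranging over the closed ellipse $\overline{\scrE_{a_i,b_i}^{\zeta}}$ and fixed remaining real coordinates, the value $\plr*{\Pi_{i+1}^{(N)}\cdots\Pi_n^{(N)}f}(x)$ is a holomorphic function of $x_i$ (Morera/uniform-convergence on the integral representation of each $\Pi_j^{(N)}$, using continuity of $f$ on the closure of the ellipse from \cref{def:pw_analytic}) and is bounded there by the product of the one-dimensional Lebesgue constants times $\sup$ of $|f|$ over the relevant ellipse-times-interval polydomain. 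Once this bookkeeping is done cleanly, the rest is the routine combination of (1) and (2) above, and one sets $C_f$ to be the resulting $f$-dependent constant (proportional to the supremum of $|f|$ over $\overline{\scrE_{a_1,b_1}^{\zeta}}\times\cdots\times\overline{\scrE_{a_n,b_n}^{\zeta}}$) to obtain exactly the stated estimate.
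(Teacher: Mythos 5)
Your overall strategy (take \(p=\vec{\Pi}^{(N)}f\), telescope, and combine the \(\log(N+1)\) operator-norm bound with the one-dimensional Bernstein-ellipse error bound) is the same as the paper's, but your telescoping is arranged differently, and the difference matters. The paper writes \(f-\vec{\Pi}^{(N)}f=(f-\Pi_1^{(N)}f)+\Pi_1^{(N)}\bigl(f-\Pi_2^{(N)}\cdots\Pi_n^{(N)}f\bigr)\) and iterates, so every error factor that appears is \(\bigl\|f-\Pi_{k+1}^{(N)}f\bigr\|_{L^\infty(K)}\), i.e.\ the one-dimensional truncation error applied to \(f\) itself with the other variables frozen at real values. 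That is exactly the situation covered by \cref{def:pw_analytic}, so the proof closes using only the two 1D facts you also quote. Your symmetric telescoping instead puts \((\Id-\Pi_i^{(N)})\) onto the partial product \(\Pi_{i+1}^{(N)}\cdots\Pi_n^{(N)}f\), which creates the extra obligation you yourself identify as the main obstacle: showing that these partial products are again componentwise analytic in \(x_i\) on the same ellipse, with a bound growing only like \(\log^{\,n-i}(N+1)\).

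That step is left as a sketch, and as sketched it overreaches the hypotheses: you propose to bound the analytic extensions by the supremum of \(|f|\) over the polydomain \(\overline{\scrE_{a_1,b_1}^{\zeta}}\times\cdots\times\overline{\scrE_{a_n,b_n}^{\zeta}}\), but \cref{def:pw_analytic} only provides an analytic extension in one variable at a time, with the remaining coordinates kept real in their intervals; \(f\) need not even be defined on the product of ellipses, let alone bounded there. What your argument actually requires is (i) a supremum of the extension over ``ellipse in the \(i\)-th variable \(\times\) real intervals in the others'', uniformly in the frozen variables, and (ii) enough joint continuity or measurability to push holomorphy in \(x_i\) through the coefficient integrals defining \(\Pi_j^{(N)}\) for \(j>i\) (Morera/Fubini). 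Neither is stated in \cref{def:pw_analytic}, so your route needs either this extra bookkeeping lemma with a mildly strengthened hypothesis, or the paper's rearrangement of the telescoping, which eliminates the issue entirely because the projection operators only ever multiply the error from the left, where the crude bound \(\bigl\|\Pi_\ell^{(N)}\bigr\|\leq C\log(N+1)\) suffices.
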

\begin{proof}
	We start by bounding \( e_\calP^M \leq e_N \coloneqq \norm{f-\vec{\Pi}^{(N)}f}_{L^{\infty}(K)} \).
	Since \( f-\vec{\Pi}^{(N)}f = f-\Pi^{(N)}_1f + \Pi^{(N)}_1f - \Pi_{1}^{(N)}\Pi_2^{(N)}\cdots \Pi_n^{(N)}f \), by the triangle inequality,
	\begin{equation*}
		e_N
		\leq \norm*{f - \Pi^{(N)}_1 f}_{L^{\infty}(K)}
		+ \norm*{\Pi_1^{(N)}} \norm*{f-\Pi_2^{(N)}\cdots \Pi_n^{(N)}f}_{L^{\infty}(K)}.
	\end{equation*}
	Repeating this procedure \( n-2 \) times, with different operators \( \Pi_k^{(N)} \), yields the bound:
	\[
		e_N
		\leq \norm*{f-\Pi_1^{(N)}f}_{L^{\infty}(K)}
		+ \sum_{k=1}^{n-1} \plr*{ \prod_{\ell=1}^k \norm*{\Pi_{\ell}^{(N)}} }  \norm*{f-\Pi_{k+1}^{(N)}f}_{L^{\infty}(K)}.
	\]
	One concludes with the help of the following bounds
	\[
		\norm*{ \Pi_i^{(N)} } \leq C \log (N+1)
		\quad \text{and } \quad
		\norm*{f-\Pi_{k+1}^{(N)}f}_{L^{\infty}(K)} \leq C_f \, \zeta^{-N} / (\zeta-1),
	\]
	easily deduced from existing 1D bounds in~\cite{powell_67,mason_80}
	and~\cite[Thm.~8.2]{atap} respectively.
\end{proof}

Combining \cref{prop:error_f} and \cref{lem:exact} allows quantifying the cubature error.

\begin{theorem}\label{thm:error_analytic}
	Assume that \( f \) is like in \cref{def:pw_analytic} with \( \zeta > 1 \).
	In the situation of \cref{asm:quad_tensor}, there exists a constant \( C_f>0 \), such that the following holds true.
	If the spaces \( \spPolyQ{N} \), \( N\geq 1 \), are \( \IFS \)-invariant, then,
	\begin{equation}\label{esti_pversion_Sinvariant}
		\abs*{\Quad_C^{N,n}[f] - \int_\Gamma f \di{\mu}}
		\leq \frac{C_f \; \zeta^{-N}}{\zeta-1} \log^{2n-1} (N+1).
	\end{equation}
	Otherwise,
	\begin{equation}\label{esti_pversion_Snoninvariant}
		\abs*{\Quad_C^{N,n}[f] - \int_\Gamma f \di{\mu}}
		\leq \frac{C_f \zeta^{-\lfloor N/n\rfloor}}{\zeta-1} \plr*{ 1 + \abs*{\vec{w}^{N,n}_C}_{1} }
	\end{equation}
\end{theorem}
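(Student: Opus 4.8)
The plan is to combine the error bound \eqref{eq:error_cubature_recall} with the $\spC^k$/analytic approximation results already established, being careful about \emph{which} polynomial space sits inside $\calP^M_{\Quad}$. Recall from \cref{thm:exact} that $\calP_{\IFS}\subset\calP_{\Quad}$, where $\calP_{\IFS}$ is the largest $\IFS$-invariant subspace of $\calP=\spPolyQ{N}$. So the whole argument splits into identifying a polynomial space of the form $\spPolyQ{m}$ or $\spPolyP[m]$ contained in $\calP_{\IFS}$ and applying \cref{prop:error_f}.

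First I would treat the $\IFS$-invariant case, which is the easy one: here $\calP^{N,n}_C=\spPolyQ{N}$ is itself $\IFS$-invariant, so $\spPolyQ{N}=\calP_{\IFS}\subset\calP_{\Quad}$. Plugging this directly into \eqref{eq:error_cubature_recall} together with \cref{prop:error_f} (with its choice $N$) gives
\[
\Big|\Quad_C^{N,n}[f]-\int_\Gamma f\di\mu\Big|
\le C_f\,\frac{\zeta^{-N}}{\zeta-1}\,\log^{n-1}(N+1)\,\bigl(1+\big|\vec{w}^{N,n}_C\big|_1\bigr),
\]
and then I would invoke \cref{prop:weight_tensor}, $\big|\vec{w}^{N,n}_C\big|_1\le C(n)\log^n(N+1)$, which absorbs the extra $\log^n$ factor and yields \eqref{esti_pversion_Sinvariant} with the total power $\log^{(n-1)+n}(N+1)=\log^{2n-1}(N+1)$ (the additive $1$ in $1+|\vec w|_1$ is harmless since $\log^n(N+1)\ge $ const for $N\ge1$).

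For the non-$\IFS$-invariant case I cannot control $|\vec{w}^{N,n}_C|_1$, so that factor must stay in the estimate; the only work is to find a genuine polynomial subspace inside $\calP_{\Quad}$. The key observation is that $\spPolyP[\lfloor N/n\rfloor]$ is $\IFS$-invariant (by the remark after the definition of $\IFS$-invariance, every $\spPolyP[k]$ is $\IFS$-invariant) and is contained in $\spPolyQ{N}$ since $|\vec\alpha|_\infty\le|\vec\alpha|_1\le\lfloor N/n\rfloor$ forces $|\vec\alpha|_\infty\le N$; hence $\spPolyP[\lfloor N/n\rfloor]\subset\calP_{\IFS}\subset\calP_{\Quad}$. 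To apply an analytic approximation estimate I would note that $\spPolyQ{\lfloor N/n\rfloor}\subset\spPolyP[\lfloor N/n\rfloor]$ is false in general, so instead I run \cref{prop:error_f} with $\spPolyQ{\lfloor N/n\rfloor}$: but that requires $\spPolyQ{\lfloor N/n\rfloor}\subset\calP^M_{\Quad}$, which may fail. The cleaner route is to re-examine the proof of \cref{prop:error_f}: the product projection $\vec{\Pi}^{(m)}f$ lands in $\spPolyQ{m}$, but by truncating the total degree one gets that the tensor-Chebyshev partial sum of total degree $\lfloor N/n\rfloor$ lies in $\spPolyP[\lfloor N/n\rfloor]$ and still satisfies $\|f-p\|_{L^\infty(K)}\le C_f\zeta^{-\lfloor N/n\rfloor}/(\zeta-1)$ up to a log factor absorbed into $C_f$. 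Substituting this $p$ into \eqref{eq:error_cubature_recall} gives \eqref{esti_pversion_Snoninvariant}.

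The main obstacle is this last point: making precise that one can extract from the analytic-approximation machinery a polynomial of \emph{total} degree $\lfloor N/n\rfloor$ (so as to land in the $\IFS$-invariant space $\spPolyP[\lfloor N/n\rfloor]$) with the exponentially small error $\zeta^{-\lfloor N/n\rfloor}$. Concretely one restricts $\vec{\Pi}^{(m)}$ to total degree by keeping only monomials $T_{k_1}(y_1)\cdots T_{k_n}(y_n)$ with $k_1+\dots+k_n\le m$; the tail is controlled by the product of the 1D geometric decays, giving $\sum_{k_1+\dots+k_n>m}\prod_i\zeta^{-k_i}=O(\zeta^{-m}\,m^{n-1})$, whose polynomial prefactor is swallowed by $C_f$. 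Everything else — the triangle-inequality telescoping over directions, the $\log(N+1)$ operator-norm bounds, and the final substitution into \eqref{eq:error_cubature_recall} — is routine and already carried out in \cref{prop:error_f}.
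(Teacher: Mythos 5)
Your treatment of the \( \IFS \)-invariant case coincides with the paper's proof (combine \eqref{eq:error_cubature_recall} with \cref{prop:error_f} and \cref{prop:weight_tensor}), so that part is fine. The issue is the non-invariant case, where you dismiss the direct route for a wrong reason and substitute an argument with a genuine gap. The inclusion \( \spPolyQ{\lfloor N/n\rfloor}\subset\calP^M_{\Quad} \), which you claim ``may fail'', in fact always holds: since \( \abs{\vec{\alpha}}_\infty\leq\abs{\vec{\alpha}}_1 \), one has \( \spPolyP[N]\subset\spPolyQ{N} \), and \( \spPolyP[N] \) is \( \IFS \)-invariant, so \( \spPolyP[N]\subset\calP_{\IFS}\subset\calP_{\Quad} \) by \cref{thm:exact}; moreover \( \abs{\vec{\alpha}}_\infty\leq\lfloor N/n\rfloor \) implies \( \abs{\vec{\alpha}}_1\leq n\lfloor N/n\rfloor\leq N \), hence \( \spPolyQ{\lfloor N/n\rfloor}\subset\spPolyP[N]\subset\calP_{\Quad} \). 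You only identified the (much smaller) invariant subspace \( \spPolyP[\lfloor N/n\rfloor] \) inside \( \spPolyQ{N} \), missing that it actually contains all of \( \spPolyP[N] \). With the correct inclusion, \cref{prop:error_f} applied with \( \lfloor N/n\rfloor \) in place of \( N \), inserted into \eqref{eq:error_cubature_recall}, yields \eqref{esti_pversion_Snoninvariant} immediately --- this is exactly the paper's proof, and no new approximation machinery is needed.

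The replacement argument you propose --- truncating \( \vec{\Pi}^{(m)}f \) to total degree \( m=\lfloor N/n\rfloor \) and bounding the discarded part by \( \sum_{k_1+\cdots+k_n>m}\prod_i\zeta^{-k_i} \) --- is not justified under \cref{def:pw_analytic}. That tail estimate presupposes product-type decay \( \abs{a_{\vec{k}}}\lesssim\zeta^{-(k_1+\cdots+k_n)} \) of the joint Chebyshev coefficients, which requires analytic continuation of \( f \) to a product of Bernstein ellipses in all variables simultaneously. Componentwise analyticity in the sense of \cref{def:pw_analytic} (one complex variable at a time, the others real) only gives decay in each index separately, i.e.\ \( \abs{a_{\vec{k}}}\lesssim\zeta^{-\max_i k_i} \), and then the tail over \( \{\sum_i k_i>m\} \) decays only like \( \zeta^{-m/n} \) --- no better than what the simple inclusion already delivers. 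This is precisely the several-complex-variables subtlety the paper deliberately defers to \cite{trefethen_17} in \cref{rem_optimalite}. (A minor further point: a prefactor \( m^{n-1} \) cannot be ``swallowed by \( C_f \)''; it can only be absorbed by slightly reducing \( \zeta \).) So keep your first case, and for the second replace the truncation construction by the two-line inclusion above.
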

\begin{proof}
	Assume that \( \spPolyQ{N} \), \( N \geq 1 \), are \( \IFS \)-invariant.
	Then \cref{esti_pversion_Sinvariant} follows from \cref{eq:error_cubature_recall}, \cref{prop:weight_tensor} and  \cref{prop:error_f}. If \( \spPolyQ{N} \) is not \( \IFS \)-invariant, by \cref{thm:exact}, the respective cubature rule integrates the polynomials in \( \spPolyP[N] \) exactly.
	Because \( \spPolyQ{\lfloor N/n \rfloor}\subset \spPolyP[N] \), we can again combine \cref{eq:error_cubature_recall} and \cref{prop:error_f} to get \cref{esti_pversion_Snoninvariant}.
\end{proof}

\begin{remark}\label[remark]{rem_optimalite} In the \( \IFS \)-invariant case, compared to the multivariate integration on product domains based on the tensor-product Gauss quadrature (\emph{cf.}~\cite[Thm.~5.3.13]{sauter_schwab}), our cubature error is slightly worse, by a factor \( \log^{2n-1}(N+1) \).
	Moreover, according to the estimate \cref{esti_pversion_Snoninvariant}, the non-\( \IFS \)-invariant case might suffer from a deteriorated convergence rate. We think that this estimate is non-optimal, due to the technique of proof based on \( \spPolyQ{N} \) spaces.
	We believe that one could recover, under stronger assumptions on \( f \) than the ones in \cref{def:pw_analytic}, the same convergence rate as for the \( \IFS \)-invariant case by adapting the approach of~\cite{trefethen_17} to \( \spPolyP[N] \) spaces.
	This approach uses a more elaborate theory of analytic functions of several variables, which is out of scope of this paper. See also the numerical results of \cref{sec:h_conv}.
\end{remark}

\section{Numerical experiments}\label{sec:numeric}

In this section we present several numerical experiments, illustrating statements of different results in our paper, as well as performance of the new cubature rule.
In practice, the compact set \( K \supset \Gamma \) chosen is a hyperrectangle and the points \( \calX \) are chosen on a Cartesian grid, see \cref{asm:quad_tensor}.

\subsection{Algorithmic realization and implementation}
We implemented the cubature rule following \cref{asm:quad_tensor}, as a \texttt{Julia} code, see~\cite{ZM_code}.
In all experiments we used Chebyshev points of the first kind.

\paragraph{Finding \( K \)}
Finding a bounding box for an IFS is a subject of research~\cite{CHU2003407,Mar2009}.
The bounding box is the smallest hyperrectangle \( K = \Pi_{i=1}^n \icc{a_i}{b_i} \supset \Gamma \).
It is either explicitly known (for ``classic'' fractals like Cantor dust), or can be computed as the smallest hyperrectangle containing the fractal.
To approximate the latter, we used a chaos game approach where we start from the fixed points \( \blr{c_\ell}_{\ell \in \bbL} \) and generate new points by applying the map \( S_\ell \) using the ``probabilities'' \( \mu_\ell \).
We obtained the approximate bounding box by considering the smallest hyperrectangle containing all the generated points after large number of iterations.

\paragraph{Computation of cubature weights}
To compute the cubature weights based on the algebraic characterization \cref{eq:pb-weights}, we need to evaluate the entries of the matrix \( \vec{S} \) (Lagrange polynomials), and solve the corresponding eigenvalue problem.
The Lagrange polynomials were evaluated using the barycentric interpolation formula from~\cite[Ch.~5]{atap}.
Recall solving the eigenvalue problem in the \( \IFS \)-invariant case amounts to finding the eigenvector of \( \vec{S}^{\trans} \) corresponding to the largest (in modulus) eigenvalue \( \lambda = 1 \), \emph{cf.}~\cref{lem:S-inv-spectrum}.
Thus, we used the power iteration method, which converges geometrically fast, with a \( 10^{-14} \) absolute residual.
We used the same method in the non-\( \IFS \)-invariant case.
Despite the absence of theory, in all our numerical experiments, \( \lambda=1 \) appeared to be the largest in modulus and a simple eigenvalue.

\begin{remark}
	To compute the weights, we need to assemble the matrix \( S^\trans \).
	This requires \( \OO\plr{L M^3} \) operations.
	The iterative power method converges with rate \( \rho_{\max} = \max_{\ell \in \bbL} \rho_\ell \), see \cref{lem:S-inv-spectrum,lem:loc-spectrum} in the \( \IFS \)-invariant case. Based on the numerical experiments, we conjecture that this is true in the non-\( \IFS \)-invariant case as well.
	In order to have an error on the residual of order \( \delta \coloneqq \norm{\vec{S}^\trans \vec{w} - \vec{w}}_2 \ll 1 \), we need to do \( \OO\plr{\log\delta / \log\rho_{\max}} \) iterations.
	As each iteration costs \( \OO\plr{M^2} \), the total cost of computing the weights is
	\[
		\OO\plr*{L M^3 + M^2 \log\delta / \log\rho_{\max}}.
	\]
\end{remark}

\begin{remark}
	To estimate the cost of computing an integral using our cubature rule, we start by assuming that the absolute values of weights grow ``slowly'', more precisely, \( \abs{\vec{w}_C^{N,n}}_1 = \OO\plr{N^\eta} \), as \( N \to +\infty \), for some \( \eta > 0 \). This holds in particular in the \( \IFS \)-invariant case, when  Chebyshev points quadrature is used to construct the tensorized cubature, \emph{cf.}~\cref{prop:weight_tensor}. We can estimate a cost of a cubature via the number of function evaluations, denoted \( E \), with respect to the desired error \( \abs{\Quad\clr{f} - \int_\Gamma f \di{\mu}} \leq \eps \), as \( \eps \to 0 \).
	For the  \( p \)-version, \( \Quad = \Quad_C^{N,n} \), we have \( E = M \), where \(M\) is the number of cubature points.
	For the \( h \)-version, \( \Quad = \Quad_h \), using \cref{remark:stop}, we obtain \( E = \OO\plr{h^{d_*}} \), where \( d_* = -\log L / \log\rho_{\max} \).
	Therefore, we get for a \( \spC^k\plr{K} \) function:
	\[
		E = \OO\plr*{\eps^{-\frac{n}{k-\eta}}} \ (\text{\( p \)-ver.~\cref{thm:approx_error2}})
		\quad \text{and} \quad
		E = \OO\plr*{\eps^{-\frac{d_*}{k}}} \ (\text{\( h \)-ver.~\cref{thm:h-version}}).
	\]
	For an analytic function on \( K \):
	\[
		E = \OO\plr*{\abs{\log\eps}^n} \ (\text{\( p \)-ver.~\cref{thm:error_analytic}}).
	\]
\end{remark}

\paragraph{Reference solution and errors}
Reference values for the integrals were always computed using a highly-refined \( h \)-version of the method of order \( \OO\plr{h^{15}} \), as described in \cref{sec:h_version}.
The depicted errors are relative errors.

\paragraph{Self-similar sets and measures}
Many of our calculations were performed on different Vicsek-type fractals, defined on \( \bbR^2 \) using \( 5 \) contracting maps:
\[
	S_0 x = R_{\theta}x, \qquad
	S_{\ell} x = \rho x + (1-\rho) c_{\ell}, \quad
	\ell = 1, \ldots, 4, \quad
	c_\ell = \plr*{\pm 1, \pm 1},
\]
where \( \rho = 1 / 3 \), \( R_{\theta} \) is a rotation by angle \( \theta \), see also~\cref{fig:vicsek}.
We use the self-similar measure with \( \mu_\ell = \frac{1}{5} \),  \( \ell \in \bbL \).
\begin{figure}[hbtp]
	\centering
	\subfloat[\( \theta = 0 \)]{\label{fig:vicsek-0}
		\includegraphics[width=0.25\textwidth]{{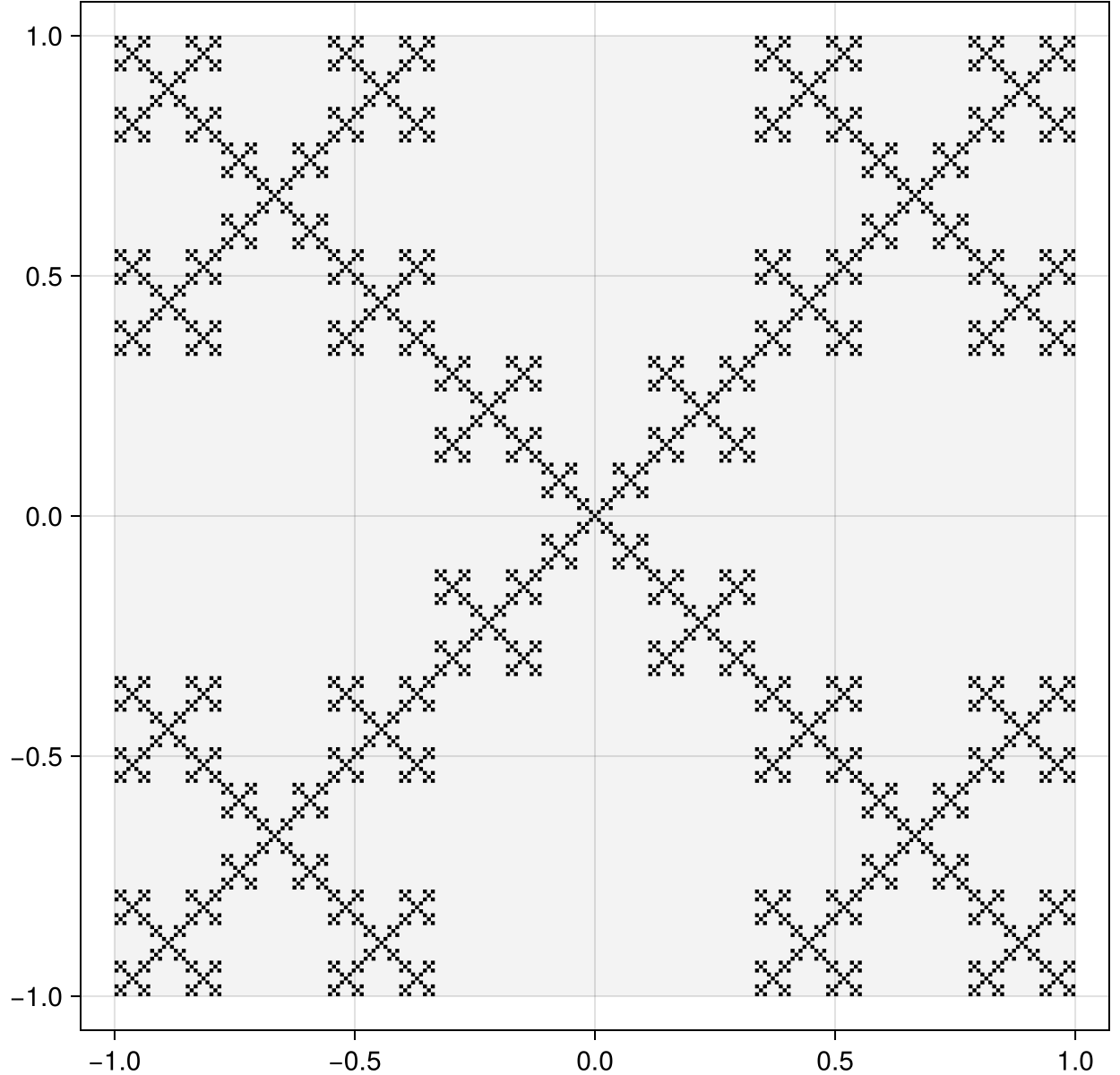}}
	}
	\subfloat[\( \theta = 0.4 \)]{\label{fig:vicsek-0.4}
		\includegraphics[width=0.25\textwidth]{{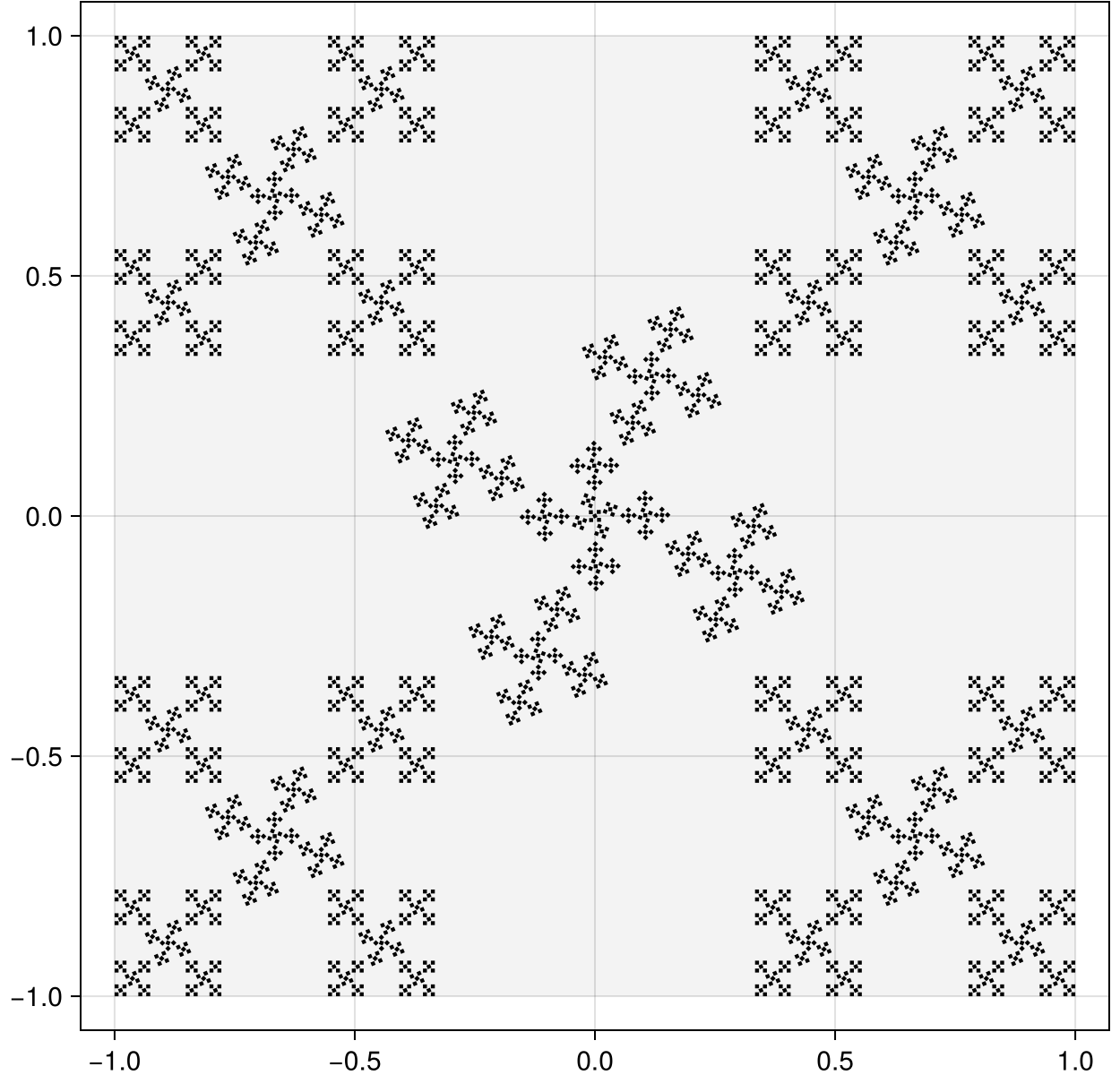}}
	}
	\subfloat[\( \theta = \pi / 4 \)]{\label{fig:vicsek-pio4}
		\includegraphics[width=0.25\textwidth]{{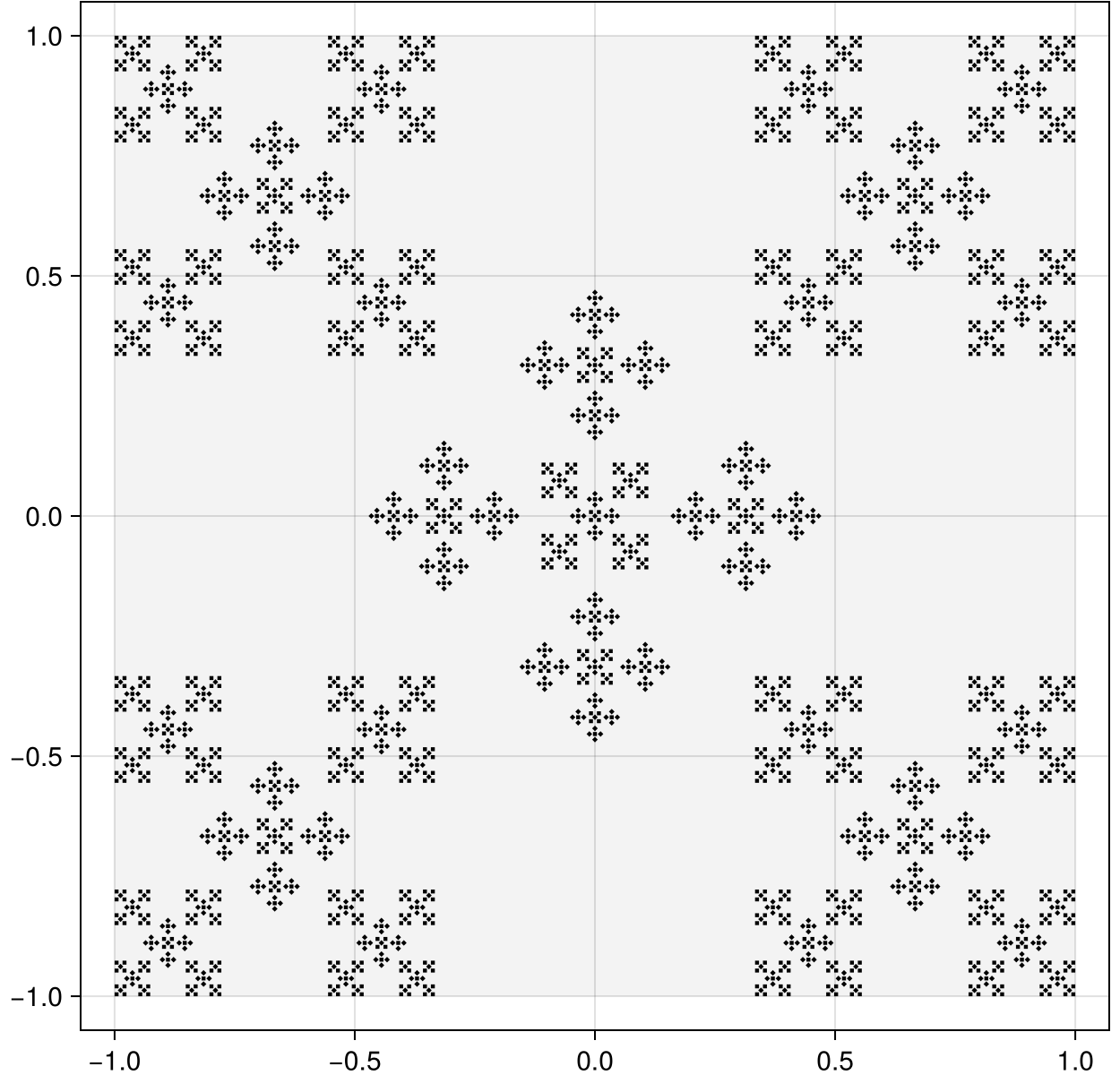}}
	}
	\caption{Three Vicsek prefractals.}\label{fig:vicsek}
\end{figure}
Remark that for \( \theta\in \{0, \frac{\pi}{2}\} \), the polynomial space \( \spPolyQ{N} \) is \( \IFS \)-invariant, while this is not the case for other values of \( \theta \in (0, \frac{\pi}{2}) \).

In some experiments, we worked with the 1D Cantor set, where we used the Hausdorff measure, see \cref{rem:hausdorff}.

\subsection{Behavior of cubature weights}\label{sec:numeric_weights}

In our first experiment, we compute cubature weights for two examples of fractals: Cantor set (\( S_1(x) = \rho x \) and \( S_2(x) = \rho x + (1-\rho) \)) and Vicsek fractal with \( \theta = 0 \), see \cref{fig:points-weights}.
\begin{figure}[hbtp]
	\centering
	\subfloat[Cantor set with \( \rho=1/3 \), \( M = 255 \)]{\includegraphics[width=0.4\textwidth]{{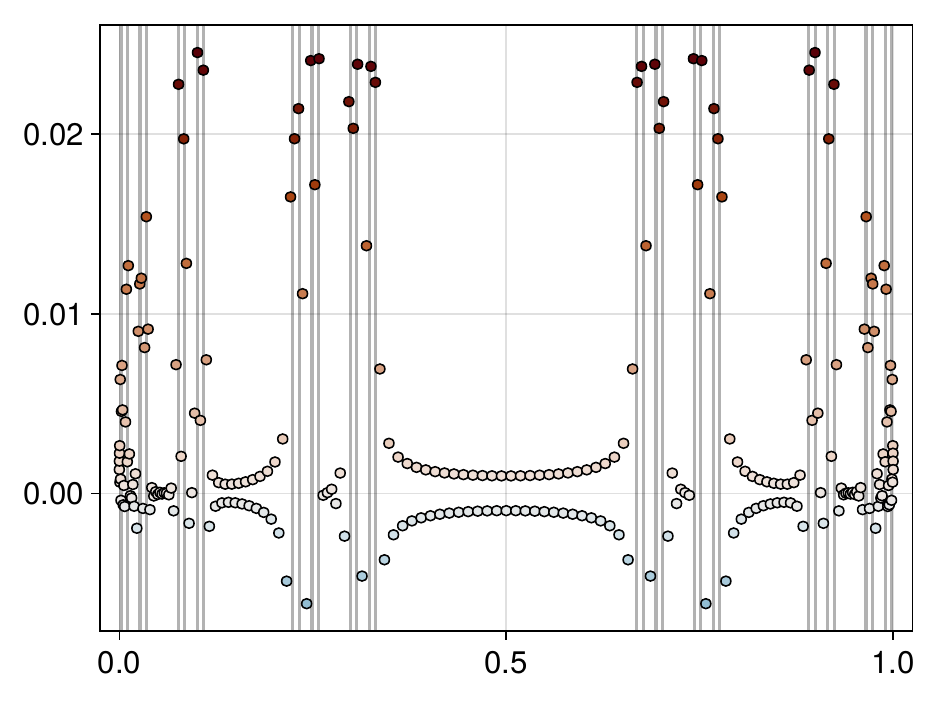}}
	}
	\hspace{2em}
	\subfloat[Vicsek with \( \theta=0.4 \),  \( M = 100 \)]{\includegraphics[width=0.4\textwidth]{{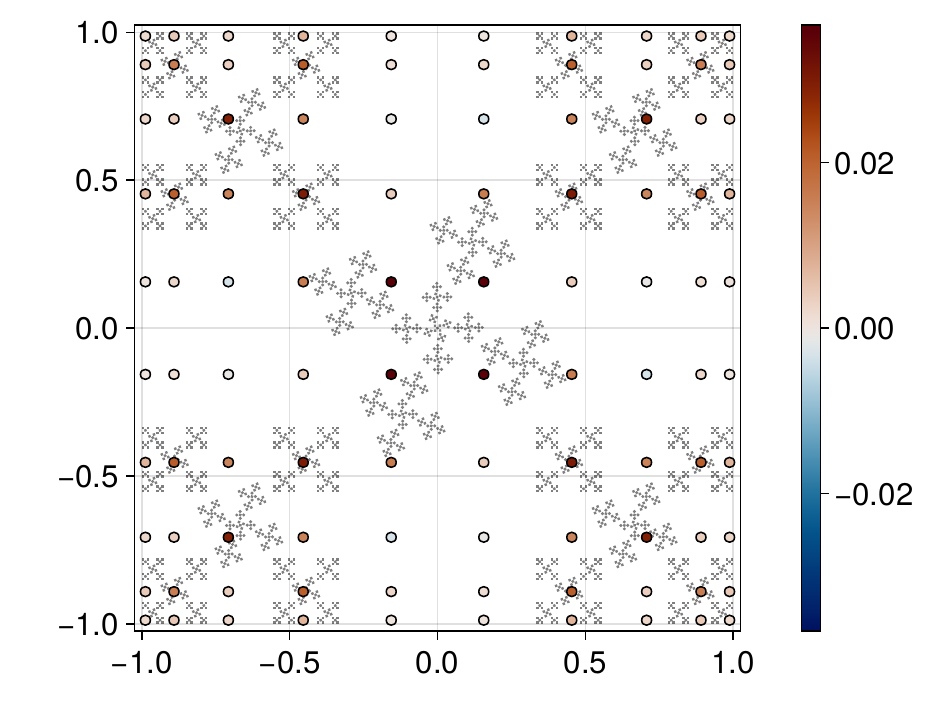}}
	}
	\caption{Left figure: the values of weights (\( y \)-axis) vs the quadrature points (\( x \)-axis). The intersection of vertical lines with \( y=0 \) shows an approximate location of the Cantor set. Right figure: location of cubature points in the 2D-set \( K \) used to construct a cubature on the Vicsek fractal. Values of corresponding quadrature/cubature weights are indicated in color.}\label{fig:points-weights}
\end{figure}
First, the weights can be negative. Second, they seem to be larger in modulus for cubature points close to \( \Gamma \) as one expects intuitively.

To illustrate that negative weights do not seem to affect the cubature accuracy for large \( M \), see \cref{eq:error_cubature_recall}, we plot \( \abs{\vec{w}^M}_1 \) in \cref{fig:growth_weights} as a function of \( M \).
\begin{figure}[hbtp]
	\centering
	\subfloat[Cantor set]{\includegraphics[width=0.4\textwidth]{{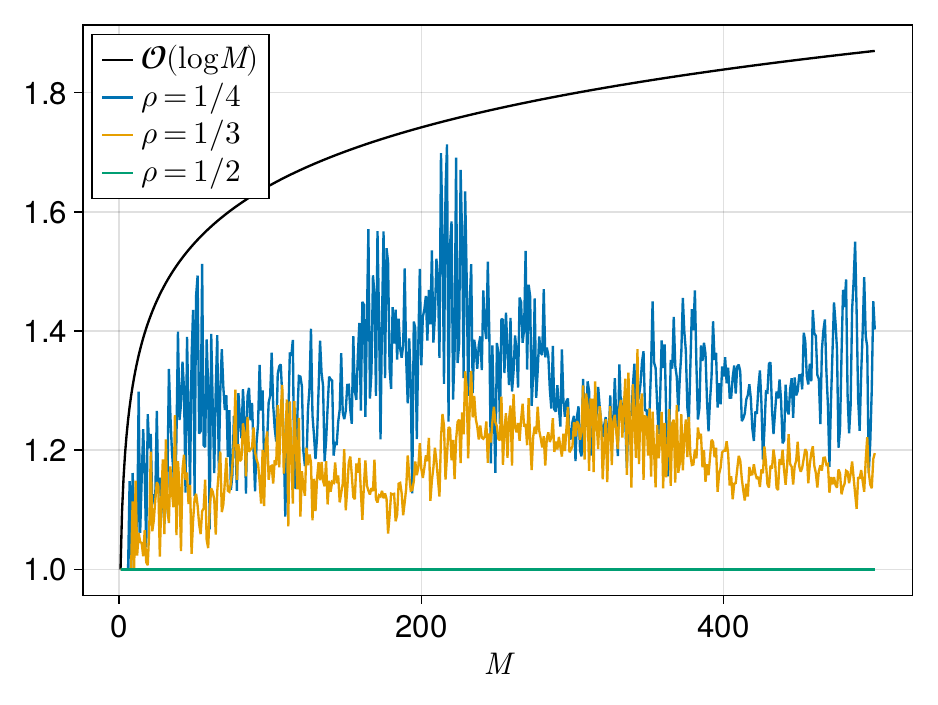}}
	}
	\hspace{2em}
	\subfloat[Vicsek with \( \rho = 1/3 \)]{\includegraphics[width=0.4\textwidth]{{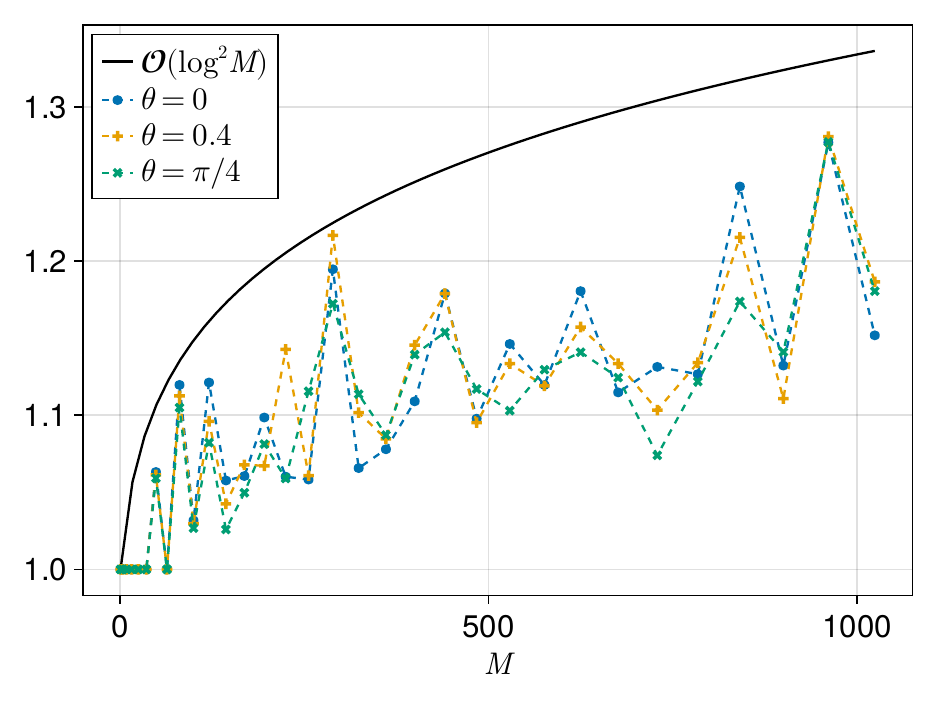}}
	}
	\caption{The dependence of the norm \( |\vec{w}^{M}|_1 \) on \( M \).}\label{fig:growth_weights}
\end{figure}

\subsection{Convergence of the \texorpdfstring{\( h \)}{h}-version and \texorpdfstring{\( p \)}{p}-version}\label{sec:h_conv}

We illustrate in \cref{fig:vicsek-S-inv} the statements of \cref{thm:h-version,thm:error_analytic} by evaluating \( Q[f] \) for
\begin{equation}\label{functionf}
	f(x) = \ex^{\im \kappa \abs{x-x_0}_2} / \abs{x-x_0}_2,  \text{ for }  \kappa=5  \text{ and } x_0 = (0.1, -2),
\end{equation}
which is used in wave scattering applications.
Remark that the location of the singularity \( x_0 \) is chosen outside \( K \).
As expected, \emph{cf.}~\cref{thm:h-version}, since the cubature is exact in both \( \IFS \)-invariant and non-\( \IFS \)-invariant cases.
The results given by \( h \)-version (\cref{fig:vicsek-S-inv-b}) do not differ significantly in these two cases, note that the \( \spPolyQ{k} \) curves overlap.
For the \( p \)-version (\cref{fig:vicsek-S-inv-a}) we also observe that the convergence rates for non-\( \IFS \)-invariant case seem to be the same as in the \( \IFS \)-invariant case.
This seems to confirm that our estimate \cref{esti_pversion_Snoninvariant} in \cref{thm:error_analytic} is not optimal (see \cref{rem_optimalite}).

Remark that to get the \( h^2 \) convergence order, we need \( 4 \) cubature points for \( \Gamma \subset \bbR^2 \), whereas, in~\cite{gibbs2023}, the authors succeeded to reach the \( h^2 \) convergence order with only \( 1 \) point by clever choice of the location of this point.
However, to our knowledge, one does not know how to achieve such a super-convergence for higher orders.

Also, the error for the \(p\)-version stagnates at the order \(\OO(10^{-13})-\OO(10^{-14})\).
This comes from the fact that we compute the cubature weights with the error \(\OO(10^{-14})\), on one hand, and, on the other hand, due to the accumulation of the round off errors.
Indeed, to obtain weights of the cubature for the \(p\)-version, we work with matrices of the size \(N^2 \times N^2\), while for the \(h\)-version their size is independent of \(h\).

\begin{figure}[hbtp]
	\centering
	\subfloat[\( p \)-version\label{fig:vicsek-S-inv-a}]{\includegraphics[width=0.49\textwidth]{{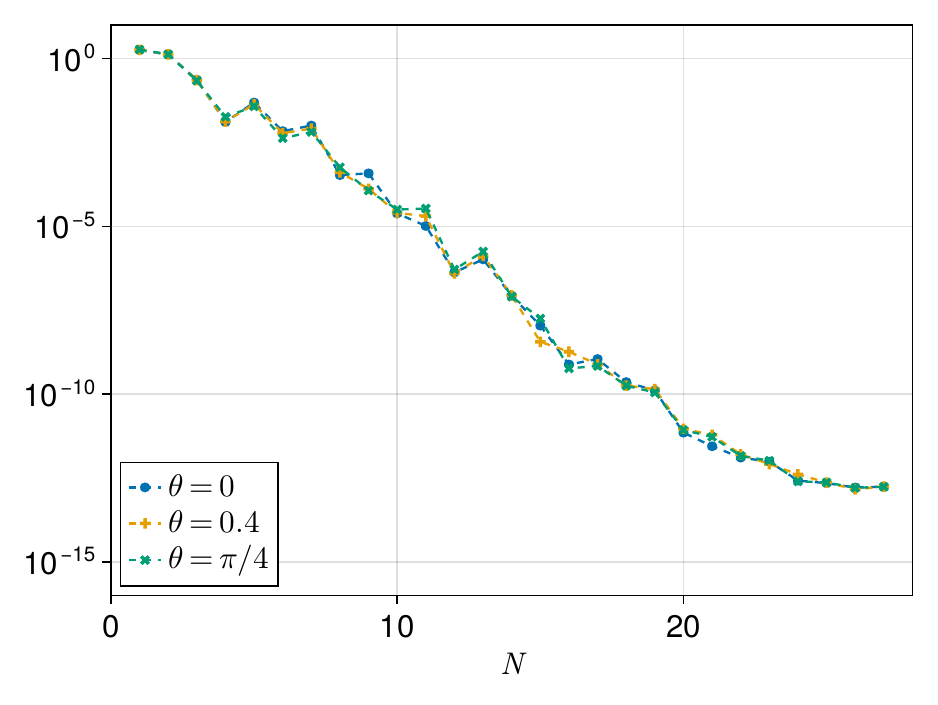}}
	}
	\subfloat[\( h \)-version\label{fig:vicsek-S-inv-b}]{\includegraphics[width=0.49\textwidth]{{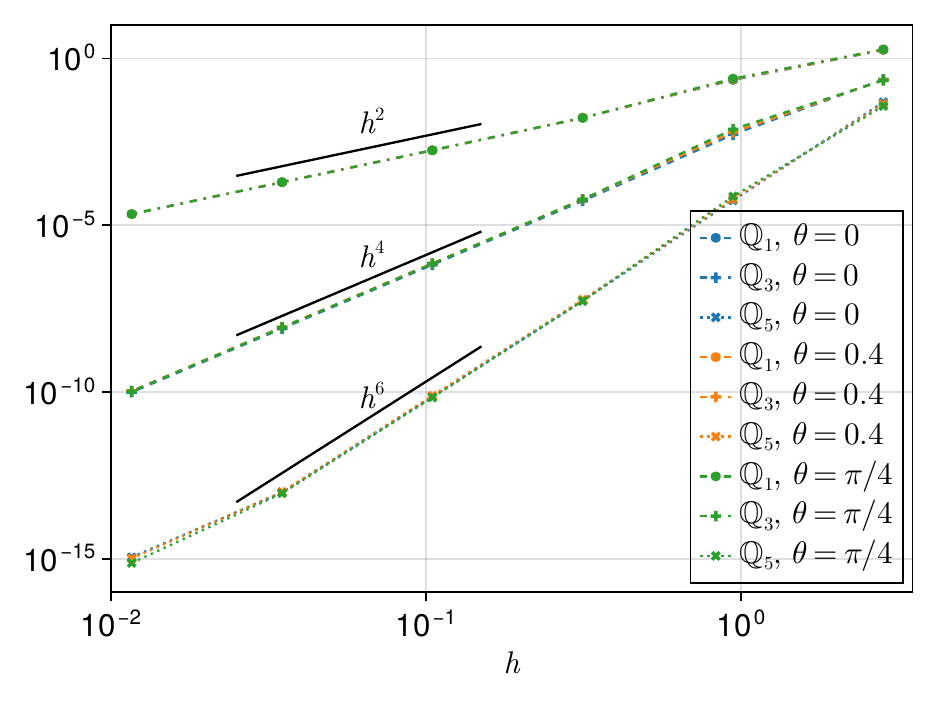}}
	}
	\caption{Convergence for Vicsek without and with rotation and \( \rho = 1/3 \).
		Remark that at this scale the \( \spPolyQ{k} \) curves for different angles \( \theta \) are almost indistinguishable.}\label{fig:vicsek-S-inv}
\end{figure}

\subsection{Integrating almost singular functions}\label{sec:intGamma}

In this section, we perform the experiments for a 2D Cantor dust (\(S_{i,j}(x,y)=S_i(x)S_j(y)\), \(i, j=1, 2\), with  \(S_1(x)=x/3-2/3\) and \(S_2(x)=x/3+2/3\)), located inside \( K = \icc{-1}{1}^2 \).
We work with the  Hausdorff measure, \emph{cf.} \cref{rem:hausdorff}.
We compute integrals of the function \( f \) for \( \kappa=5 \), see \cref{functionf}, with different choices of \( y \) in \( x_0 = (0.1, y) \).
For \( y\in (-2,\,-1) \), \( x_0 \) is located outside \( K \) and when \( y \) approaches \( -1 \) from below, \( x_0 \) approaches \( K \), meaning that the integrand \( f \) becomes almost singular.
For \( y=-1 \), \( x_0 \) lies on the boundary of \( K \), and for \( y=0 \), \( x_0 \) is inside \( K \).

When analyzing the error of the \( h \)-version of the cubature, we remark that for the two latter cases, \( f \) is not continuous in \( K \), but for \( h \) small enough, \( f \) is smooth in the domain \( K_h \supset \Gamma \) defined in \cref{defKh}.
We expect the cubature \cref{eq:new_cub} to converge at the maximal rate, \emph{cf.}~\cref{rem:Kh}.
This is confirmed by the numerical results in \cref{fig:hp_sing}, right.
On the other hand, the convergence analysis of the \( p \)-version relies on the smoothness of \( f \) inside \( K \) (and its analyticity properties), \emph{cf.}~\cref{thm:approx_error2,thm:error_analytic}.
Thus, we expect the convergence to deteriorate as \( x_0 \) approaches \( K \).
This is confirmed by numerical results in \cref{fig:hp_sing}, left, where the worst results are observed for \( y \geq -1.25 \).
\begin{figure}[hbtp]
	\centering
	\subfloat[\( p \)-version]{\includegraphics[width=0.49\textwidth]{{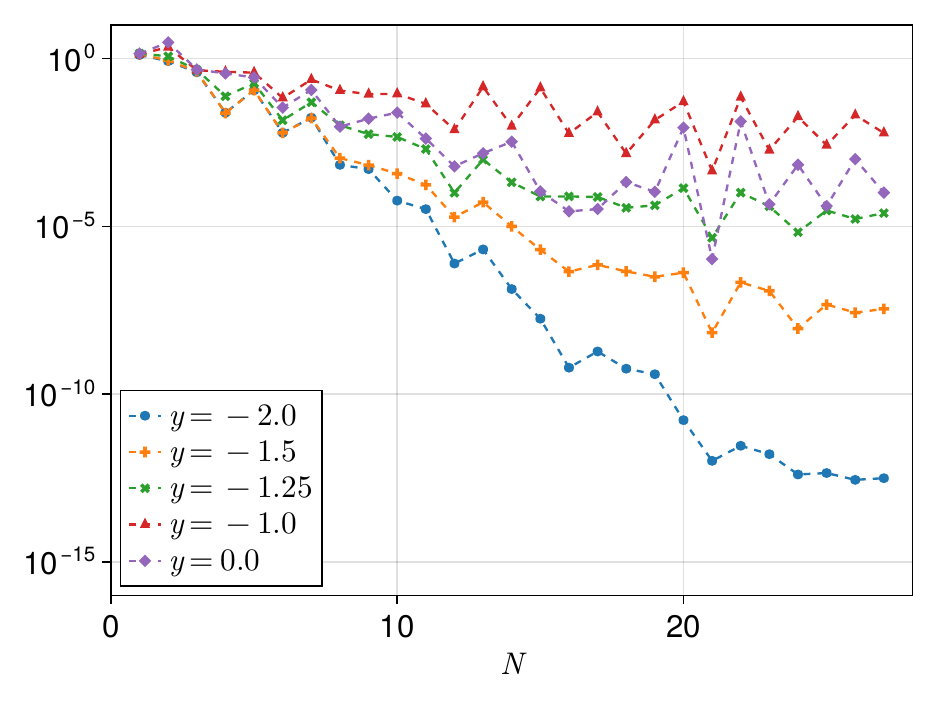}}
	}
	\subfloat[\( h \)-version]{\includegraphics[width=0.49\textwidth]{{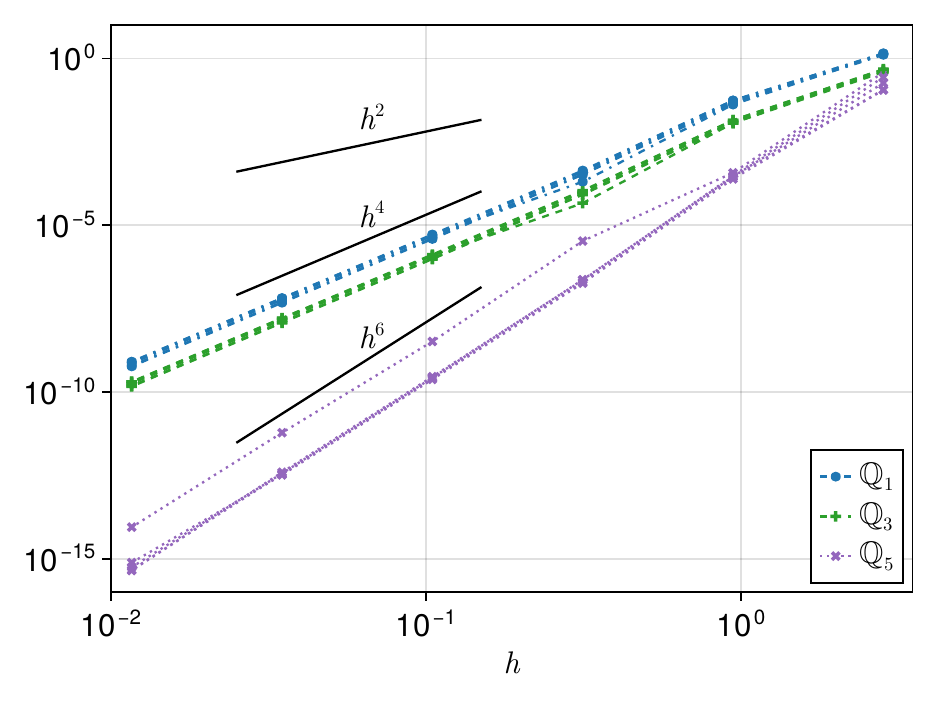}}
	}
	\caption{Comparison of the \( p \)- and \( h \)-versions. In the right plot, we mark with the same colors the curves corresponding to different \( x_0 \) but the same value of \( p \) in the space \( \spPolyQ{p} \).}\label{fig:hp_sing}
\end{figure}

\subsection{Integrating over more exotic attractors of IFS}\label{sec:exotic-attractors}

This section is dedicated to computation of integrals on less classic fractals.
We approximate \( \int_{\Gamma} f \di{\mu} \) for \( f \) defined in \cref{functionf}, over four different self-affine sets: the fat Sierpi\'nski triangle, the Barnsley fern, the Koch snowflake, as well as a non-symmetric Cantor dust, see \cref{ex:sas}.
The fat Sierpi\'nski triangle does not satisfy an open set condition. The IFS of the Barnsley fern is not \emph{similar}, and, moreover, for one of the matrices \( A_{\ell} \), \( \Ker A_{\ell}\neq \{0\} \).
The IFS for the Koch snowflake and the non-symmetric Cantor dust contain transformations with non-trivial rotations; moreover, the Hausdorff dimension of the Koch snowflake is \( d=2 \).
The invariant measures for these examples were chosen as follows: for the fat Sierpi\'nski triangle, \( \mu_{\ell}=1/3 \), \( \ell\in\bbL \); for the Koch snowflake we choose \( \mu_{\ell}=\rho_{\ell}^2 \), \( \ell\in\bbL \); for the Barnsley fern \( \mu_\ell \)'s have been chosen according to~\cite[Tbl.~3.8.3]{barnsley}; finally, for the non-symmetric Cantor dust, we again choose \( \mu_{\ell}=\rho_{\ell}^d \), \( \ell\in\bbL \), with \( d \) solving \( \sum_{\ell\in \bbL}\rho_{\ell}^d=1 \), \emph{cf.}~\cref{rem:hausdorff}.

The results for the \( h \)- and \( p \)- versions of the cubature for the Barnsley fern are shown in \cref{fig:barnsley}, and for the rest of the fractals in \cref{fig:remaining}.
We see that the \( h \)-version converges quite neatly for all the fractals in question.
At the same time, the \( p \)-version performs slightly worse for the non-symmetric Cantor dust and the Barnsley fern.
We do not know a precise reason for this (perhaps this is related to the choice of points on a Cartesian grid, which is probably non-optimal in this case).

\begin{figure}[hbtp]
	\centering
	\subfloat[\( p \)-version]{\includegraphics[width=0.49\textwidth]{{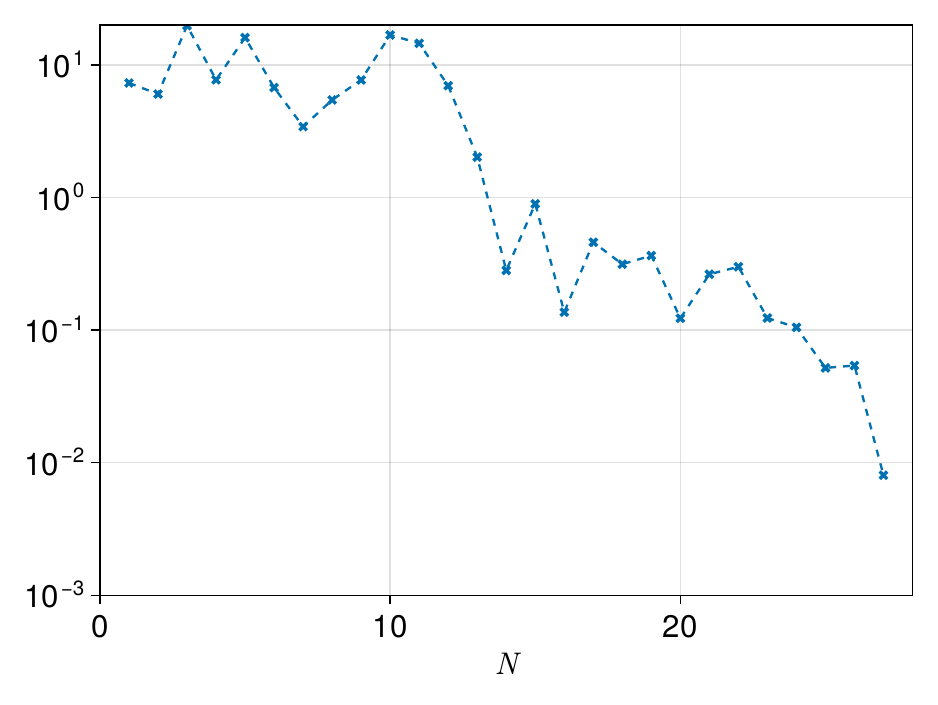}}
	}
	\subfloat[\( h \)-version]{\includegraphics[width=0.49\textwidth]{{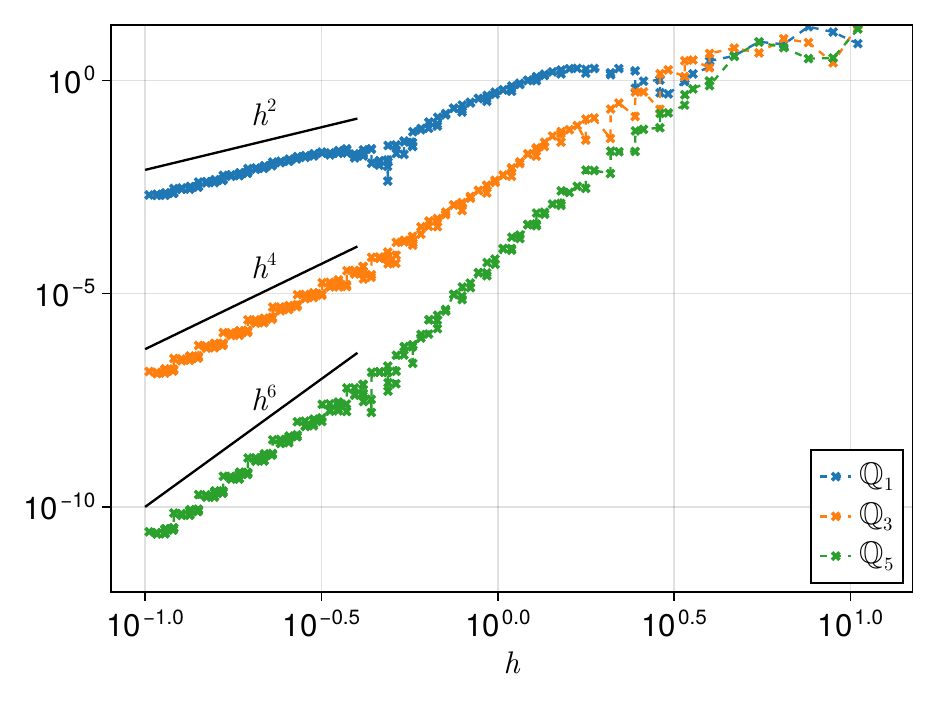}}
	}
	\caption{Convergence for the Barnsley fern. Note the difference of the scales in both figures.}\label{fig:barnsley}
\end{figure}
\begin{figure}[hbtp]
	\centering
	\subfloat[\( p \)-version]{\includegraphics[width=0.49\textwidth]{{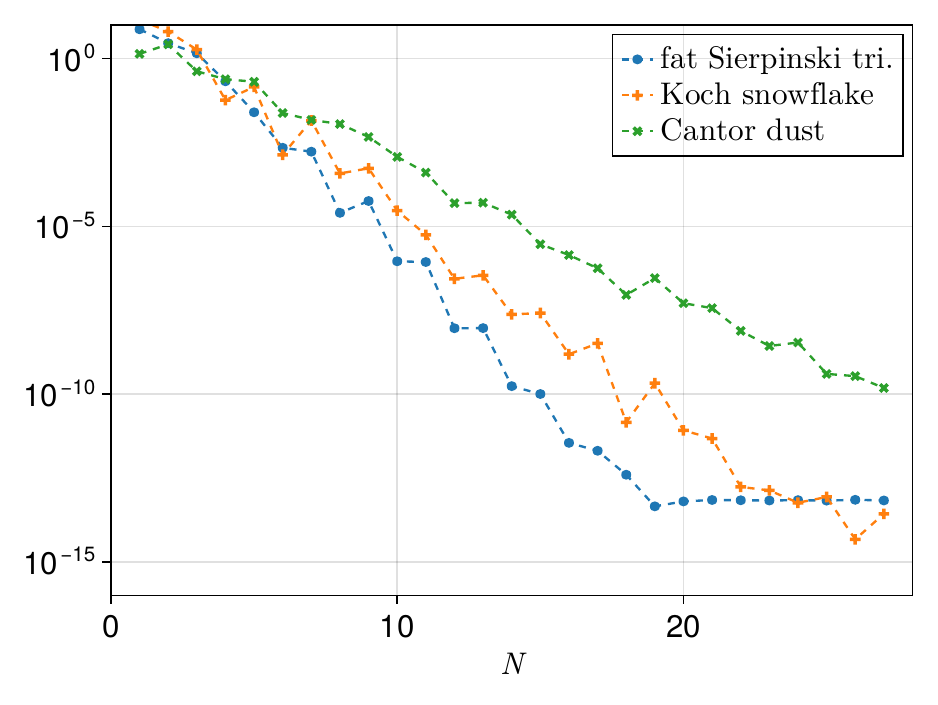}}
	}
	\subfloat[\( h \)-version]{\includegraphics[width=0.49\textwidth]{{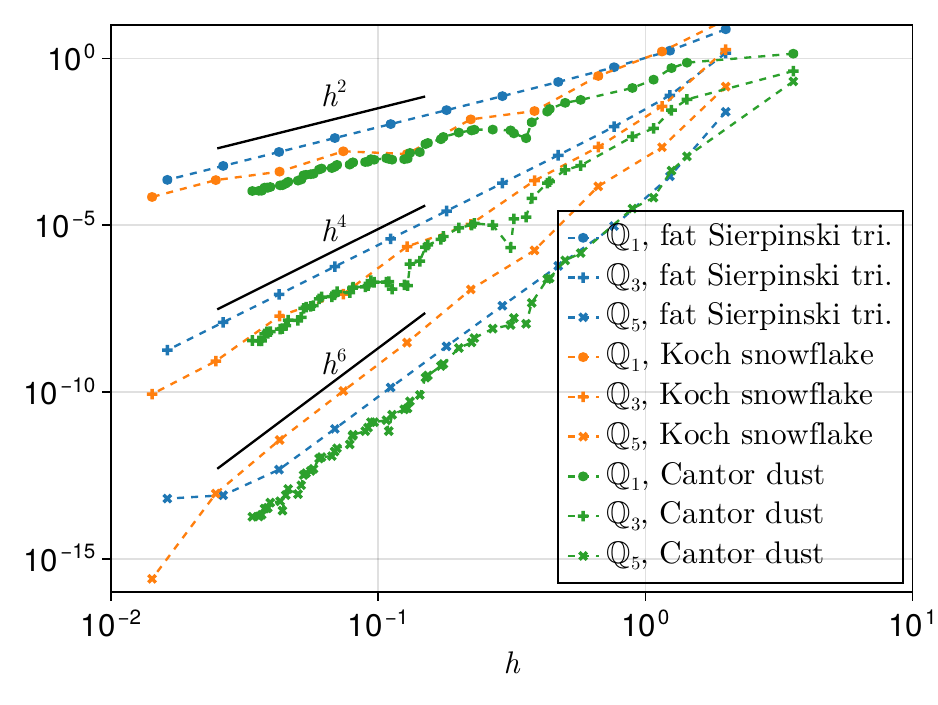}}
	}
	\caption{Convergence for the fat Sierpi\'nski gasket, Koch snowflake and Cantor dust.}\label{fig:remaining}
\end{figure}

\section*{Acknowledgements}
We are grateful to David Hewett (UCL) and Andrew Gibbs (UCL) for fruitful discussions.
Additionally, we thank Andrew Gibbs and Simon Chandler-Wilde (U. Reading) for suggesting the numerical experiment in \cref{sec:intGamma}.
We are deeply grateful to anonymous reviewers for their helpful comments and remarks.
We acknowledge the use of ChatGPT to correct occasional grammar and style errors (in particular, in the introduction).

\section*{Funding}
This research is funded by the project RAPID HyBOX\@.

\printbibliography

\appendix

\section{An extension of the Strichartz algorithm for polynomial integration}\label{appendix:gen}
In this section we generalize the method of~\cite{Str2000} for computing integrals of polynomials on self-affine sets, defined by the IFS \( \IFS = \setst{S_\ell}{\ell \in \bbL} \) via \cref{thm:attractor-existence}, with  respect to the invariant measure \( \mu \), defined via \cref{thm:inv_measure}.
Recall the invariance property \cref{pro:invariance-integral}:
\[
	\int_\Gamma f(x) \di{\mu}
	= \sum_{\ell \in \bbL} \mu_\ell \int_\Gamma f \circ S_\ell(x) \di{\mu}
	= \int_\Gamma \OpFct\clr{f}(x) \di{\mu},
	\qquad \forall  f \in \spC^0(\Gamma).
\]
We want to compute the moments \( m_{\vec{\alpha}} \) defined as the integrals of the monomials \( x^{\vec{\alpha}} \).
This is done by induction on the total degree.
Let \( k \geq 1 \), assume that  \( m_{\vec{\beta}} \) is known for \( \abs{\vec{\beta}}_1 < k \). For \( \abs{\vec{\alpha}}_1 = k \), to the triangular structure of \( \OpFct \), see \cref{eq:opfct-decomposition}, we have
\[
	m_{\vec{\alpha}}
	= \int_\Gamma x^{\vec{\alpha}} \di{\mu}
	= \int_\Gamma \OpFct\clr{x^{\vec{\alpha}} } \di{\mu}
	= \int_\Gamma \OpFct_{k,k}\clr{x^{\vec{\alpha}} } \di{\mu}
	+ \sum_{k' < k} \int_\Gamma \OpFct_{k,k'}\clr{x^{\vec{\alpha}} } \di{\mu}.
\]
This can be rewritten as a linear system with unknown vector  \( M_k =  \plr{m_{\vec{\alpha}}}_{\abs{\vec{\alpha}} = k} \) of the form  \( \plr*{\matI - F_k} M_k = R_k \) where \( F_k \) is the matrix of the operator \( \OpFct_{k,k} \) on the monomials' basis \( \plr{x^{\vec{\alpha}}}_{\abs{\vec{\alpha}} = k} \) and the right-hand side \( R_k \) is known since  \( \OpFct_{k,k'}\clr{x^{\vec{\alpha}} } \) has total degree \( k'< k \).
\Cref{lem:loc-spectrum} implies that \( 1 \) is not an eigenvalue of \( F_k \) and therefore the matrix \( \matI - F_k \) is invertible, and \( U_k = \plr{\matI - F_k}^{-1} R_k \).\footnote{This method has been implemented, see folder \texttt{symbolic-polynomial-integration} in~\cite{ZM_code}, using \texttt{Python} and \texttt{SymPy}~\cite{SymPy}.}

\end{document}